\newcommand{\remove}[1]{}
\newtheorem*{rep@theorem}{\rep@title}
\newcommand{\newreptheorem}[2]{%
\newenvironment{rep#1}[1]{%
 \def\rep@title{#2 \ref{##1}}%
 \begin{rep@theorem}}%
 {\end{rep@theorem}}}
\newcommand\numberthis{\addtocounter{equation}{1}\tag{\theequation}}
\newtheorem{thm}{Theorem}[section]
\newtheorem{claim}[thm]{Claim}
\newtheorem{lem}[thm]{Lemma}
\newtheorem{define}[thm]{Definition}
\newtheorem{cor}[thm]{Corollary}
\newtheorem{obs}[thm]{Observation}
\newtheorem{stat}[thm]{Statement}
\newtheorem{comm}[thm]{Comment}
\newtheorem{conjecture}[thm]{Conjecture}
\def\GL{\text{GL}}
\def\F{{\mathbb{F}}}
\def\Q{{\mathbb{Q}}}
\def\Z{{\mathbb{Z}}}
\def\N{{\mathbb{N}}}
\def\C{{\mathbb{C}}}
\def\P{{\mathbb{P}}}
\def\cC{{\cal C}}
\def\cU{{\cal U}}
\def\cM{\mathcal{M}}
\def\Ind{{\mathbf{1}}}
\def\balpha{{\bm \alpha}}
\def\bbeta{{\bm\beta}}
\def\_{\,\,\,\,\,}
\def\rank{\textsf{rank}}
\def\crank{\textsf{crank}}
\def\Coeff{\text{Coeff}}
\newcommand{\eps}{\epsilon}
\begin{document}

\begin{frontmatter}[classification=text]

\title{The Kakeya Set conjecture over $\mathbb{Z}/N\mathbb{Z}$ for general $N$} 

\author[md]{Manik Dhar}

\begin{abstract}
We prove the Kakeya set conjecture for $\mathbb{Z}/N\mathbb{Z}$ for general $N$ as stated by Hickman and Wright~\cite{hickman2018fourier}. This entails extending and combining the techniques of Arsovski~\cite{arsovski2021padic} for $N=p^k$ and the author and Dvir~\cite{dhar2021proof} for the case of square-free $N$. We also prove stronger lower bounds for the size of $(m,\epsilon)$-Kakeya sets over $\Z/p^k\Z$ by extending the techniques of \cite{arsovski2021padic} using multiplicities as was done in \cite{saraf2008,dvir2013extensions}. In addition, we show our bounds are almost sharp by providing a new construction for Kakeya sets over $\Z/p^k\Z$ and $\Z/N\Z$.
\end{abstract}
\end{frontmatter}

\section{Introduction}
We are interested in proving lower bounds for the sizes of sets in $(\Z/N\Z)^n$ which have large intersections with lines in many directions. We first define the set of possible directions a line can take in $(\Z/N\Z)^n$.

\begin{define}[Projective space $\P (\Z/N\Z)^{n-1}$]
Let $N=p_1^{k_1}\hdots p_r^{k_r}$ where $p_1,\hdots,p_r$ are distinct primes. The Projective space $\P (\Z/N\Z)^{n-1}$ consists of vectors $u\in (\Z/N\Z)^n$ up to unit multiples of each other such that {\em $u$ (mod $p_i^{k_i}$)} has at least one unit co-ordinate for every $i=1,\hdots,r$.
\end{define}

For each direction in $\P (\Z/N\Z)^{n-1}$ we pick a representative in $(\Z/N\Z)^n$. This allows us to treat $\P (\Z/N\Z)^{n-1}$ as a subset of $(\Z/N\Z)^n$.

\begin{define}[$m$-rich lines]
Let $N,n\in \N$. For a subset $S\subseteq (\Z/N\Z)^n$, we say a line $L\subseteq (\Z/N\Z)^n$ is $m$-rich with respect to $S$ if $|S\cap L|\ge m$.
\end{define}

We now define sets that have large intersections with lines in many directions.

\begin{define}[$(m,\epsilon)$-Kakeya Sets]
Let $n,N\in \N$. A set $S\subseteq (\Z/N\Z)^n$ is said to be $(m,\epsilon)$-Kakeya if for at least an $\epsilon$ fraction of directions $u\in \P (\Z/N\Z)^{n-1}$ there exists a line $L_u=\{a+\lambda u| \lambda \in \Z/N\Z\}$ in the direction $u$ which is $m$-rich with respect to $S$.

An $(N,1)$-Kakeya set in $(\Z/N\Z)^n$ is simply called a Kakeya set. In other words, a Kakeya set is a set that contains a line in every direction. 
\end{define}

In this paper, we resolve the following conjecture of Hickman and Wright~\cite{hickman2018fourier}.

\begin{conjecture}[Kakeya set conjecture over $\Z/N\Z$]\label{conj}
For all $\eps > 0$ and $n\in \N$, there exists a constant $C_{n,\eps}$ such that any Kakeya set $S \subset (\Z/N\Z)^n$ satisfies 
$$ |S| \geq C_{n,\epsilon} N^{n-\eps}.$$
\end{conjecture}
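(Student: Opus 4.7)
My plan is to combine the two existing techniques via induction on $r$, the number of distinct prime factors of $N=p_1^{k_1}\cdots p_r^{k_r}$. The strengthened $(m,\epsilon)$-Kakeya bound over $(\mathbb{Z}/p^k\mathbb{Z})^n$ promised in the abstract is both the base case and the device that glues the inductive step together. The base case $r=1$ comes from upgrading Arsovski's prime-power Kakeya bound to an $(m,\epsilon)$-Kakeya statement by inserting multiplicities into his $p$-adic argument, along the lines of the multiplicity method of Saraf--Sudan and Dvir--Kopparty--Saraf--Sudan.

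For the inductive step, write $N = p^k M$ with $\gcd(p,M)=1$ and $M$ of smaller $r$. By CRT, $(\mathbb{Z}/N\mathbb{Z})^n \cong (\mathbb{Z}/p^k\mathbb{Z})^n \times (\mathbb{Z}/M\mathbb{Z})^n$, and since the unit-coordinate condition in the definition of $\mathbb{P}(\mathbb{Z}/N\mathbb{Z})^{n-1}$ splits componentwise, we also get $\mathbb{P}(\mathbb{Z}/N\mathbb{Z})^{n-1} \cong \mathbb{P}(\mathbb{Z}/p^k\mathbb{Z})^{n-1}\times \mathbb{P}(\mathbb{Z}/M\mathbb{Z})^{n-1}$. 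The CRT bijection on the scalar parameter $\lambda\mapsto(\lambda\bmod p^k,\lambda\bmod M)$ decouples the line parameter, so the line $L_{(u',u'')}\subset (\mathbb{Z}/N\mathbb{Z})^n$ corresponds to the \emph{product rectangle} $\bar L_{u'}\times\bar L_{u''}$. Hence a Kakeya set $S$ contains a combinatorial rectangle in every pair of directions. Fix the projection $\pi_M\colon S \to (\mathbb{Z}/M\mathbb{Z})^n$; for each $u'\in \mathbb{P}(\mathbb{Z}/p^k\mathbb{Z})^{n-1}$, the set $Y_{u'}\subset (\mathbb{Z}/M\mathbb{Z})^n$ of fibers $y$ for which $S_y:=\pi_M^{-1}(y)\cap S$ contains a line in direction $u'$ itself contains $\bigcup_{u''} \bar L_{u''}$, a Kakeya set in $(\mathbb{Z}/M\mathbb{Z})^n$. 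The inductive hypothesis gives $|Y_{u'}|\geq C_{n,\epsilon}M^{n-\epsilon}$. Double-counting pairs $(y,u')$ and pigeonholing then shows that a noticeable density of fibers $y$ admit $S_y$ as a $(p^k,\delta)$-Kakeya set in $(\mathbb{Z}/p^k\mathbb{Z})^n$ with $\delta$ suitably bounded below; applying the base-case bound to each such fiber and summing over $y$ delivers $|S|\geq C_{n,\epsilon} N^{n-\epsilon}$.

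The main obstacle will be the quantitative bookkeeping across the induction: one must track how the Kakeya parameters $(m,\epsilon,\delta)$ propagate as prime factors are peeled off one at a time, and ensure the implicit constants do not degrade with $r$ (which can grow like $\log N$). The multiplicity-enhanced base case is indispensable here because the density $\delta$ produced in the pigeonholing step is only on the order of $M^{-\epsilon}$, not $1$; without the $(m,\epsilon)$-strengthening, naive applications of a pure Kakeya bound per fiber would compound $\epsilon$-losses geometrically across the $r$ primes, exceeding the allowed $\epsilon$-loss in the conjecture. Coordinating the choice of $\epsilon$'s between the inductive hypothesis and the base case, so that the final exponent remains $n-\epsilon$ for the prescribed $\epsilon$, is the delicate quantitative step at the heart of the argument.
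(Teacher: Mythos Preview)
Your outline is correct and takes a genuinely different route from the paper. The paper does \emph{not} use the $(m,\epsilon)$-Kakeya bound over $\Z/p^k\Z$ as a black box in the induction; instead it re-runs the Arsovski rank machinery inside the inductive step, tensoring the evaluation matrices $\cU_{L_0(u_0,u_1)}$ against indicator matrices $Y_{u_0,u_1}$ for the $\Z/M\Z$-part and invoking the crank-of-tensor-products lemma (Lemma~\ref{lem:crankTensorProduct}) together with Lemmas~\ref{lem:crankMatMult} and \ref{lem:quoRank}. Your combinatorial fibre argument bypasses all of Section~\ref{sec:PreDD}: once Theorem~\ref{thm:Smain} is proven, the inductive step is pure double counting. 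What the paper's approach buys is uniformity (the same rank framework handles the base case and the glue); what yours buys is modularity and elementarity.

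However, your ``main obstacle'' is a phantom. The bound of Theorem~\ref{thm:Smain} is \emph{linear} in $\epsilon$, so no pigeonholing is needed and no losses compound. Concretely, set $\delta_y=|\{u'\in\P(\Z/p^k\Z)^{n-1}:S_y\text{ contains a line in direction }u'\}|/|\P(\Z/p^k\Z)^{n-1}|$; then Theorem~\ref{thm:Smain} gives $|S_y|\ge \delta_y\, p^{kn}(2(k+\lceil\log_p n\rceil))^{-n}$ for \emph{every} $y$, and summing yields
\[
|S|\;\ge\; \frac{p^{kn}}{(2(k+\lceil\log_p n\rceil))^{n}}\sum_y \delta_y
\;=\; \frac{p^{kn}}{(2(k+\lceil\log_p n\rceil))^{n}}\cdot\frac{1}{|\P(\Z/p^k\Z)^{n-1}|}\sum_{u'}|Y_{u'}|
\;\ge\; \frac{p^{kn}}{(2(k+\lceil\log_p n\rceil))^{n}}\cdot C_M,
\]
where $C_M$ is the inductive bound for $(\Z/M\Z)^n$. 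This reproduces the explicit product formula of Theorem~\ref{main} \emph{exactly}, with no $\epsilon$-loss at all; the conjecture then follows from Observation~\ref{obs:no}. The right statement to induct on is therefore Theorem~\ref{main}, not Conjecture~\ref{conj} itself---inducting on the $N^{n-\epsilon}$ form, as your phrasing ``$|Y_{u'}|\ge C_{n,\epsilon}M^{n-\epsilon}$'' suggests, forces you to verify $p^{k\epsilon}\ge (2(k+\lceil\log_p n\rceil))^n$ at every step, which fails for small $p^k$ and is precisely the ``delicate coordination'' you were worried about. Drop the pigeonhole and induct on the product bound; the argument then becomes a clean two-line computation.
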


Wolff in \cite{wolf1999} first posed Conjecture~\ref{conj} with $\Z/N\Z$ replaced by a finite field as a possible problem whose resolution might help in proving the Euclidean Kakeya conjecture. Wolff's conjecture was proven by Dvir in \cite{dvir2009size} with $C_{n}=1/n!$ (Over finite fields of size $N$ the $\epsilon$ dependence in Conjecture~\ref{conj} is not needed. For composite $N$ it is known that the $\epsilon$ dependence is essential~\cite{hickman2018fourier,dhar2021proof}.) Using the method of multiplicities and its extensions \cite{saraf2008,dvir2013extensions,bukh2021sharp} the constant was improved to $C_{n}=2^{-n+1}$, which is known to be tight. 

Ellenberg, Oberlin, and Tao in \cite{ellenberg2010kakeya} proposed studying the size of Kakeya sets over the rings $\Z/p^k\Z$ and $\F_q[x]/\langle x^k\rangle$. They were motivated by the fact that these rings have `scales' and hence are closer to the Euclidean version of the problem. Hickman and Wright posed Conjecture~\ref{conj} for $\Z/N\Z$ with arbitrary $N$ and considered connections between the problem and the Kakeya conjecture over the $p$-adics in \cite{hickman2018fourier}. Indeed, resolving Conjecture~\ref{conj} for the rings $\Z/p^k\Z$ and $\F_q[x]/\langle x^k\rangle$ resolves the Minkowski dimension Kakeya set conjecture for the $p$-adic integers and the power series ring $\F_q[[x]]$ respectively~\cite{ellenberg2010kakeya,dummithablicsek2013,hickman2018fourier}. The Kakeya problem over these rings is interesting as, similarly to the Euclidean case, one can construct Kakeya sets of Haar measure $0$ for the $p$-adic integers and the power series ring $\F_q[[x]]$. These constructions and generalizations to other settings can be found in \cite{dummithablicsek2013,Fraser_2016,CML_2018__10_1_3_0,hickman2018fourier}.

For $n=2$ and $N=p^k$, Conjecture~\ref{conj} was resolved by Dummit and Hablicsek in \cite{dummithablicsek2013} by proving that sizes of Kakeya sets are lower bounded by $p^{2k}/2k$. The author and Dvir~\cite{dhar2021proof} resolved Conjecture~\ref{conj} for the case of $N$ square-free by proving the following bound, which implies Conjecture~\ref{conj} using Observation~\ref{obs:no} at the end of this section.

\begin{thm}[Kakeya set bounds for square-free $N$ {\cite[Theorem~1.3]{dhar2021proof}}]\label{squarefr}
Let $N\in \N$ where $N=p_1\hdots p_r$ for distinct primes $p_1,\hdots,p_r$. Any Kakeya set $S$ in $(\Z/N\Z)^n$ for $n\in \N$ satisfies,
$$|S|\ge 2^{-rn}N^n.$$
\end{thm}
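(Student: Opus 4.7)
The plan is to prove by strong induction on the number of prime factors $r$ the stronger statement: for any square-free $N = p_1 \cdots p_r$ and any $(N,\epsilon)$-Kakeya set $S \subseteq (\Z/N\Z)^n$, $|S| \geq \epsilon \cdot 2^{-rn} N^n$; specializing to $\epsilon = 1$ recovers the theorem. The central structural observation I would exploit is that the CRT isomorphism $(\Z/N\Z)^n \cong \prod_{i=1}^r \F_{p_i}^n$ turns every line into a \emph{product of lines}: the line $\{x_0 + \lambda u : \lambda \in \Z/N\Z\}$ corresponds under CRT to $L_1 \times \cdots \times L_r$, where $L_i \subseteq \F_{p_i}^n$ is the line through $\pi_i(x_0)$ in direction $\pi_i(u)$, because $\lambda \in \Z/N\Z$ decomposes into an independently varying tuple $(\lambda_1,\ldots,\lambda_r) \in \prod_i \F_{p_i}$.

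The base case $r = 1$ is the $(p,\epsilon)$-Kakeya bound over $\F_p^n$, $|T| \geq \epsilon \cdot 2^{-n} p^n$, obtained via the polynomial method with higher-order multiplicities (Saraf--Sudan, Dvir--Kopparty--Saraf--Sudan, Bukh--Chao). For the inductive step, write $N = p_1 M$ with $M = p_2 \cdots p_r$ and view $S \subseteq \F_{p_1}^n \times (\Z/M\Z)^n$. For each $a \in \F_{p_1}^n$ let $S_a := \{b : (a,b) \in S\}$, so $|S| = \sum_a |S_a|$. For each $u_2 \in \P(\Z/M\Z)^{n-1}$ define $T_{u_2} := \{a : S_a \text{ contains a line in direction } u_2\}$ and let $\epsilon_{u_2}$ be the fraction of $u_1 \in \P(\F_{p_1})^{n-1}$ such that the pair $(u_1,u_2)$ is a covered direction of $S$. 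The product-line structure ensures that whenever $(u_1,u_2)$ is covered by some product line $L_1 \times L_2 \subseteq S$, every $a \in L_1$ lies in $T_{u_2}$; thus $T_{u_2}$ is a $(p_1,\epsilon_{u_2})$-Kakeya set in $\F_{p_1}^n$, and the base case gives $|T_{u_2}| \geq \epsilon_{u_2} \cdot 2^{-n} p_1^n$.

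Setting $\epsilon_a := |\{u_2 : a \in T_{u_2}\}| / |\P(\Z/M\Z)^{n-1}|$, the slice $S_a$ is by definition an $(M,\epsilon_a)$-Kakeya set, and the inductive hypothesis yields $|S_a| \geq \epsilon_a \cdot 2^{-(r-1)n} M^n$. Double-counting the covered pairs gives
\[
\sum_a \epsilon_a \cdot |\P(\Z/M\Z)^{n-1}| \;=\; \sum_{u_2} |T_{u_2}| \;\geq\; \epsilon \cdot 2^{-n} p_1^n \cdot |\P(\Z/M\Z)^{n-1}|,
\]
hence $\sum_a \epsilon_a \geq \epsilon \cdot 2^{-n} p_1^n$. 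Combining, $|S| = \sum_a |S_a| \geq 2^{-(r-1)n} M^n \sum_a \epsilon_a \geq \epsilon \cdot 2^{-rn} N^n$, which closes the induction. The main obstacle is securing the linear-in-$\epsilon$ base bound with the sharp $2^{-n}$ constant: the plain Dvir polynomial method gives only $\binom{p+n-1}{n} \sim p^n/n!$, so one must run multiplicities of growing order and carefully track how an $\epsilon$-fraction of covered directions feeds into the polynomial constraints. A second delicate point, which is precisely why this method is restricted to square-free $N$, is that the product-of-lines picture crucially uses that the CRT components $\F_{p_i}$ are fields; for prime-power components $\Z/p^k\Z$ the $\lambda$-decomposition breaks down and a fundamentally different technique ($p$-adic, à la Arsovski) is required and later combined with this argument to handle general $N$.
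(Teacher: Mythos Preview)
Your argument is correct, and it takes a genuinely different route. This paper does not itself prove Theorem~\ref{squarefr} (it is quoted from \cite{dhar2021proof}), but the method there---visible in Section~\ref{sec:PreDD} and in the proof of Theorem~\ref{main} which generalizes it---is linear-algebraic: one bounds $|S|$ from below by the $\F_p$-$\crank$ of a family of tensor products of evaluation matrices and invokes Lemma~\ref{lem:crankTensor} together with the rank of $M_{p,n}$ in each prime coordinate. Your approach shares the outer induction on $r$ and the CRT product-of-lines structure (Fact~\ref{fact:remainder}), but replaces the entire rank machinery by an elementary slice-and-double-count: bound each $|T_{u_2}|$ via the $\F_p$ base case, bound each $|S_a|$ via the inductive hypothesis, and sum. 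This is shorter, and it even yields the $(N,\epsilon)$-version linearly in $\epsilon$, which the $\crank$ argument as written here does not deliver for composite $N$. In fact, feeding Theorem~\ref{thm:Smain} in as the base case, your induction furnishes an alternative proof of Theorem~\ref{main} with the same constants. What the rank-based route buys is the reduction to incidence-matrix ranks that motivated Arsovski's prime-power argument in the first place; once that base case is available, your combinatorial assembly is the simpler way to glue the primes together. One correction to your closing remark: the product-of-lines picture does \emph{not} fail for prime-power CRT components (Fact~\ref{fact:remainder} holds for any coprime factorization); the only obstruction to general $N$ was the absence, prior to \cite{arsovski2021padic}, of an adequate $(p^k,\epsilon)$ base bound over a single $\Z/p^k\Z$.
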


In \cite{dhar2021proof} a reduction was also proven which lower bounds the size of Kakeya sets in $(\Z/p^k\Z)^n$ by the $\F_p$-rank of the point-hyperplane incidence matrix of $(\Z/p^k\Z)^n$. Building on the ideas behind this reduction, Arsovski proved Conjecture~\ref{conj} for $N=p^k$ for general $n$.

\begin{thm}[Kakeya Set bounds over $\Z/p^k\Z$,{\cite[Theorem~2]{arsovski2021padic}}]\label{power}
For $p$ prime and $k,n\in \N$, every Kakeya set $S$ in $(\Z/p^k\Z)^n$ satisfies,
$$|S|\ge (kn)^{-n} p^{kn}.$$
\end{thm}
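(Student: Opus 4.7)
The plan is to follow the two-step strategy hinted at in the excerpt: first reduce the size of a Kakeya set in $(\Z/p^k\Z)^n$ to an $\F_p$-rank question via the method of \cite{dhar2021proof}, then estimate that rank by a polynomial-method argument tailored to $\Z/p^k\Z$.

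For the reduction, I would invoke the key statement of \cite{dhar2021proof}: any Kakeya set $S\subseteq (\Z/p^k\Z)^n$ satisfies $|S|\geq \mathrm{rank}_{\F_p}(M)$ for a suitable point--hyperplane incidence matrix $M$ of $(\Z/p^k\Z)^n$. The Kakeya property supplies a full line through every direction, which yields an injection from an $\F_p$-space of functions on the direction set into the space of $\F_p$-valued functions supported on $S$. After this step the problem becomes purely linear-algebraic over $\F_p$, and the Kakeya hypothesis is baked into the matrix.

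To estimate $\mathrm{rank}_{\F_p}(M)$ from below, the natural idea is to identify $\F_p$-valued functions on $(\Z/p^k\Z)^n$ with polynomials in $kn$ digit-variables via the bijection $(\Z/p^k\Z)^n \leftrightarrow \F_p^{kn}$, $x=\sum_{i=0}^{k-1}x_ip^i \leftrightarrow (x_0,\dots,x_{k-1})$. The space of polynomials of total digit-degree at most $d$ in these $kn$ variables has dimension $\binom{d+kn}{kn}$; choosing $d\sim k(p-1)$ makes this of order $p^{kn}/(kn)^n$, and it suffices to exhibit that many independent rows of $M$ indexed by low-degree polynomials. Independence would follow if a nonzero polynomial $f$ of digit-degree at most $d$ cannot vanish on every line $L_u\subseteq S$. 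Parameterizing $L_u=\{a+\lambda u:\lambda\in\Z/p^k\Z\}$ by the base-$p$ digits $\lambda_0,\dots,\lambda_{k-1}$ of $\lambda$, each coordinate digit of $a+\lambda u$ is a polynomial in $\lambda_0,\dots,\lambda_i$ whose degree is governed by carry propagation in base-$p$ addition. If this carry-degree is $O(k)$, then $f$ restricts to a polynomial of degree $O(dk)$ in the $\lambda_j$'s, and for $dk<p^k$ this forces $f\equiv 0$ on the line and then globally, yielding the constant $(kn)^{-n}$.

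The main obstacle is precisely this carry analysis. Over a finite field, lines parameterize linearly and a degree-$d$ polynomial restricts to a line as a univariate polynomial of degree $d$, so the analog of step three is trivial. Over $\Z/p^k\Z$, base-$p$ carries introduce nonlinear dependence between the digit coordinates of $a+\lambda u$ and those of $\lambda$, and a priori the induced degree can blow up badly. Showing that the effective degree grows only linearly in $k$, so that the tuning $dk<p^k$ is genuinely available, is the heart of Arsovski's argument and the step where I would spend most of the effort, likely by tracking carries level by level and exploiting the fact that the carry from level $i$ depends polynomially on a small window of previous-level digits.
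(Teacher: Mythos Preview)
Your proposal misidentifies the mechanism of Arsovski's proof, and the route you sketch runs into exactly the obstacle you flag without a way out. The base-$p$ digit approach founders because the carry from position $i$ in $a+\lambda u$ is, as a polynomial over $\F_p$ in the earlier digits, of degree comparable to $p$ (it is essentially an indicator of whether a sum exceeds $p$), and these carries cascade through all $k$ levels. The induced degree on the restriction to a line is therefore not $O(k)$ but can be of order $p^{k}$ or worse, so the tuning $dk<p^k$ is never available. Carry analysis is not ``the heart of Arsovski's argument''; it is precisely what his argument avoids.

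What the paper (following Arsovski) actually does is lift the problem to complex $p^k$-th roots of unity. One replaces a point $x\in(\Z/p^k\Z)^n$ by $\zeta^{x}=(\zeta^{x_1},\dots,\zeta^{x_n})$ with $\zeta$ a primitive $p^k$-th root of unity, and considers evaluation vectors of monomials at these points. The crucial gain is that addition in $\Z/p^k\Z$ becomes multiplication of roots of unity, $\zeta^{a+\lambda u}=\zeta^{a}(\zeta^{\lambda})^{u}$, so a line is parameterized by a genuine univariate polynomial curve $y\mapsto y^{u}$ evaluated at the $p^k$ points $y=\zeta^{\lambda}$, with no carries at all. Interpolation from these $p^k$ values recovers the monomial $z^{\langle u,v\rangle}$ in the ring $\Z(\zeta)[z]/\langle h(z)\rangle$ for $h(z)=\prod_\lambda(z-\zeta^\lambda)$; one then applies the ring homomorphism $\psi_{p^k}:\Z(\zeta)\to\F_p$ sending $\zeta\mapsto 1$, under which $h(z)$ collapses to $(z-1)^{p^k}=z^{p^k}-1$. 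This lands in the Vandermonde-type matrix $M_{p^k,n}$ over $\F_p[z]/\langle z^{p^k}-1\rangle$, whose $\F_p$-rank is bounded below by an explicit LU-type factorization (Lemma~\ref{lem:1rank}) yielding the $(kn)^{-n}p^{kn}$ bound. So the reduction is not to the point--hyperplane incidence matrix per se, and the rank estimate is not via digit polynomials; both steps run through the roots-of-unity lift and the quotient $\psi_{p^k}$.
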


As mentioned before this also resolves the Minkowski dimension Kakeya Set conjecture over the $p$-adics. In \cite{arsovskiNew} (which is the arxiv version 2 of the paper~\cite{arsovski2021padic}) a different approach inspired by the polynomial method proofs in \cite{dvir2009size,saraf2008,dvir2013extensions} is used to give the following bound for $(m,\epsilon)$-Kakeya sets.

\begin{thm}[$(m,\epsilon)$-Kakeya Set bounds over $\Z/p^k\Z$,{\cite[Theorem~3]{arsovskiNew}}]\label{power2}
For $p$ prime and $k,n\in \N$, every $(m,\epsilon)$-Kakeya set $S$ in $(\Z/p^k\Z)^n$ satisfies,
$$|S|\ge (Ckn^2p)^{-n}\epsilon^n m^{n},$$
where $C>1$ is some universal constant.
\end{thm}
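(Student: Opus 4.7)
The plan is to extend Arsovski's polynomial method proof from~\cite{arsovskiNew} for Kakeya sets over $\Z/p^k\Z$ to handle $m$-rich lines by incorporating the method of multiplicities, in exactly the way that~\cite{saraf2008, dvir2013extensions} extended Dvir's original finite-field proof~\cite{dvir2009size} to get sharper $(m,\epsilon)$-Kakeya bounds over $\F_q$. The overall skeleton should be: (i) by a dimension count, produce a nonzero polynomial $P$ (in the same function space that Arsovski uses) that vanishes to order $\mu$ at every point of the putative $(m,\epsilon)$-Kakeya set $S$; (ii) on each $m$-rich line $L_u$, $P|_{L_u}$ is a univariate polynomial with at least $m$ zeros of multiplicity $\mu$, so if the chosen degree $d$ satisfies $d < \mu m$ then $P$ vanishes identically on $L_u$; (iii) feed this into Arsovski's structural/propagation lemma, adapted to multiplicities, which forces $P$ to be zero in the ambient space once $P$ vanishes on an $\epsilon$ fraction of directions' lines, contradicting the dimension count whenever $|S|$ is too small.

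Concretely, I would begin by recalling the precise polynomial space used in~\cite{arsovskiNew} (polynomials of bounded degree over $\F_p$, evaluated on lifts of the points of $(\Z/p^k\Z)^n$ to $\Z_p^n$) and rewriting all of Arsovski's intermediate lemmas in terms of vanishing with multiplicity $\mu$, defined via Hasse derivatives on the lift. A nonzero $P$ of degree $\le d$ exists provided the standard multiplicity-method inequality $\binom{d+n}{n} > |S|\binom{\mu+n-1}{n}$ holds. Step (ii) then goes through essentially as in the classical proof, since once restricted to a line the polynomial is univariate and the zero-divisor issues of $\Z/p^k\Z$ disappear. Step (iii) requires the most care: Arsovski's argument that vanishing on enough lines forces $P$ to be trivial must be strengthened to exploit the extra multiplicity $\mu$, and this is where the $\epsilon^n$ and $m^n$ factors will appear. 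A final optimization of $d$ and $\mu$, balancing the parameter count and the line-vanishing inequalities, should yield the stated bound $|S|\ge (Ckn^2 p)^{-n}\epsilon^n m^n$.

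The main obstacle will be adapting Arsovski's propagation step to the multiplicity setting without losing more than a constant factor in the parameters. Over a field the Hasse derivative framework is clean and sharp bounds follow easily, but over $\Z/p^k\Z$ one must track how derivatives behave under the lift to $\Z_p$, how they interact with reduction mod $p^k$, and how the ``scale'' structure of the ring influences the dimension counts. The extra factor $kn^2 p$ in the final bound almost certainly emerges from the interplay of the $k$ nested scales, the $n$ variables, and the prime $p$, and extracting it cleanly -- rather than a worse factor such as $(kp)^{n^2}$ -- is the essential technical work. I would also need to verify that an $\epsilon$ fraction of good directions suffices in the propagation step; over a finite field this typically follows from a Schwartz-Zippel style argument, but here it must be replaced by the $\Z/p^k\Z$ analogue developed by Arsovski.
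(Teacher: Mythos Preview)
This theorem is not proved in the present paper; it is quoted from \cite{arsovskiNew}, and the paper explicitly contrasts it with its own sharper Theorem~\ref{thm:Smain}. Your sketch is a plausible description of how \cite{arsovskiNew} proceeds, so as a proof of \emph{this} statement there is no evident gap.

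What the paper itself does for $(m,\epsilon)$-Kakeya sets over $\Z/p^k\Z$ is Theorem~\ref{thm:Smain}, and the route is genuinely different from the vanishing-polynomial-with-multiplicity argument you outline. Instead of producing a nonzero $P$ that vanishes to order $\mu$ on $S$ and then propagating, the paper works dually with ranks of explicit matrices: the Hasse-derivative evaluation vectors $U_{p^\ell}^{(\balpha)}(\zeta^x)$ for $x\in S$ and $\text{wt}(\balpha)<b$ span a space of dimension at most $|S|\binom{b+n-1}{n}$, and a decoding lemma (Lemma~\ref{lem:deRich}/Corollary~\ref{lem:decoding}) shows that from these one can linearly recover, after the reduction $\psi_{p^k}$, the rows $M_{p^\ell,n}(u')$ of an explicit Vandermonde-type matrix over $\F_p[z]/\langle z^{p^\ell}-1\rangle$ whose $\F_p$-rank is bounded below by Lemma~\ref{lem:1rank}. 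The $\epsilon$-fraction of good directions is handled by a random rotation (Lemma~\ref{fm:rdRo}), not a Schwartz--Zippel-type step. This buys two things over the approach you describe: no $p$-adic analysis is needed, and the constants are tighter --- the $(kn^2p)^{-n}\epsilon^n$ in the cited bound becomes $(2(k+\lceil\log_p n\rceil))^{-n}\epsilon$.
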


As can be seen from the bounds in the Theorems \ref{power} and \ref{power2}, the analysis in \cite{arsovskiNew} is looser than the analysis in \cite{arsovski2021padic} for the case of $(p^k,1)$-Kakeya sets and leads to worse bounds in that setting.

In \cite{arsovski2021padic,arsovskiNew} tools from $p$-adic analysis are used to develop the techniques which prove Theorems \ref{power} and \ref{power2}. Both the proofs can be thought of as trying to develop a polynomial method on roots of unity in $\C$. Our first result extends the ideas in \cite{arsovski2021padic} using multiplicities as in \cite{saraf2008,dvir2013extensions} to bound the size of $(m,\epsilon)$-Kakeya sets over $\Z/p^k\Z$ with improved constants. Another advantage of our proof is that it is more elementary and does not require tools from $p$-adic analysis.

\begin{thm}[Stronger $(m,\epsilon)$-Kakeya Set bounds over $\Z/p^k\Z$\footnote{We also note that the techniques presented here can also be used to prove norm bounds for functions $f:(\Z/N\Z)^n\rightarrow \C$ which have rich lines in many directions as was done for $N$ prime (and in general for finite fields) in Theorem 19 of \cite{dhar2019simple}. A line $L$ is $m$-rich with respect to $f$ in this setting if $\sum_{x\in L} |f(x)|\ge m$.  }]\label{thm:Smain}
Let $k,n,p\in \N$ with $p$ prime. Any $(m,\epsilon)$-Kakeya set $S\subseteq (\Z/p^k\Z)^n$ satisfies the following bound,
$$|S| \ge \epsilon\cdot\left(\frac{m^n}{(2(k+\lceil\log_p(n)\rceil))^n}\right).$$
When $p>n$, we also get the following stronger bound for $(m,\epsilon)$-Kakeya set $S$ in $(\Z/p^k\Z)^n$,
$$|S|\ge \epsilon\cdot\left(\frac{m^n}{(k+1)^n}\right) (1+n/p)^{-n}.$$
\end{thm}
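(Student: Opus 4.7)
The plan is to extend Arsovski's proof of Theorem~\ref{power} (from \cite{arsovski2021padic}) by layering on the method of multiplicities as used in \cite{saraf2008,dvir2013extensions}. Arsovski bounds $|S|$ for a Kakeya set $S \subseteq (\Z/p^k\Z)^n$ via an analytic identity relating the point--hyperplane incidence operator to character sums valued in complex $p$-power roots of unity: the $p^k$-richness of a line $L$ forces a certain character sum supported on $S \cap L$ to vanish, and combining these vanishings across all directions yields a rank-type lower bound. The key new ingredient is that the same identity survives if every point of $S$ is weighted by an order-$s$ Hasse-derivative condition, at a cost to the effective richness controlled by the $p$-adic valuations of the multinomial coefficients in the Taylor expansion.

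First, I would reuse Arsovski's test-function space and define an ``order-$s$ vanishing'' condition on $S$ inside it, expressed as divisibility of the associated character sum by a high power of an appropriate $(1-\zeta)$. The main technical step is then a richness-to-multiplicity lemma: if a bounded-weight test function $f$ vanishes to order $s$ on $S$ and $L$ is an $m$-rich line in direction $u$, then the restriction of $f$ to $L$ vanishes to order at least $\lceil ms/(k+\lceil \log_p n \rceil)\rceil$. Here $k$ reflects the depth of the ring, while $\lceil \log_p n \rceil$ absorbs $v_p(n!)$ via Legendre's formula $v_p(n!) \le (n-1)/(p-1)$, which enters when Hasse-differentiating. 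When $p>n$ the factorials become units modulo $p$, the $\log_p n$ summand disappears, leaving $k+1$; the sharper $(1+n/p)^{-n}$ factor then emerges from a tight count of which multinomial coefficients survive in this regime.

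The proof then follows Arsovski's outline. Requiring order-$s$ vanishing on $S$ cuts the test-function space by at most $|S|\binom{n+s-1}{n}$ dimensions, while the $\epsilon$ fraction of directions containing $m$-rich lines supplies, via the lemma, enough vanishing constraints to force $f=0$ unless $|S|$ meets the claimed bound. Balancing $s \approx m/(k+\lceil\log_p n\rceil)$ produces the $m^n/(2(k+\lceil\log_p n\rceil))^n$ shape, with the factor of $2$ absorbed into standard dimension estimates for the multiplicity-graded test-function space. The $\epsilon$ enters linearly, rather than as $\epsilon^n$, because only the number of rich directions (not a product over coordinates) appears in Arsovski's identity.

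The main obstacle is the richness-to-multiplicity lemma. Over a field this is immediate from univariate root counting after restricting a polynomial to a line; over $\Z/p^k\Z$ no such factorization is available, so the argument must stay entirely inside the character/$p$-adic framework. The delicate point is to show that the $p$-adic valuations of the coefficients arising from Hasse differentiation and restriction to a line combine to yield a linear loss of order $s/(k+\lceil\log_p n\rceil)$, rather than a much weaker bound like $s/(ks)$, and to then refine this valuation count in the $p>n$ regime tightly enough to extract the $(1+n/p)^{-n}$ correction.
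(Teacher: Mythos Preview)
Your high-level plan---graft multiplicities onto Arsovski's framework---matches the paper's, but the concrete mechanism you propose is not the one the paper uses, and the ``richness-to-multiplicity lemma'' you isolate as the crux is not the right target. The paper does not set up a test-function space with an order-$s$ vanishing condition on $S$ and then argue that restriction to an $m$-rich line preserves a fraction of the multiplicity. It runs a rank argument: for each $x\in S$ and each $\balpha$ with $\text{wt}(\balpha)<b$ it writes down the row vector $U_{p^\ell}^{(\balpha)}(\zeta^x)$ of Hasse-derivative evaluations of monomials at $\zeta^x$, giving at most $|S|\binom{b+n-1}{n}$ rows. The key technical step (Lemma~\ref{lem:deRich}/Corollary~\ref{lem:decoding}) is an \emph{interpolation} statement: whenever $\sum_{x\in L}\pi(x)\ge p^\ell$, a $\Q(\zeta)[z]$-linear combination of these rows along $L$, pushed through the quotient $\psi_{p^k}$, recovers the row $M_{p^\ell,n}(u')$. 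The proof is just univariate Hermite interpolation plus the single Frobenius identity
\[
\psi_{p^k}\Bigl(\prod_{\lambda}(z-\zeta^\lambda)^{\pi(a+\lambda u)}\Bigr)=(z-1)^{p^\ell}=z^{p^\ell}-1\in\F_p[z].
\]
No $p$-adic valuation estimate on multinomial or Hasse-derivative coefficients is needed. Combining this with the rank bound on $M_{p^\ell,n}$ (Lemma~\ref{lem:1rank}) and a random-rotation argument for the $\epsilon$ factor (Lemma~\ref{fm:rdRo}) finishes.

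In particular, your explanation of the $\lceil\log_p n\rceil$ term is off: it does not come from $v_p(n!)$ or Legendre's formula, and the refinement for $p>n$ has nothing to do with factorials becoming units. It is purely a parameter choice: one takes $\ell=k+\lceil\log_p n\rceil$ and $b=p^{\ell-k}$ so that $b\ge n$, which in the final inequality $|S|\binom{b+n-1}{n}\ge\epsilon\binom{p^\ell\ell^{-1}+n}{n}$ forces each factor $(1+ip^{-\lceil\log_p n\rceil})$ to be at most $2$; when $p>n$ one simply has $\lceil\log_p n\rceil=1$, giving $\ell=k+1$ and the sharper bound $(1+n/p)^{-n}$. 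So your proposed ``linear loss $s/(k+\lceil\log_p n\rceil)$ in the valuations'' is chasing a mechanism that is not present, and without the interpolation-plus-Frobenius step above it is unclear how your lemma would even be formulated, let alone proved, in this setting.
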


We note that for fields of prime size, the bound above recovers the result of \cite{dvir2013extensions} as $p$ tends to infinity. This gives us a new proof for the result with new techniques. 

Combining techniques developed for $\Z/p^k\Z$ with techniques from \cite{dhar2021proof}, we prove the following lower bound for sizes of Kakeya sets in $\Z/N\Z$ for general $N$. 

\begin{thm}[Kakeya set bounds for $\Z/N\Z$]\label{main}
Let $n\in \N$ and $R=\Z/N\Z$  where $N=p_1^{k_1}\hdots p_r^{k_r}$ with distinct primes $p_1,\hdots,p_r$ and $k_1,\hdots,k_r\in \N$. Any Kakeya set $S$ in $R^n$ satisfies,
$$|S|\ge \left(\prod\limits_{i=1}^r (2(k_i+\lceil \log_{p_i}(n)\rceil))^{-n}\right)\cdot N^n.$$

When $p_1,\hdots,p_r\ge n$, we also get the following stronger lower bound for the size of a Kakeya set $S$ in $(\Z/N\Z)^n,N=p_1^{k_1}\hdots p_r^{k_r}$,
$$|S|\ge N^n\prod\limits_{i=1}^r(k_i+1)^{-n}(1+n/p_i)^{-n}.$$
\end{thm}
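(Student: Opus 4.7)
The plan is to proceed by induction on the number of distinct prime factors $r$ of $N$, with base case $r=1$ being exactly Theorem~\ref{thm:Smain} applied with $m=p^{k_1}$ and $\epsilon=1$. For the inductive step I split off one prime: write $N = p^k N'$ with $\gcd(p,N')=1$ (so $N'$ has $r-1$ distinct prime factors), apply Theorem~\ref{thm:Smain} in the $p$-primary factor, and invoke the induction hypothesis on an auxiliary Kakeya set living in $(\Z/N'\Z)^n$. The Chinese Remainder Theorem gives $(\Z/N\Z)^n \cong (\Z/p^k\Z)^n \times (\Z/N'\Z)^n$, and the crucial observation is that $\P(\Z/N\Z)^{n-1}$ factors correspondingly as $\P(\Z/p^k\Z)^{n-1} \times \P(\Z/N'\Z)^{n-1}$ (since the unit-coordinate condition must hold modulo each prime power separately), with a line in direction $u \leftrightarrow (u_1,u_2)$ being literally the product $L_1 \times L_2$ of lines in the two factors, because the scalar $\lambda \in \Z/N\Z$ splits as $(\lambda_1,\lambda_2) \in \Z/p^k\Z \times \Z/N'\Z$ under CRT.

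For each $\mathbf{y} \in (\Z/N'\Z)^n$ I define the slice $S_\mathbf{y} = \{x \in (\Z/p^k\Z)^n : (x,\mathbf{y}) \in S\}$ and the witness set $D_\mathbf{y} = \{u_1 \in \P(\Z/p^k\Z)^{n-1} : S_\mathbf{y} \text{ contains a full line in direction } u_1\}$. Setting $\epsilon_\mathbf{y} = |D_\mathbf{y}|/|\P(\Z/p^k\Z)^{n-1}|$, the slice $S_\mathbf{y}$ is by construction a $(p^k,\epsilon_\mathbf{y})$-Kakeya set in $(\Z/p^k\Z)^n$, so Theorem~\ref{thm:Smain} supplies $|S_\mathbf{y}| \ge \epsilon_\mathbf{y}\, p^{kn}/(2(k+\lceil \log_p n\rceil))^n$. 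Summing gives $|S| = \sum_\mathbf{y} |S_\mathbf{y}|$, and reduces the task to lower bounding $\sum_\mathbf{y} |D_\mathbf{y}| = \sum_{u_1} |T_{u_1}|$, where $T_{u_1} := \{\mathbf{y} : u_1 \in D_\mathbf{y}\} \subseteq (\Z/N'\Z)^n$.

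The key lemma is that each $T_{u_1}$ is itself a Kakeya set in $(\Z/N'\Z)^n$: given any $u_2 \in \P(\Z/N'\Z)^{n-1}$, applying the Kakeya property of $S$ to the composite direction $(u_1,u_2)$ produces a line $L_1 \times L_2 \subseteq S$, and the product structure forces $L_2 \subseteq T_{u_1}$. The induction hypothesis then yields $|T_{u_1}| \ge N'^n \prod_{i \ge 2}(2(k_i+\lceil \log_{p_i} n\rceil))^{-n}$, and chaining the inequalities produces exactly $|S| \ge N^n \prod_i (2(k_i+\lceil \log_{p_i} n\rceil))^{-n}$. The stronger bound when all $p_i \ge n$ follows by using the second estimate in Theorem~\ref{thm:Smain} at every slicing step, replacing each factor $(2(k_i+\lceil \log_{p_i} n\rceil))^n$ by $(k_i+1)^n(1+n/p_i)^n$. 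The main point requiring care is the CRT product-of-lines identity together with the key lemma that depends on it; once these are in place, the double counting $\sum_\mathbf{y}|D_\mathbf{y}| = \sum_{u_1}|T_{u_1}|$ and the induction are routine bookkeeping.
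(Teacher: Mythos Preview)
Your argument is correct and is genuinely different from the paper's proof. The paper also inducts on $r$ with base case Theorem~\ref{thm:Smain}, but for the inductive step it reopens the entire rank machinery: it tensors the evaluation vectors $U_{p_0^\ell}^{(\balpha)}(\zeta_0^x)$ with indicator vectors $\Ind_y$ over the $R_1=(\Z/N_1\Z)^n$ factor, applies the decoding Corollary~\ref{lem:decoding} inside the tensor, and then invokes the crank-of-tensor-products Lemma~\ref{lem:crankTensorProduct} together with the rank bound for $M_{p_0^\ell,n}$ to lower bound $|S|$. You instead use Theorem~\ref{thm:Smain} purely as a black box: slice $S$ over the $N'$-factor, apply the $(p^k,\epsilon_\mathbf{y})$-Kakeya bound fiberwise, and convert $\sum_\mathbf{y}\epsilon_\mathbf{y}$ into $\sum_{u_1}|T_{u_1}|$ by double counting; the key lemma that each $T_{u_1}$ is itself Kakeya in $(\Z/N'\Z)^n$ is exactly right and is where the product-of-lines structure is used. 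Your route is strictly more elementary---it never touches $M_{p^\ell,n}$, $\crank$, or tensor products again once Theorem~\ref{thm:Smain} is in hand---and yields the identical final bound. What it buys is modularity: any future improvement to the $(m,\epsilon)$-Kakeya bound over $\Z/p^k\Z$ would propagate automatically. What the paper's approach buys is that it shows the crank/tensor framework of \cite{dhar2021proof} is compatible with Arsovski's $M_{p^\ell,n}$ matrices, which may be useful for other problems; but for Theorem~\ref{main} itself your argument is cleaner. Note that your method, like the paper's, does not extend to $(m,\epsilon)$-Kakeya sets over general $N$: the lemma that $T_{u_1}$ is Kakeya genuinely needs a line in \emph{every} composite direction $(u_1,u_2)$.
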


The bound above recovers Theorem \ref{squarefr} as the size of the divisors $p_i$ grows towards infinity.

Note, the techniques here do not naively prove $(m,\epsilon)$-Kakeya bounds over $\Z/N\Z$ for general $N$ as the techniques in \cite{dhar2021proof} also do not seem to extend to this setting (even for square-free $N$). In a follow-up paper~\cite{dhar2022maximal} we solve this problem by using probabilistic arguments on top of the techniques in this paper and also give Maximal Kakeya bounds over $\Z/N\Z$ for general $N$.

\begin{obs}\label{obs:no}
The number of prime divisors of $N$ satisfies $r = O( \log N / \log \log N)$. The expression $\prod_{i=1}^{r} k_i$ is upper bounded by the number of divisors $\tau(N)$ of $N$ which satisfies $\log(\tau(N))=O( \log N / \log \log N)$. The proof for the bound on $r$ can be found in \cite{zbMATH02610444} and the bound for $\tau(n)$ is {\em Theorem 317} in \cite{Hardy2008AnIT}. We now see that the expression in Theorem~\ref{main} is lower bounded by $C_n N^{n - O(n/\log \log N)}$ and so it indeed proves Conjecture~\ref{conj} for all $N$. \end{obs}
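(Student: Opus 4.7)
The goal is to show that Theorem~\ref{main} implies Conjecture~\ref{conj}, i.e., that for each fixed $n\in\N$ and $\epsilon>0$ there exists $C_{n,\epsilon}>0$ such that every Kakeya set $S\subseteq (\Z/N\Z)^n$ satisfies $|S|\geq C_{n,\epsilon}\,N^{n-\epsilon}$. Writing $N = p_1^{k_1}\cdots p_r^{k_r}$ and taking logarithms of the bound in Theorem~\ref{main}, it suffices to show
\[
n\sum_{i=1}^{r}\log\!\bigl(2(k_i+\lceil\log_{p_i}(n)\rceil)\bigr) \;\leq\; \epsilon\log N + O_{n,\epsilon}(1).
\]

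The key algebraic step I would carry out is to decouple the $k_i$ contribution (which is controlled by the divisor function $\tau(N)$) from the $\log_{p_i}(n)$ contribution (which depends only on $n$ and appears $r$ times). Using the elementary inequality $k_i+\ell\leq k_i(1+\ell)$, valid for every integer $k_i\geq 1$ and every real $\ell\geq 0$, together with the fact that $\lceil\log_{p_i}(n)\rceil\leq \lceil\log_2 n\rceil$ since $p_i\geq 2$, one obtains
\[
\prod_{i=1}^{r} 2(k_i+\lceil\log_{p_i}(n)\rceil) \;\leq\; \bigl(2(1+\lceil\log_2 n\rceil)\bigr)^{r}\prod_{i=1}^{r}k_i \;\leq\; \bigl(2(1+\lceil\log_2 n\rceil)\bigr)^{r}\tau(N),
\]
where the last inequality uses $\prod_i k_i \leq \prod_i(k_i+1) = \tau(N)$.

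Then I would invoke the two classical estimates cited in the observation: the number $r$ of distinct prime factors of $N$ satisfies $r=O(\log N/\log\log N)$, and the divisor function obeys $\log\tau(N) = O(\log N/\log\log N)$ by Theorem~317 of Hardy--Wright. Taking the $n$-th power of the displayed inequality and passing to logs, the quantity of interest becomes
\[
nr\log\!\bigl(2(1+\lceil\log_2 n\rceil)\bigr)+n\log\tau(N) \;=\; O_n\!\left(\frac{\log N}{\log\log N}\right),
\]
where the implicit constant depends only on $n$ (through the factor $\log(2(1+\lceil\log_2 n\rceil))$). Since this is $o(\log N)$, for any fixed $\epsilon>0$ the right-hand side drops below $\epsilon\log N$ once $N$ exceeds a threshold $N_0(n,\epsilon)$; for the finitely many $N\leq N_0$ one absorbs any non-trivial lower bound into a sufficiently small $C_{n,\epsilon}$, giving $|S|\geq C_{n,\epsilon}\,N^{n-\epsilon}$ unconditionally.

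The main obstacle is essentially non-existent: the plan reduces to one elementary decoupling inequality and the two cited number-theoretic estimates on $\omega(N)=r$ and on $\tau(N)$. The only subtlety worth pointing out is the use of $k_i\geq 1$ when applying $k_i+\ell\leq k_i(1+\ell)$, which is automatic since each $p_i$ is a genuine prime divisor of $N$.
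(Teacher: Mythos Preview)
Your proposal is correct and follows exactly the approach sketched in the paper's observation: bound $\prod_i k_i$ by $\tau(N)$, bound the $\lceil\log_{p_i}(n)\rceil$ terms uniformly by a constant depending only on $n$, and then invoke the cited estimates $r=O(\log N/\log\log N)$ and $\log\tau(N)=O(\log N/\log\log N)$. The only addition you make is to spell out the decoupling inequality $k_i+\ell\le k_i(1+\ell)$ explicitly, which the paper leaves implicit.
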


Note, we could also get Kakeya Set bounds for $\Z/N\Z$ for general $N$ by combining the techniques from \cite{arsovski2021padic} and \cite{dhar2021proof}. This would also resolve Conjecture~\ref{conj} by proving a Kakeya set lower bound of $C_n N^{n - O(n\log(n)/\log \log N)}$ with a worse dependence on the dimension $n$.

We also construct Kakeya sets over $\Z/p^k\Z$ showing that the bound in Theorem~\ref{thm:Smain} is close to being sharp. 

\begin{thm}[Small Kakeya sets over $\Z/p^k\Z$]\label{thm:con}
Let $s,n\in \N$, $p\ge 3$ be a prime and $k=(p^{s+1}-1)/(p-1)$. There exists a Kakeya set $S$ in $(\Z/p^k\Z)^n$ such that,
$$|S|\le \frac{p^{kn}}{k^{n-1}}(1-1/p)^{-n}.$$
\end{thm}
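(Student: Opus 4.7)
The statement asserts the existence of a small Kakeya set, so I would exhibit $S$ explicitly. The value $k=(p^{s+1}-1)/(p-1)=|\P^s(\F_p)|$ strongly suggests that the construction should be organized around a projective structure of size $k$ over $\F_p$, so that the $k^{n-1}$ savings will reflect an $\F_p$-projective combinatorial matching across $n-1$ of the coordinate directions.

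My approach is in two stages. First I would construct a two-dimensional template $T\subseteq(\Z/p^k\Z)^2$ of size at most $p^{2k}/k\cdot(p/(p-1))^2$ that contains a line in every direction of $\P^1(\Z/p^k\Z)$. The natural candidate is a tangent-line construction generalizing the classical $\F_p$ Saraf--Sudan set built from tangent lines to $y=x^2$: I would take $T$ to be a union of translates of tangent lines to the graph of a Frobenius-type polynomial $f:\Z/p^k\Z\to\Z/p^k\Z$, such as $f(x)=c_0x+c_1x^{1+p}+c_2x^{1+p+p^2}+\cdots+c_sx^{1+p+\cdots+p^s}$ with suitable coefficients (chosen to ensure coverage of slopes of every $p$-adic valuation). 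The exponent structure is dictated by the identity $k=1+p+\cdots+p^s$, which is what makes the tangent-slope map $x\mapsto f'(x)$ compress the space of slopes by a factor of exactly $k$ in the relevant sense.

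Second, I would lift $T$ to $n$ dimensions via the fiber product
\[
S=\{(x_1,\ldots,x_n)\in(\Z/p^k\Z)^n:(x_i,x_n)\in T\text{ for each }i=1,\ldots,n-1\},
\]
together with a symmetric copy covering directions whose last coordinate is a non-unit. The Kakeya property of $S$ is immediate from that of $T$: given a direction $(u_1,\ldots,u_{n-1},1)$, the 2-dimensional Kakeya property of $T$ provides shifts $c_i$ with $\{(u_it+c_i,t):t\in\Z/p^k\Z\}\subseteq T$, and concatenating coordinates yields the desired line in $S$. For the size, write $T_y=\{x:(x,y)\in T\}$, so that $|S|=\sum_y|T_y|^{n-1}$. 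Provided $T$ can be built with approximately uniform slices ($|T_y|\approx|T|/p^k$), convexity gives $|S|\approx p^k\cdot(|T|/p^k)^{n-1}=|T|^{n-1}/p^{k(n-2)}$, which matches the claimed bound after absorbing boundary effects into the factor $(p/(p-1))^n$.

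\textbf{Main obstacle.} The crux is Stage 1: producing the two-dimensional template $T$ with balanced horizontal slices, covering every direction in $\P^1(\Z/p^k\Z)$, and achieving the $1/k$ compression. The difficulty is that the tangent-slope map $f'$ must surject (up to the allowed translations) onto \emph{all} of $\P^1(\Z/p^k\Z)$, including directions with slope of high $p$-adic valuation; this requires a case analysis along the filtration $1+p^j(\Z/p^k\Z)$ of the unit group. The identity $k=(p^{s+1}-1)/(p-1)$ is essential here because it matches precisely the index of the canonical Frobenius-like subgroup of $(\Z/p^k\Z)^*$ through which the construction compresses, enabling a clean bijective pairing of direction classes with points of $\P^s(\F_p)$.
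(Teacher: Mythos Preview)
Your overall architecture coincides with the paper's: build a two-dimensional template $T$ containing a line of every slope, lift to dimension $n$ by the fiber product over one coordinate (the paper writes $S_n=\{(t,tu_2-g(u_2),\ldots,tu_n-g(u_n)):t,u_i\in\Z/p^k\Z\}$, which is exactly your fiber product with the base coordinate moved to the first slot), then cover the remaining directions by induction on $n$ rather than by a ``symmetric copy''. In the paper the slices over the base coordinate all have size at most $p^{k-s}$, so no convexity step is needed in the lift.

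Where you diverge is the 2D template itself---precisely your declared obstacle---and here the paper does \emph{not} use tangent lines to a polynomial. It defines $g$ by digit manipulation: writing $u=\sum_{j=0}^{k-1}a_jp^j$ in base $p$, set $g(u)=\sum_j a_jc_jp^j$ for a fixed sequence $c_0,\ldots,c_{k-1}\in\{0,\ldots,p^s-1\}$, so that
\[
tu-g(u)-\bigl(tv-g(v)\bigr)=\sum_{j}(t-c_j)(a_j-b_j)\,p^{j}.
\]
The sequence is engineered so that for every $t'=t\bmod p^s$ there is a run of indices $\beta,\ldots,\beta+s-1$ with $c_{\beta+i}\equiv t'\pmod{p^{s-i}}$; this forces the displayed sum to vanish modulo $p^{\beta+s}$ whenever $u,v$ agree in their first $\beta$ digits, so the image of $u\mapsto tu-g(u)$ has at most $p^{k-s}$ elements. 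Such a sequence fits into length $k$ exactly because each value $c\in\{0,\ldots,p^s-1\}$ must be repeated one more time than its number of trailing base-$p$ zeros, and those counts sum to $(p^{s+1}-1)/(p-1)=k$. Your identity $k=|\P^s(\F_p)|$ thus enters through digit combinatorics, not through a Frobenius polynomial.

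Your tangent-line plan may be salvageable, but as written it is only a heuristic. With exponents $e_j=1+p+\cdots+p^j$ one has $e_j-1=p\,e_{j-1}$, so the formal derivative $f'$ factors through $x\mapsto x^p$; getting surjectivity onto \emph{all} slopes (including non-units) and uniform smallness of the fibers for every fixed $x$ would each require separate arguments that you have not supplied. The paper's digit-based $g$ sidesteps these issues entirely.
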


The construction here uses ideas from the earlier constructions in \cite{hickman2018fourier,Fraser_2016,CML_2018__10_1_3_0} but is quantitatively stronger.

Using the Chinese remainder theorem we also get a construction for $N$ with multiple prime factors showing that the bounds in Theorem~\ref{main} are also close to being sharp.

\begin{cor}[Small Kakeya sets over $\Z/N\Z$]\label{cor:con}
Let $n,s_1,\hdots,s_r\in \N$, $p_1,\hdots,p_r\ge 3$ be primes, $k_i=(p_i^{s_i+1}-1)/(p_i-1),i=1,\hdots,r$ and $N=p_1^{k_1}\hdots p_r^{k_r}$. There exists a Kakeya set $S$ in $(\Z/N\Z)^n$ such that,
$$|S|\le \frac{N^n}{k_1^{n-1}\hdots k_r^{n-1}} \prod\limits_{i=1}^r (1-1/p_i)^{-n} $$
\end{cor}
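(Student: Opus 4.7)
The plan is to combine Theorem~\ref{thm:con} with the Chinese Remainder Theorem in the obvious way. Since the $p_i^{k_i}$ are pairwise coprime, CRT gives a ring isomorphism $\Z/N\Z \cong \prod_{i=1}^r \Z/p_i^{k_i}\Z$, which in turn yields a bijection $(\Z/N\Z)^n \cong \prod_{i=1}^r (\Z/p_i^{k_i}\Z)^n$ sending $x \mapsto (x \bmod p_i^{k_i})_{i}$. For each $i$, Theorem~\ref{thm:con} supplies a Kakeya set $S_i \subseteq (\Z/p_i^{k_i}\Z)^n$ with $|S_i| \leq p_i^{k_i n}/k_i^{n-1} \cdot (1-1/p_i)^{-n}$. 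I would take $S$ to be the preimage of the product $S_1 \times \cdots \times S_r$ under the CRT bijection. Multiplying the bounds immediately gives
$$|S| = \prod_{i=1}^r |S_i| \leq \frac{N^n}{k_1^{n-1}\cdots k_r^{n-1}} \prod_{i=1}^r (1-1/p_i)^{-n},$$
which is the desired size bound.

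The only thing that needs checking is that the product set is indeed a Kakeya set in $(\Z/N\Z)^n$. Fix a direction $u \in \P(\Z/N\Z)^{n-1}$ with representative in $(\Z/N\Z)^n$, and let $u_i = u \bmod p_i^{k_i}$. By the definition of the projective space in the excerpt, each $u_i$ has at least one unit coordinate in $\Z/p_i^{k_i}\Z$, so $u_i$ is a valid direction in $\P(\Z/p_i^{k_i}\Z)^{n-1}$. Since $S_i$ is a Kakeya set, there is a point $a_i \in (\Z/p_i^{k_i}\Z)^n$ with $\{a_i + \lambda_i u_i : \lambda_i \in \Z/p_i^{k_i}\Z\} \subseteq S_i$. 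Let $a \in (\Z/N\Z)^n$ be the CRT preimage of $(a_1,\ldots,a_r)$.

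Then for any $\lambda \in \Z/N\Z$, the $i$-th CRT component of $a + \lambda u$ is $a_i + (\lambda \bmod p_i^{k_i}) u_i$, which lies in $S_i$. Hence $a + \lambda u \in S$ for every $\lambda$, so the line $\{a+\lambda u : \lambda \in \Z/N\Z\}$ sits inside $S$. Since $u$ was arbitrary, $S$ contains a line in every direction and is a Kakeya set. No step here is really an obstacle; the content of the corollary is entirely in Theorem~\ref{thm:con}, and CRT just multiplies the local constructions coordinate-wise.
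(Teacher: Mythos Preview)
Your proposal is correct and follows exactly the same approach as the paper: take the product of the Kakeya sets from Theorem~\ref{thm:con} under the CRT isomorphism and multiply the size bounds. The paper simply cites Fact~\ref{fact:remainder} for the decomposition of directions and lines, whereas you spell out explicitly why the product of Kakeya sets is again a Kakeya set; the content is identical.
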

\begin{proof}
Using the Chinese Remainder Theorem (see Lemma~\ref{fact:remainder}) we see that the product of Kakeya sets over $\Z/p_i^{k_i}\Z$ for $i=1,\hdots,r$ will be a Kakeya set over $\Z/N\Z$. We are then done using Theorem~\ref{thm:con}.
\end{proof}

\subsection{Proof Overview}\label{sec:proofOver}
We first start with a brief overview of the approach introduced in \cite{dhar2021proof}. Given a Kakeya set $S$ in $(\Z/N\Z)^n$, we construct a matrix $K_S$ whose columns are indexed by points in $(\Z/N\Z)^n$ and its rows are indexed by a direction in $\P (\Z/N\Z)^{n-1}$ where the $d$th row would be supported on the line in direction $d$ contained in $S$. This ensures that the non-zero columns of $K_S$ correspond to points in $S$, which implies that the rank of $K_S$ then lower bounds the size of $S$. The goal is to construct a suitable $K_S$ and find a matrix $E$ such that $K_SE$ is a matrix independent of $S$ whose rank can be analyzed easily. For prime $N=p$, $E$ can be a matrix whose columns contain the evaluation of a monomial on $\F_p^n$. In this case $K_S$ ends up being a `decoder' matrix where each row outputs the evaluation of a monomial on a direction given its evaluations on a line in that direction. This turns out to be a reformulation of Dvir's polynomial method proof~\cite{dvir2009size}. When $N=pq$ (or in general is square-free) as $\Z/N\Z\cong \F_p\times \F_q$ we define $E$ as a tensor of matrices one acting on the $\F_p$ part and the other on the $\F_q$ part. This allows for an inductive argument to give Kakeya set lower bounds for square-free $N$. To get quantitatively stronger bounds we can use the evaluations of (Hasse) derivatives and multiplicities as was done in \cite{saraf2008,dvir2013extensions}.

In \cite{arsovski2021padic} for prime power $N$ points in $\F_p^n$ are embedded in $\C_p^n$ (the $p$-adic complex numbers which is isomorphic to $\C$ as a field) using $p^k$th roots of unity. The proof then implements the strategy of \cite{dhar2021proof} using matrices with polynomial entries. $U$ is a matrix whose columns are the evaluations of monomials over the embedding of $(\Z/p^k\Z)^n$ in the torus. Let $L$ be a line in direction $d$. The key statement is that some linear combination (with polynomial coefficients) of the rows in $U$ corresponding to points in $L$ can generate the $d$th row of a `Vandermonde' matrix $M$ after applying a mod $p$ operation (using the $p$-adic structure). The rank of $M$ can be easily lower bounded. The linear combination being taken here corresponds to a row of $K_S$ for a Kakeya set $S\subseteq (\Z/p^k\Z)^n$ containing $L$ in direction $d$. The key statement can be reformulated as saying that 

\begin{equation}\label{eq:proofOverTorusPoly}
    K_SU  \mod p = M
\end{equation}

The argument can be completed by saying that the non-zero columns of $S$ correspond to points in $S$, alternatively we note that we only use rows in $E$ which correspond to points in $S$. This shows the rank of $M$ over $\F_p$ lower bounds the size of $S$. The argument in \cite{arsovski2021padic} is only for $(p^k,1)$-Kakeya sets. This is because, apriori it is not clear how to define a suitable $K_S$ (equivalently how to decode from lines with only some points contained in $S$). The revised version of the paper~\cite{arsovskiNew} has a very different approach to give $(m,\epsilon)$-Kakeya bounds for the prime power case (the bounds there are quantitatively weaker than the ones here).

We now discuss how we improve the prime power bound quantitatively. Using simple linear algebraic arguments and the Chinese remainder theorem for a suitable polynomial ring we show that given $m$ points on a line $L$ in direction $d$ we can decode the $d$th row of $M$ up to degree $m$ (see Corollary~\ref{lem:decoding}). We use multiplicities and (Hasse) derivatives to get stronger quantitative bounds (which Theorem~\ref{thm:con} shows are almost sharp). We also develop these ideas without using the theory of $p$-adic numbers. We make this precise in Section~\ref{sec:PreAr}.

To get bounds for general $N$ we follow an inductive style argument as was done in \cite{dhar2021proof} with some small technical improvements to get better constants.

\subsection{Organization of the paper}
In Section \ref{sec:PreDD} we state definitions and results we need from \cite{dhar2021proof}. In Section \ref{sec:PreAr} we state and extend results from \cite{arsovski2021padic}. In Section \ref{sec:pkBd} we prove Theorem~\ref{thm:Smain}. In Section \ref{sec:Nbd} we prove Theorem~\ref{main}. Finally, in section 6 we prove Theorem~\ref{thm:con}.

\subsection{Acknowledgements}
The author would like to thank Zeev Dvir for helpful comments and discussion. The author would also like to thank the reviewers for their helpful suggestions. This work was done while the author was supported by the NSF grant DMS-1953807.

\section{Rank and crank of matrices with polynomial entries}\label{sec:PreDD}

\begin{define}[Rank of matrices with entries in {$\mathbb{F}[z]/ \langle f(z)\rangle$}]
Given a field $\F$ and a matrix $M$ with entries in $\F[z]/\langle f(z)\rangle$ where $f(z)$ is a non-constant polynomial in $\F[z]$, we define the $\F$-rank of $M$ denoted {\em $\rank_\F M$}  as the maximum number of $\F$-linearly independent columns of $M$.
\end{define}

In our proof, it will also be convenient to work with the following extension of rank for sets of matrices.

\begin{define}[crank of a set of matrices]
Let $\F$ be a field and $f(z)$ a non-constant polynomial in $\F[z]$. Given a finite set $T=\{A_1,\hdots,A_n\}$ of matrices over the ring $\F[z]/\langle f(z)\rangle$ having the same number of columns we let {\em $\crank_\F(T)$} be the $\F$-rank of the matrix obtained by concatenating all the elements $A_i$ in $T$ along their columns.\footnote{
In \cite{dhar2021proof} crank was defined using the rows of the matrices. The two definitions are equivalent when the entries of the matrices are from the field $\F$ following from simple linear algebra. In our setting, the column and row definitions are not necessarily identical and the column version is better for our purposes.} 
\end{define}

We will use a simple lemma which follows from the definition of crank.

\begin{lem}[crank bound for multiplying matrices]\label{lem:crankMatMult}
Let $\F$ be a field and $f(z)$ a non-constant polynomial in $\F[z]$. Given matrices $A_1,\hdots,A_n$ of size $a\times b$ and matrices $H_1,\hdots, H_n$ of size $c\times a$ with entries in $\F[x]/\langle f(z)\rangle$ we have
\em{$$\crank_\F\{A_i\}_{i=1}^n\ge\crank_\F\{H_i \cdot A_i\}_{i=1}^n.$$}
\end{lem}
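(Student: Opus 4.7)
The plan is to reduce the crank inequality to the standard fact that left-multiplication by a fixed matrix cannot increase rank. Concretely, let $M = [A_1 \mid A_2 \mid \cdots \mid A_n]$ denote the $a \times nb$ matrix formed by concatenating the $A_i$'s along their columns. By definition, $\crank_\F\{A_i\}_{i=1}^n = \rank_\F M$. Since ordinary matrix multiplication is column-wise, one has
\[
H_1 M = [\,H_1 A_1 \mid H_1 A_2 \mid \cdots \mid H_1 A_n\,],
\]
so $\crank_\F\{H_1 \cdot A_i\}_{i=1}^n = \rank_\F(H_1 M)$. Thus the lemma is equivalent to the inequality $\rank_\F(H_1 M) \le \rank_\F M$.

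To prove this inequality, I would observe that left-multiplication by $H_1$ defines an $\F$-linear map
\[
\phi_{H_1} : \bigl(\F[z]/\langle f(z)\rangle\bigr)^{a} \longrightarrow \bigl(\F[z]/\langle f(z)\rangle\bigr)^{c}, \qquad v \mapsto H_1 v.
\]
This map is $\F$-linear because the scalars $\lambda \in \F$ lie in the center of the ring $\F[z]/\langle f(z)\rangle$, so $H_1(\lambda v) = \lambda (H_1 v)$ and $H_1(v+v') = H_1 v + H_1 v'$. Consequently, if $c_1,\dots,c_r$ are columns of $M$ that form an $\F$-basis of the $\F$-span of all columns of $M$ (with $r = \rank_\F M$), then every column of $H_1 M$ is of the form $H_1 c$ with $c \in \mathrm{span}_\F(c_1,\dots,c_r)$, and applying $\phi_{H_1}$ shows that $H_1 c$ lies in $\mathrm{span}_\F(H_1 c_1,\dots,H_1 c_r)$. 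Hence the columns of $H_1 M$ have $\F$-rank at most $r$, which is exactly the desired bound.

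There is no real obstacle in this argument; the entire content is the verification that $\phi_{H_1}$ is $\F$-linear (not merely additive), which is automatic because $\F$ is contained in the coefficient ring $\F[z]/\langle f(z)\rangle$ and commutes with all its elements. The lemma then follows immediately from the well-known rank bound $\rank_\F(H_1 M) \le \rank_\F M$.
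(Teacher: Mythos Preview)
You have misread the definition of $\crank_\F$. In the paper, the matrices $A_i$ (all with $b$ columns) are \emph{stacked vertically} into an $(na)\times b$ matrix, and $\crank_\F\{A_i\}$ is the maximal number of $\F$-linearly independent columns of that stack. The paper's own proof makes this explicit: a column dependence is a single vector $w\in\F^b$ with $A_i w=0$ for every $i$. Your $M=[A_1\mid\cdots\mid A_n]$ is the $a\times nb$ horizontal concatenation, which has a different $\F$-rank and is not what $\crank_\F$ measures.

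There is also a typo in the displayed inequality: the right-hand side should be $\crank_\F\{H_i\cdot A_i\}_{i=1}^n$, as the paper's proof and all later applications of the lemma (where each $A_i$ is multiplied by its own $H_i$) confirm. With the intended $H_i$'s your identity $H_1 M=[H_1A_1\mid\cdots\mid H_1A_n]$ fails, so the argument as written does not establish the lemma.

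The repair is immediate once the correct stacking is used. Writing
\[
M_A=\begin{pmatrix}A_1\\ \vdots\\ A_n\end{pmatrix},\qquad
M_{HA}=\begin{pmatrix}H_1A_1\\ \vdots\\ H_nA_n\end{pmatrix}
=\operatorname{diag}(H_1,\ldots,H_n)\,M_A,
\]
your observation that left multiplication by a matrix over $\F[z]/\langle f(z)\rangle$ is $\F$-linear gives $\rank_\F M_{HA}\le\rank_\F M_A$ directly. Equivalently, and this is exactly the paper's one-line proof, any $w\in\F^b$ with $A_i w=0$ for all $i$ satisfies $H_iA_i w=0$ for all $i$, so the space of column dependences can only grow. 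Thus the underlying idea is fine; only the setup needs to be corrected, after which your argument coincides with the paper's.
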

\begin{proof}
A dependence on the columns of the matrix obtained by concatenating $A_i$s will be represented by a non-zero vector $w\in \F^b$ such that $A_iw=0$ for all $i=1,\hdots,n$. This would imply $(H_i\cdot A_i)w=0$ for all $i$ as well, completing the proof.
\end{proof}

Given a matrix $A$ with entries in $\F[z]/\langle f(z)\rangle$ we want to construct a new matrix with entries only in $\F$ such that their $\F$-ranks are the same. First, we state a simple fact about $\F[z]/\langle f(z)\rangle$.

\begin{lem}[Unique representation of elements in $\F [z\text{]}/\langle f(z)\rangle$]
Let $\F$ be a field and $f(z)$ a non-constant polynomial in $\F[z]$ of degree $d>0$. Every element in $\F[z]/\langle f(z)\rangle$ is uniquely represented by a polynomial in $\F[z]$ with degree strictly less than $d$ and conversely every degree strictly less than $d$ polynomial in $\F[z]$ is a unique element in $\F[z]/\langle f(z)\rangle$. When we refer to an element $h(z)\in \F[z]/\langle f(z)\rangle$ we also let it refer to the unique degree strictly less than $d$ polynomial it equals.
\end{lem}

\begin{define}[Coefficient matrix of $A$]
Let $\F$ be a field and $f(z)$ a non-constant polynomial in $\F[z]$ of degree $d>0$. Given any matrix $A$ of size $n_1\times n_2$ with entries in $\F[z]/\langle f(z)\rangle$ we can construct the {\em coefficient matrix of $A$} denoted by {\em $\Coeff(A)$} with entries in $\F$ which will be of size $dn_1\times n_2$ whose rows are labeled by elements in $\{0,\hdots,d-1\}\times [n_1]$ such that its $(i,j)$'th row is formed by the coefficients of $z^i$ of the polynomial entries of the $j$'th row of $A$. 
\end{define}

The key property of the coefficient matrix immediately follows from its definition.

\begin{lem}\label{lem:coeffR}
Let $\F$ be a field and $f(z)$ a non-constant polynomial in $\F[z]$ of degree $d>0$. Given any matrix $A$ with entries in $\F[z]/\langle f(z)\rangle$ and its coefficient matrix {\em $\Coeff(A)$} it is the case that an $\F$-linear combination of a subset of columns of $A$ is $0$ if and only if the corresponding $\F$-linear combination of the same subset of columns of {\em $\Coeff(A)$} is also $0$.

In particular, the $\F$-rank of $A$ equals the $\F$-rank of {\em $\Coeff(A)$}.
\end{lem}

We now need some simple properties related to the crank of tensor products. To that end, we first define the tensor/Kronecker product of matrices. Let $[r]=\{1,\hdots,r\}$.

\begin{define}[Kronecker Product of two matrices]
Given a commutative ring $R$ and two matrices $M_A$ and $M_B$ of sizes $n_1\times m_1$ and $n_2\times m_2$ corresponding to $R$-linear maps $A:R^{n_1}\rightarrow R^{m_1}$ and $B:R^{n_2}\rightarrow R^{m_2}$ respectively, we define the Kronecker product $M_A\otimes M_B$ as a matrix of size $n_1n_2\times m_1m_2$ with its rows indexed by elements in $[n_1]\times [n_2]$ and its columns indexed by elements in $[m_1]\times [m_2]$ such that
$$M_A\otimes M_B ((r_1,r_2),(c_1,c_2))=M_A(r_1,c_1)M_B(r_2,c_2),$$
where $r_1\in [n_1],r_2\in [n_2],c_1\in [m_1]$ and $c_2\in [m_2]$. $M_A\otimes M_B$ corresponds to the matrix of the $R$-linear map $A\otimes B: R^{n_1}\otimes R^{n_2}\cong R^{n_1n_2}\rightarrow R^{m_1}\otimes R^{m_2}\cong R^{m_1m_2}$.
\end{define}

We will need the following simple property of Kronecker products which follows from the corresponding property of the tensor product of linear maps.

\begin{lem}[Multiplication of Kronecker products]\label{fact:multOfKronecker}
Given matrices $A_1,A_2,B_1$ and $B_2$ of sizes $a_1\times n_1$, $a_2\times n_2$, $n_1\times b_1$ and $n_2\times b_2$ we have the following identity,
$$(A_1 \otimes A_2) \cdot (B_1\otimes B_2)=(A_1\cdot B_1)\otimes (A_2\cdot B_2).$$
\end{lem}

We want to prove a crank bound for a family of matrices with a tensor product structure. This statement is an analog of Lemma 4.8 in \cite{dhar2021proof} for our setting. Indeed we will prove it using Lemma 4.8 in \cite{dhar2021proof} which we now state.

\begin{lem}[Lemma 4.8 in \cite{dhar2021proof}]\label{lem:crankTensor}
Given matrices $A_1,\hdots,A_n$ of size $a_1\times a_2$ over a field $\F$ such that 
{\em $$\crank\{A_i\}_{i=1}^n\ge r_1$$} 
and matrices $B_{i,j}$ over the field $\F$ for $i\in [n]$ and $j\in [m]$ of size $b_1\times b_2$ such that 
{\em $$\crank\{B_{i,j}\}_{j=1}^m\ge r_2$$}
for all $i\in [n]$ we have,
\em{$$\crank\{A_i\otimes B_{i,j}|i\in [n],j\in[m]\}\ge r_1r_2.$$}
\end{lem}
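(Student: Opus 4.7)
The plan is to exhibit $r_1 r_2$ columns of the large concatenated matrix $[A_i \otimes B_{i,j}]_{i \in [n], j \in [m]}$ that are $\F$-linearly independent. Unwinding the definition of $\crank$, the hypothesis $\crank\{A_i\}_{i=1}^n \geq r_1$ supplies vectors $v^{(1)}, \ldots, v^{(r_1)} \in \F^{a_1}$ that are linearly independent, with each $v^{(k)}$ of the form $A_{i_k} e_{c_k}$ for some $i_k \in [n]$ and $c_k \in [a_2]$, i.e.\ a genuine column of some $A_i$. Similarly, for each value of $i$ that occurs among the $i_k$'s, the hypothesis $\crank\{B_{i,j}\}_{j=1}^m \geq r_2$ furnishes $r_2$ linearly independent vectors $w^{(i,1)}, \ldots, w^{(i,r_2)} \in \F^{b_1}$, each arising as a column of some $B_{i,j}$.

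For every $(k, \ell) \in [r_1] \times [r_2]$, I would then look at the tensor $v^{(k)} \otimes w^{(i_k, \ell)}$. By the definition of the Kronecker product (cf.\ Fact~\ref{fact:multOfKronecker}), this tensor is literally one of the columns of $A_{i_k} \otimes B_{i_k, j}$ for the index $j$ witnessing $w^{(i_k, \ell)}$, and hence it is a column of the large concatenated matrix. It therefore suffices to prove that these $r_1 r_2$ tensors are $\F$-linearly independent in $\F^{a_1} \otimes \F^{b_1}$, and the crank lower bound follows immediately.

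The linear independence is the heart of the argument, and the one genuine subtlety is that the families $w^{(i, \cdot)}$ live in possibly different $r_2$-dimensional subspaces $W_i \subseteq \F^{b_1}$ as $i$ varies, so one cannot pretend to work in a single global basis of $\F^{b_1}$. The trick is to group the dependence by $k$ before unpacking: assume $\sum_{k, \ell} \alpha_{k, \ell}\, v^{(k)} \otimes w^{(i_k, \ell)} = 0$ and set $u^{(k)} := \sum_\ell \alpha_{k, \ell}\, w^{(i_k, \ell)} \in \F^{b_1}$, so that the relation becomes $\sum_{k=1}^{r_1} v^{(k)} \otimes u^{(k)} = 0$. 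Extending $v^{(1)}, \ldots, v^{(r_1)}$ to a basis of $\F^{a_1}$ and reading coordinates in the induced tensor basis forces $u^{(k)} = 0$ for each $k$ separately. Fixing such a $k$, the equation $\sum_\ell \alpha_{k, \ell}\, w^{(i_k, \ell)} = 0$ now involves only the \emph{single} index $i = i_k$, and linear independence of $w^{(i_k, 1)}, \ldots, w^{(i_k, r_2)}$ in $\F^{b_1}$ yields $\alpha_{k, \ell} = 0$ for all $\ell$. I expect this grouping-by-$k$ step to be the only point requiring care; once it is in place, the rest is routine linear algebra of tensor products.
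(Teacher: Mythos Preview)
Your proof is correct and is precisely the kind of ``simple properties of the tensor product'' argument the paper alludes to; the paper itself does not prove this lemma but only cites \cite{dhar2021proof}. The grouping-by-$k$ step you flag is indeed the only nontrivial point, and your treatment of it is clean.

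One small matter of convention: in this paper $\crank$ is formed by stacking the $A_i$ \emph{vertically} so that their common set of $a_2$ columns aligns (see the proof of Lemma~\ref{lem:crankMatMult}, where a column dependence is a single $w\in\F^{b}$ with $A_iw=0$ for every $i$). Thus $\crank\{A_i\}\ge r_1$ yields $r_1$ linearly independent \emph{rows} $v^{(k)}\in\F^{a_2}$, each a row of some $A_{i_k}$, rather than columns $A_{i_k}e_{c_k}\in\F^{a_1}$. With that swap your argument goes through verbatim, since the $(r,s)$-row of $A_i\otimes B_{i,j}$ is exactly $(\text{row }r\text{ of }A_i)\otimes(\text{row }s\text{ of }B_{i,j})$; over a field this is of course the same argument up to transpose.
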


The above lemma follows easily from simple properties of the tensor product and a proof can be found in \cite{dhar2021proof}. We now prove a generalization for our setting. 

\begin{lem}[crank bound for tensor products]\label{lem:crankTensorProduct}
Let $\F$ be a field and $f(z)\in \F[z]$ a non-constant polynomial. Given matrices $A_1,\hdots,A_n$ of size $a_1\times a_2$ over the ring $\F[z]/\langle f(z)\rangle$ such that 
{\em$$\crank_\F\{A_i\}_{i=1}^n\ge r_1$$} 
and matrices $B_{i,j}$ over the field $\F$ for $i\in [n]$ and $j\in [m]$ of size $b_1\times b_2$ such that 
{\em $$\crank_\F\{B_{i,j}\}_{j=1}^m\ge r_2$$}
for all $i\in [n]$ we have,
\em{$$\crank_\F\{A_i\otimes B_{i,j}|i\in [n],j\in[m]\}\ge r_1r_2.$$}
\end{lem}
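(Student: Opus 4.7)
The plan is to reduce to the field-coefficient version, Lemma~\ref{lem:crankTensor}, by passing through the coefficient matrix construction. The key observation is that the $\Coeff$ operation interacts well with Kronecker products against matrices whose entries lie in $\F$.

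First, I would translate the hypothesis on the $A_i$ to the field setting. By Fact~\ref{lem:coeffR}, $\F$-dependencies among the columns of any matrix $M$ over $\F[z]/\langle f(z)\rangle$ correspond exactly to $\F$-dependencies among the columns of $\Coeff(M)$. Since $\Coeff$ applied to a column-concatenation is the column-concatenation of the individual $\Coeff$s, this gives
$$\crank_\F\{A_i\}_{i=1}^n = \crank\{\Coeff(A_i)\}_{i=1}^n \ge r_1,$$
now as an ordinary $\F$-rank statement about matrices over $\F$.

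The main step is to verify the identity
$$\Coeff(A_i \otimes B_{i,j}) = \Coeff(A_i) \otimes B_{i,j}$$
(up to a fixed permutation of rows that is independent of $i$ and $j$) whenever the entries of $B_{i,j}$ lie in the base field $\F$. This is a direct unwinding: writing $A_i(r_1,c_1) = \sum_{\ell=0}^{d-1} a^{(i)}_{r_1,c_1,\ell}\,z^\ell$, the $z^\ell$-coefficient of the $((r_1,r_2),(c_1,c_2))$ entry of $A_i \otimes B_{i,j}$ is $a^{(i)}_{r_1,c_1,\ell}\,B_{i,j}(r_2,c_2)$, which is exactly the $((\ell,r_1),r_2),(c_1,c_2)$ entry of $\Coeff(A_i) \otimes B_{i,j}$ after reindexing the rows from $(\ell,(r_1,r_2))$ to $((\ell,r_1),r_2)$. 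Because the same row permutation applies uniformly, it preserves column-concatenations and therefore preserves $\crank$.

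With this identification in hand, the conclusion follows by applying Lemma~\ref{lem:crankTensor} to the families $\{\Coeff(A_i)\}_{i=1}^n$ (with $\crank \ge r_1$) and $\{B_{i,j}\}_{j=1}^m$ (with $\crank \ge r_2$ for each $i$), yielding
$$\crank\{\Coeff(A_i)\otimes B_{i,j} \mid i\in[n],\, j\in[m]\} \ge r_1 r_2,$$
and then invoking Fact~\ref{lem:coeffR} once more to convert this $\F$-rank back into $\crank_\F\{A_i\otimes B_{i,j}\}$. I expect the only real subtlety to be bookkeeping the row-index correspondence between $\Coeff(A\otimes B)$ and $\Coeff(A)\otimes B$; everything else is formal once one is careful that $B_{i,j}$ has entries in $\F$ (so that its entries commute past the $z^\ell$-coefficient extraction without introducing new mixing of powers of $z$).
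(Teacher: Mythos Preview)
Your proposal is correct and follows essentially the same route as the paper: pass to $\Coeff(A_i)$ via Fact~\ref{lem:coeffR}, use that $\Coeff(A_i\otimes B_{i,j})=\Coeff(A_i)\otimes B_{i,j}$ when $B_{i,j}$ has entries in $\F$, apply Lemma~\ref{lem:crankTensor}, and convert back. The paper simply asserts the $\Coeff$/tensor identity as ``easy to see,'' whereas you spell out the row-indexing; otherwise the arguments are identical.
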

Note the asymmetry between the matrices $A_i$ having entries in $\F[z]/\langle f(z)\rangle$ and the matrices $B_{i,j}$ only having entries in $\F$. This is important for the proof to work.
\begin{proof}
Consider the coefficient matrices $\Coeff(A_i)$. Using Lemma~\ref{lem:coeffR} it follows that 
\begin{align}\label{eq:T1}
    \crank_\F\{A_i\}_{i=1}^n=\crank_\F\{\Coeff(A_i)\}_{i=1}^n.
\end{align}

As $B_{i,j}$ only has entries in $\F$ it is easy to see that $\Coeff(A_i)\otimes B_{i,j}=\Coeff(A_i\otimes B_{i,j})$ (Note this would not be true if $B_{i,j}$ had entries in the ring $\F[z]/\langle f(z)\rangle$). Using Lemma~\ref{lem:coeffR} now gives us,

\begin{align}\label{eq:T2}
    \crank_\F\{A_i\otimes B_{i,j}|i\in [n],j\in[m]\}=\crank_\F\{\Coeff(A_i)\otimes B_{i,j}|i\in [n],j\in[m]\}.
\end{align}

We apply Lemma~\ref{lem:crankTensor} on the family $\Coeff(A_i)\otimes B_{i,j},i\in [n],j\in[m]$ and use equations \eqref{eq:T1} and \eqref{eq:T2} to complete the proof.
\end{proof}

\section{Polynomial Method on the complex torus}\label{sec:PreAr}
As mentioned in the introduction, the techniques presented here are an extension of the techniques used in \cite{arsovski2021padic} and are developed without invoking the tools and the language of $p$-adic analysis.

We embed $(\Z/p^k\Z)^n$ in the complex torus. As mentioned in the proof overview (Section~\ref{sec:proofOver}) we will be working with matrices with polynomial entries. We first define the rings where the entries come from.

\begin{define}[Rings $\overline{T}_{\ell}$ and $T_{\ell}^k$]
Let 
$$\overline{T}_{\ell}=\F_p[z]/\langle z^{p^\ell}-1\rangle$$ 
and 
$$T_{\ell}^k=\Z(\zeta_{p^k})[z]/\langle z^{p^\ell} -1\rangle$$
where $\zeta_{p^k}$ is a primitive complex $p^k$'th root of unity.
\end{define}

As stated in the proof overview (Section~\ref{sec:proofOver}) the goal is decode the $d$th row of a `Vandermonde' matrix starting from the evaluations of monomials on a line $L$ in direction $d$. We set $\ell$ according to the number of points in $L$ and to what order derivatives we are working with and it controls the size of our `Vandermonde' matrix as well. If we were not using any multiplicities/derivatives then $\ell$ could be set to $\log_p(m)$ to lower bound the size of $(m,\epsilon)$-Kakeya sets with weaker constants.

We suppress $p$ in the notation as it will be fixed to a single value throughout our proofs. We also let $\zeta=\zeta_{p^k}$ throughout this section for ease of notation.

Note that $\Z(\zeta)$ is the ring $\Z[x]/\langle \phi_{p^k}(x)\rangle$ where,
\begin{align}
    \phi_{p^k}(x)=\frac{x^{p^k}-1}{x^{p^{k-1}}-1}=\sum\limits_{i=0}^{p-1} x^{ip^{k-1}}\label{eq:cyclo},
\end{align}
is the $p^k$ Cyclotomic polynomial. We will also work with the field $\Q(\zeta)=\Q[x]/\langle \phi_{p^k}(x)\rangle$. 

We need a simple lemma connecting $\Z(\zeta)$ to $\F_p$.

\begin{lem}[Quotient map $\psi_{p^k}$ from $\Z(\zeta)$ to $\F_p$]
The field $\F_p$ is isomorphic to $\Z(\zeta)/\langle p,\zeta-1\rangle$. In particular, the map $\psi_{p^k}$ from $\Z(\zeta)$ to $\F_p$ which maps $\Z$ to $\F_p$ via the mod $p$ map and $\zeta$ to $1$ is a ring homomorphism.
\end{lem}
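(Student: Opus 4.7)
The plan is to unwind the definition $\Z(\zeta)=\Z[x]/\langle \phi_{p^k}(x)\rangle$ (writing $x$ in place of $\zeta$ to make the quotient manipulations cleaner) and then quotient in the other two generators. Concretely,
\[
\Z(\zeta)/\langle p,\zeta-1\rangle \;\cong\; \Z[x]/\langle \phi_{p^k}(x),\,p,\,x-1\rangle.
\]
The key computation is that $\phi_{p^k}(x)$ is redundant in this ideal: by the formula \eqref{eq:cyclo}, $\phi_{p^k}(1)=\sum_{i=0}^{p-1} 1 = p$, so $\phi_{p^k}(x) = p + (x-1)\cdot q(x)$ for some $q(x)\in\Z[x]$ (Taylor expansion at $1$, or long division by $x-1$). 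Thus $\phi_{p^k}(x)\in \langle p,x-1\rangle$ already, and we may discard it from the list of generators.

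That reduces the quotient to
\[
\Z[x]/\langle p,x-1\rangle \;\cong\; \bigl(\Z[x]/\langle x-1\rangle\bigr)/\langle p\rangle \;\cong\; \Z/p\Z \;=\; \F_p,
\]
where the first isomorphism uses $\Z[x]/\langle x-1\rangle \cong \Z$ via evaluation at $1$. This establishes the displayed isomorphism $\Z(\zeta)/\langle p,\zeta-1\rangle\cong \F_p$.

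For the ``in particular'' clause, I would describe $\psi_{p^k}$ as the composite of the canonical surjection $\Z(\zeta)\to \Z(\zeta)/\langle p,\zeta-1\rangle$ with the isomorphism just constructed. By tracing through the identifications, an element $a\in\Z\subseteq \Z(\zeta)$ is sent to $a\bmod p$, and $\zeta$ is sent to the residue of $x$ in $\Z[x]/\langle p,x-1\rangle$, which is $1\in\F_p$. Being a composite of ring homomorphisms, $\psi_{p^k}$ is itself a ring homomorphism, which is exactly the claim.

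I do not anticipate any real obstacle; the only place to be careful is justifying that $\phi_{p^k}(x)$ lies in $\langle p,x-1\rangle$, which is immediate from $\phi_{p^k}(1)=p$ together with the fact that $f(x)-f(1)$ is divisible by $x-1$ in $\Z[x]$. Everything else is a standard three-step ideal quotient computation.
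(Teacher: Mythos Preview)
Your proof is correct and follows essentially the same approach as the paper: both identify $\Z(\zeta)/\langle p,\zeta-1\rangle$ with $\Z[\zeta]/\langle \phi_{p^k}(\zeta),p,\zeta-1\rangle$ and then show $\phi_{p^k}$ is redundant in this ideal using $\phi_{p^k}(1)=p$. The only cosmetic difference is the order of reduction---the paper first mods out by $p$ and then observes $\zeta-1\mid \phi_{p^k}(\zeta)$ over $\F_p$, whereas you show $\phi_{p^k}(x)\in\langle p,x-1\rangle$ directly over $\Z$---but the content is identical.
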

\begin{proof}
$\zeta$ is a root of the Cyclotomic polynomial $\phi_{p^k}$ defined in \eqref{eq:cyclo}.
As mentioned earlier, this allows us to write $\Z(\zeta)$ as $\Z[\zeta]/\langle \phi_{p^k}(\zeta)\rangle$. Notice that using \eqref{eq:cyclo} we have $\phi_{p^k}(1)\text{ (mod }p\text{)}=0 $ which implies that $\phi_{p^k}(\zeta)$ is divisible by $\zeta-1$ over $\F_p$.

If we quotient the ring $\Z(\zeta)$ by $\langle p,\zeta-1\rangle$ we get 
$$\frac{\Z[\zeta]}{\langle \zeta-1,p, \phi_{p^k}(\zeta)\rangle}=\frac{\F_p[\zeta]}{\langle \zeta-1, \phi_{p^k}(\zeta)\rangle}=\frac{\F_p[\zeta]}{\langle \zeta-1\rangle}=\F_p.$$
Therefore, the map $\psi_{p^k}$ is the map which quotients the ring $\Z(\zeta)$ by the ideal $\langle p, \zeta-1 \rangle$.
\end{proof}

We note $\psi_{p^k}$ can be extended to the rings $\Z(\zeta)[z]/\langle h(z)\rangle$ for any $h(x)\in \Z(\zeta)[x]$ by mapping $z$ to $z$. The proof above immediately generalizes to the following corollary.

\begin{cor}[Extending $\psi_{p^k}$]\label{cor:psiE}
Let $h(x)\in \Z(\zeta)[x]$. $\psi_{p^k}$ is a ring homomorphism from $\Z(\zeta)[z]/\langle h(z)\rangle$ to $\Z(\zeta)[z]/\langle h(z), p,\zeta-1\rangle=\F_p[z]/\langle \psi_{p^k}(h(z))\rangle$.
\end{cor}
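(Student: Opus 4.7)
The plan is to build the extended $\psi_{p^k}$ in two steps and then use the universal property of quotient rings. First I would extend $\psi_{p^k}$ from $\Z(\zeta)$ to the polynomial ring $\Z(\zeta)[z]$ in the obvious coefficient-wise way, sending $\sum_i a_i z^i \mapsto \sum_i \psi_{p^k}(a_i) z^i$ and sending $z$ to $z$. Since $\psi_{p^k}:\Z(\zeta)\to \F_p$ is a ring homomorphism by the preceding lemma, this coefficient-wise extension is automatically a ring homomorphism $\widetilde{\psi}:\Z(\zeta)[z]\to \F_p[z]$; this is the standard fact that any ring homomorphism of base rings lifts uniquely to a homomorphism of polynomial rings fixing the indeterminate.

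Next I would compose $\widetilde{\psi}$ with the quotient map $\F_p[z]\to \F_p[z]/\langle \psi_{p^k}(h(z))\rangle$ to obtain a ring homomorphism $\Phi:\Z(\zeta)[z]\to \F_p[z]/\langle \psi_{p^k}(h(z))\rangle$. The key observation is that $\Phi(h(z)) = \psi_{p^k}(h(z)) = 0$ in the target, so $\langle h(z)\rangle \subseteq \ker \Phi$. By the universal property of the quotient, $\Phi$ descends to a well-defined ring homomorphism $\overline{\Phi}:\Z(\zeta)[z]/\langle h(z)\rangle \to \F_p[z]/\langle \psi_{p^k}(h(z))\rangle$; this is the extension of $\psi_{p^k}$ promised in the statement.

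Finally, to match the identification in the statement, I would verify that $\Z(\zeta)[z]/\langle h(z), p, \zeta-1\rangle \cong \F_p[z]/\langle \psi_{p^k}(h(z))\rangle$. Quotienting $\Z(\zeta)[z]$ first by $\langle p, \zeta-1\rangle$ exactly performs the coefficient-wise reduction $\widetilde{\psi}$ (by the previous lemma applied to each coefficient), yielding $\F_p[z]$; then further quotienting by the image of $h(z)$ gives $\F_p[z]/\langle \psi_{p^k}(h(z))\rangle$. Reversing the order of quotienting (taking $h(z)$ first, then $p$ and $\zeta-1$) gives the same ring, which is precisely the source of $\overline{\Phi}$ after the first quotient; the whole corollary is then an application of the third isomorphism theorem for rings.

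No step here looks genuinely difficult: the only thing to be careful about is that the ideal $\langle h(z)\rangle$ in $\Z(\zeta)[z]$ maps into $\langle \psi_{p^k}(h(z))\rangle$ in $\F_p[z]$, which is immediate from $\widetilde{\psi}$ being a ring homomorphism with $\widetilde{\psi}(h(z))=\psi_{p^k}(h(z))$ (i.e. $\widetilde{\psi}$ commutes with the formal polynomial $h$ because it fixes $z$ and acts on coefficients). So the proof is essentially a diagram chase using the universal property of quotient rings, and I do not anticipate any real obstacle.
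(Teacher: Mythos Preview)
Your proposal is correct and matches the paper's approach: the paper does not give a separate proof but simply remarks that ``the proof above immediately generalizes,'' meaning one extends $\psi_{p^k}$ coefficient-wise to $\Z(\zeta)[z]$, observes that $h(z)$ maps to $\psi_{p^k}(h(z))$, and identifies the successive quotients exactly as you do. Your write-up is a careful spelling-out of that one-line remark via the universal property of quotients, with no substantive difference in strategy.
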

$T_{\ell}^k/\langle p,\zeta-1\rangle$ being isomorphic to $\overline{T}_{\ell}$ is a special case of the corollary. The $\psi_{p^k}$ will correspond to the mod $p$ operation from the overview.

The operation $\psi_{p^k}$ corresponds to the mod $p$ operation in \eqref{eq:proofOverTorusPoly} in the proof overview (Section~\ref{sec:proofOver}).

We now prove that the rank of a matrix can only decrease under the quotient map $\psi_{p^k}$.

\begin{lem}\label{lem:quoRank}
If $A$ is a matrix with entries in $T_{\ell}^k$, then we have the following bound,
{\em$$\rank_{\Q(\zeta)} A  \ge \rank_{\F_p} \psi_{p^k}(A),$$}
where $\psi_{p^k}(A)$ is the matrix with entries in $\overline{T}_{\ell}$ obtained by applying $\psi_{p^k}$ to each entry of $A$.  
\end{lem}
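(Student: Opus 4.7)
The plan is to pass to coefficient matrices, reducing the claim to a rank comparison for a matrix whose entries lie in $\Z(\zeta)$ under the quotient $\psi_{p^k}\colon \Z(\zeta) \to \F_p$, and then to finish by a standard minor argument.

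First I would extend the $\Coeff$ construction of Section~\ref{sec:PreDD} to matrices over the ring $\Z(\zeta)[z]/\langle z^{p^\ell}-1\rangle$ (equivalently, over $\Q(\zeta)[z]/\langle z^{p^\ell}-1\rangle$ by inclusion): every element has a unique polynomial representative of degree less than $p^\ell$, whose coefficients lie in $\Z(\zeta)$, and $\Coeff(A)$ stacks these as rows just as in the original definition. Fact~\ref{lem:coeffR} applied with $\F = \Q(\zeta)$ then gives $\rank_{\Q(\zeta)}(A) = \rank_{\Q(\zeta)}(\Coeff(A))$, while applied with $\F = \F_p$ to $\psi_{p^k}(A)$ it gives $\rank_{\F_p}(\psi_{p^k}(A)) = \rank_{\F_p}(\Coeff(\psi_{p^k}(A)))$. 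Since $\psi_{p^k}$ fixes $z$ and acts entrywise on polynomial coefficients, it commutes with $\Coeff$, i.e. $\Coeff(\psi_{p^k}(A)) = \psi_{p^k}(\Coeff(A))$, where on the right $\psi_{p^k}$ is applied entrywise to a matrix over $\Z(\zeta)$.

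The lemma is now equivalent to showing $\rank_{\Q(\zeta)}(M) \ge \rank_{\F_p}(\psi_{p^k}(M))$ for any matrix $M$ with entries in $\Z(\zeta)$. Let $r$ denote the right-hand side; then $\psi_{p^k}(M)$ contains some $r \times r$ submatrix whose determinant $\delta' \in \F_p$ is nonzero. Since the determinant is polynomial in the entries and $\psi_{p^k}$ is a ring homomorphism, $\delta' = \psi_{p^k}(\delta)$, where $\delta \in \Z(\zeta)$ is the determinant of the corresponding $r \times r$ submatrix of $M$. From $\psi_{p^k}(\delta) \ne 0$ we conclude $\delta \ne 0$ in $\Z(\zeta)$, and since $\Z(\zeta)$ is an integral domain with fraction field $\Q(\zeta)$, also $\delta \ne 0$ in $\Q(\zeta)$. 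Hence $M$ has an $r \times r$ submatrix invertible over $\Q(\zeta)$, giving $\rank_{\Q(\zeta)}(M) \ge r$ as required.

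Conceptually nothing here is deep; the one point to check carefully is the commutation identity $\Coeff(\psi_{p^k}(A)) = \psi_{p^k}(\Coeff(A))$, since it relies on the (elementary but essential) observation that applying $\psi_{p^k}$ coefficientwise to the unique degree-less-than-$p^\ell$ representative of an entry of $A$ produces the unique degree-less-than-$p^\ell$ representative of the corresponding entry of $\psi_{p^k}(A)$ in $\overline{T}_\ell$.
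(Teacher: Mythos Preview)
Your proposal is correct and takes essentially the same approach as the paper: pass to the coefficient matrix, use the commutation $\Coeff(\psi_{p^k}(A))=\psi_{p^k}(\Coeff(A))$, and finish with a determinant/minor argument over the ring homomorphism $\psi_{p^k}$. The only cosmetic difference is direction: the paper sets $r=\rank_{\Q(\zeta)}A$ and shows all $(r+1)\times(r+1)$ minors of $\Coeff(A)$ vanish and hence so do their images under $\psi_{p^k}$, whereas you set $r=\rank_{\F_p}\psi_{p^k}(A)$ and pull back a nonvanishing $r\times r$ minor---these are contrapositives of one another.
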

In \cite{arsovski2021padic} the above lemma is implicit in the proof of Proposition 4, Arsovski's proof uses tools from $p$-adic analysis. We provide an alternate proof without using those techniques. 
\begin{proof}[Proof of Lemma~\ref{lem:quoRank}]
Let $A$ be a matrix with entries in $T_{\ell}^k=\Z(\zeta)[z]/\langle z^{p^\ell}-1\rangle$. $T^k_\ell$ is a sub-ring of the ring $\Q(\zeta)[z]/\langle z^{p^\ell}-1\rangle$. As $\Q(\zeta)$ is a field we can define $\Coeff(A)$ with entries in $\Q(\zeta)$. 

This also means $\psi_{p^k}$ can be applied on $\Coeff(A)$. On the other hand $\psi_{p^k}$ can be directly applied on $A$ as it has entries in $T_\ell^k$ and $\psi_{p^k}(A)$ will have entries in $\overline{T}_\ell = \F_p[z]/\langle z^\ell -1 \rangle$. This means we can define $\Coeff(\psi_{p^k}(A))$ with entries in $\F_p$. Notice that $\psi_{p^k}(\Coeff(A))=\Coeff(\psi_{p^k}(A))$.

Let $\rank_{\Q(\zeta)} A=r$. By Lemma~\ref{lem:coeffR} and simple properties of the determinant, we have that every $r+1\times r+1$ sub-matrix $M$ of $\Coeff(A)$ has $0$ determinant. As $\psi_{p^k}$ is a ring homomorphism it immediately follows that the corresponding $r+1\times r+1$ sub-matrix $\psi_{p^k}(M)$ in $\psi_{p^k}(\Coeff(A))=\Coeff(\psi_{p^k}(A))$ will have $0$ determinant too. 

This implies that $\Coeff(\psi_{p^k}(A))$ has rank at most $r$. The statement now follows from applying Lemma~\ref{lem:coeffR} on $\psi_{p^k}(A)$.  
\end{proof}

We now define the `Vandermonde' matrix (corresponding to $M$ in Section~\ref{sec:proofOver}) with entries in $\overline{T}_{\ell}$ which was defined by Arsovski in \cite{arsovski2021padic} and whose $\F_p$-rank will help us lower bound the size of Kakeya Sets.

\begin{define}[The matrix $M_{p^\ell,n}$]
The matrix $M_{p^\ell,n}$ is a matrix over $\overline{T}_{\ell}$ with its rows and columns indexed by points in $(\Z/p^\ell\Z)^n$. The $(u,v)\in (\Z/p^\ell\Z)^n\times (\Z/p^\ell\Z)^n$ entry is
$$M_{p^\ell,n}(u,v)=z^{\langle u, v\rangle}.$$
\end{define}

\begin{lem}[Rank of $M_{p^\ell,n}$]\label{lem:1rank}
The $\F_p$-rank of $M_{p^\ell,n}$ is at least 
{\em $$\text{rank}_{\F_p} M_{p^\ell,n} \ge \binom{\lceil p^\ell \ell^{-1} \rceil +n}{n}.$$}
\end{lem}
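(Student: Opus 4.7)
The plan is to exhibit $\binom{D+n}{n}$ columns of $M_{p^\ell,n}$, where $D = \lceil p^\ell/\ell\rceil$, that are $\F_p$-linearly independent. I take as column indices the simplex $V = \{v \in \Z_{\geq 0}^n : v_1 + \cdots + v_n \leq D\}$, embedded in $(\Z/p^\ell\Z)^n$ via the representatives $\{0,1,\ldots,p^\ell-1\}$. Suppose a nontrivial $\F_p$-combination $\sum_{v \in V} c_v z^{\langle u, v\rangle} = 0$ vanishes in $\overline{T}_\ell$ for every $u \in (\Z/p^\ell\Z)^n$, with $c_v \in \F_p$. Package the coefficients as $P(x_1,\ldots,x_n) := \sum_{v \in V} c_v x_1^{v_1}\cdots x_n^{v_n} \in \F_p[x_1,\ldots,x_n]$ of total degree at most $D$; the hypothesis becomes $P(z^{u_1},\ldots,z^{u_n}) = 0$ in $\overline{T}_\ell$ for all $u$.

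Via the isomorphism $\overline{T}_\ell \cong \F_p[y]/\langle y^{p^\ell}\rangle$ given by $z \mapsto 1+y$, and the affine change of variable $x_i = 1 + w_i$, this converts to $Q(\alpha_1, \ldots, \alpha_n) \equiv 0 \pmod{y^{p^\ell}}$ for every $u$, where $Q(w) := P(1+w_1,\ldots,1+w_n) \in \F_p[w_1,\ldots,w_n]$ still has total degree $\leq D$ and $\alpha_i := (1+y)^{u_i} - 1$. The pivotal characteristic-$p$ identity $(1+y)^{p^j} = 1 + y^{p^j}$ lets me control the $y$-adic order of $\alpha_i$ precisely: choosing $u_i = s_i p^{j_i}$ with $s_i \in \F_p^\times$ and $j_i \in \{0,1,\ldots,\ell-1\}$ produces $\alpha_i = s_i y^{p^{j_i}} + O(y^{2p^{j_i}})$, so that $\alpha^k := \prod_i \alpha_i^{k_i}$ has leading term $(\prod_i s_i^{k_i})\,y^{\sum_i k_i p^{j_i}}$. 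Writing $Q(w) = \sum_k \hat c_k w^k$, the relation $Q(\alpha) \equiv 0 \pmod{y^{p^\ell}}$ supplies, for each $\sigma < p^\ell$, the constraint $\sum_k \hat c_k [y^\sigma]\alpha^k = 0$.

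An induction on $k$ in an order respecting the $y$-adic filtration then eliminates each coefficient $\hat c_k$: for each $k$ I select the pattern $(j_i)$ so that $\sigma := \sum_i k_i p^{j_i}$ is the first exponent at which $w^k$ contributes its leading term (inductively earlier monomials having been killed), and then vary $s \in (\F_p^\times)^n$ and use $\F_p$-linear independence of the $s^k$'s to peel off $\hat c_k = 0$. The degree bound $|k| \leq D = \lceil p^\ell/\ell\rceil$ is exactly the combinatorial condition guaranteeing that the requisite $\sigma$'s stay below $p^\ell$ throughout the induction. The main obstacle is the careful bookkeeping: specifying the order of elimination of multi-indices, matching each $k$ with its $(j_i)$-pattern, and accounting for the subleading contributions of $\alpha^{k'}$ with lower $|k'|$ to $[y^\sigma]Q(\alpha)$ so that they can be absorbed at the right stage. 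The argument is essentially a faithful translation of Arsovski's $p$-adic polynomial method into the language of $\overline{T}_\ell = \F_p[y]/\langle y^{p^\ell}\rangle$.
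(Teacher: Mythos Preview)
Your approach is genuinely different from the paper's, but as written it has real gaps.

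\textbf{What the paper does.} The paper (Appendix~\ref{ap:rk}) invokes Arsovski's factorisation $V_{p^\ell}=L_{p^\ell}D_{p^\ell}$ with $L$ unipotent lower-triangular over $\Z[z]$ and $D$ upper-triangular with explicit diagonal $\prod_{i=0}^{j-1}(z^j-z^i)$. Passing to $\overline T_\ell$ and tensoring, $M_{p^\ell,n}=\overline L^{\otimes n}\overline D^{\otimes n}$; since $\overline L$ is invertible over $\overline T_\ell$, the $\F_p$-rank of $M_{p^\ell,n}$ is at least the number of nonzero diagonal entries of $\overline D^{\otimes n}$. Writing $z=1+w$, the $(j_1,\dots,j_n)$-entry is (up to units) $\prod_i\prod_{t=1}^{j_i}((1+w)^t-1)$, whose $w$-adic valuation is $\sum_i\sum_{t=1}^{j_i}p^{v_p(t)}$ by Lucas. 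A short estimate bounds this by $(\sum_i j_i)(\ell-(\ell-1)/p)$, which is $<p^\ell$ whenever $\sum_i j_i\le\lceil p^\ell/\ell\rceil$. That count gives the stated binomial.

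\textbf{Where your plan breaks.} Two concrete problems:
\begin{enumerate}
\item Your ``peel off $\hat c_k$ by varying $s\in(\F_p^\times)^n$'' step does not work as stated. The functions $s\mapsto s^k$ on $(\F_p^\times)^n$ depend only on $k\bmod(p-1)$ coordinatewise, so distinct monomials with $k_i\equiv k_i'\pmod{p-1}$ are indistinguishable this way. For instance with $n=2$, $k=(p-1,1)$ and $k'=(0,p)$ both give $s^k=s^{k'}=s_2$, and for $j_1=j_2$ both have leading $y$-exponent $p^{j_1+1}$; your test at that $\sigma$ only sees $\hat c_k+\hat c_{k'}$.
\item Your stated reason for the threshold $D=\lceil p^\ell/\ell\rceil$ is wrong. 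Taking all $j_i=0$ gives $\sigma=|k|$, which is trivially $<p^\ell$ for every $|k|\le D$; so ``keeping $\sigma<p^\ell$'' places no constraint at all and cannot be the source of the bound. In the paper the bound $D$ arises from the valuation estimate $\sum_{t\le j}p^{v_p(t)}\lesssim j\ell$, i.e.\ from Lucas, and your sketch contains no analogue of this computation.
\end{enumerate}
The upshot is that the heart of the argument --- an explicit scheme assigning to each $k$ with $|k|\le D$ a pattern $(j_i)$ and an exponent $\sigma<p^\ell$ so that the resulting linear system in the $\hat c_k$'s is triangular --- is entirely missing, and your partial indications for how to build it are incorrect. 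Such a scheme may well exist (indeed the paper's LU decomposition is precisely a packaged version of that Gaussian elimination), but producing it requires the same Lucas-type valuation input the paper uses; without it the proposal is only a heuristic.
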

A lower bound of $p^{\ell n} (\ell n)^{-n}$ is proven within Lemma 5 and Theorem 2 in \cite{arsovski2021padic}. The same argument can also give the bound above with a slightly more careful analysis. For completeness, we give a proof in Appendix \ref{ap:rk}. For convenience, we now prove that removing the rows in $M_{p^\ell,n}$ corresponding to elements $u\in (\Z/p^\ell\Z)^n\setminus\P (\Z/p^\ell\Z)^{n-1}$ does not change the $\F_p$-rank of the matrix. 

For a given set of elements $V\subseteq (\Z/p^\ell\Z)^{n}$ let $M_{p^\ell,n}(V)$ refer to the sub-matrix obtained by restricting to rows of $M_{p^\ell,n}$ corresponding to elements in $V$. In particular, for any given $u\in R^n$ we let $M_{p^k,n}(u)$ refer to the $u$'th row of the matrix.

\begin{lem}\label{lem:resRk}
{\em $$\text{rank}_{\F_p} M_{p^\ell,n}(\P (\Z/p^\ell\Z)^{n-1}) = \text{rank}_{\F_p} M_{p^\ell,n}.$$}
\end{lem}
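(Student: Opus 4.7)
The plan is to prove the non-trivial direction $\rank_{\F_p} M_{p^\ell, n}(\P (\Z/p^\ell\Z)^{n-1}) \ge \rank_{\F_p} M_{p^\ell, n}$ by showing that any $\F_p$-linear relation among the columns that vanishes on every row indexed by $\P$ in fact vanishes on every row. Concretely, given coefficients $c_v \in \F_p$, set $f(u) = \sum_v c_v \, z^{\langle u, v \rangle} \in \overline{T}_\ell$, and deduce $f \equiv 0$ from $f|_{\P} \equiv 0$. Two symmetries of $f$ drive the argument: a Galois-type identity under unit scaling, and a Frobenius identity under multiplication by $p$.

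The first symmetry extends vanishing from $\P$ to all vectors with at least one unit coordinate. For each unit $\lambda \in (\Z/p^\ell\Z)^*$, the map $z \mapsto z^\lambda$ descends to an $\F_p$-algebra automorphism $\phi_\lambda$ of $\overline{T}_\ell$ (since $z^{\lambda p^\ell} - 1$ is divisible by $z^{p^\ell} - 1$). Because $\phi_\lambda$ fixes each $c_v$ and sends $z^{\langle u, v\rangle}$ to $z^{\langle \lambda u, v\rangle}$, one has $f(\lambda u) = \phi_\lambda(f(u))$, so $f(u)=0$ forces $f(\lambda u)=0$. Every vector $u$ with a unit coordinate is a unit multiple of some representative in $\P$, so $f$ vanishes on the set $\widetilde{\P}$ of all such vectors.

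The second and crucial symmetry is the Frobenius identity $f(pu) = f(u)^p$, which follows from $(\sum_v x_v)^p = \sum_v x_v^p$ in characteristic $p$ together with $c_v^p = c_v$ for $c_v \in \F_p$; iterating gives $f(p^i u^*) = f(u^*)^{p^i}$ for all $i \ge 0$. Every $u \in (\Z/p^\ell\Z)^n$ can be written as $p^i u^*$ for some $u^* \in \widetilde{\P}$ and some $i \in \{0, 1, \ldots, \ell\}$ (including $u = 0$, via $0 = p^\ell e_1$), so we conclude $f(u) = f(u^*)^{p^i} = 0^{p^i} = 0$.

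The only non-routine conceptual point is spotting the Frobenius identity $f(pu) = f(u)^p$; once it is noticed, everything else reduces to a clean symmetry argument. A small consistency check worth carrying out is that $f(u')^p$ depends only on $u' \pmod{p^{\ell-1}}$, which holds because $z^{p^{\ell-1}} - 1 = (z-1)^{p^{\ell-1}}$ in $\overline{T}_\ell = \F_p[z]/(z-1)^{p^\ell}$, so $f(u') - f(u'')$ is divisible by $(z-1)^{p^{\ell-1}}$ whenever $u' \equiv u'' \pmod{p^{\ell-1}}$ and hence has vanishing $p$-th power.
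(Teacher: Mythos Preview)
Your proof is correct and rests on the same two observations as the paper's: the substitutions $z\mapsto z^\lambda$ (for a unit $\lambda$) and $z\mapsto z^p$ are $\F_p$-linear endomorphisms of $\overline{T}_\ell$ carrying the row indexed by $u$ to the rows indexed by $\lambda u$ and $pu$, respectively. The paper works on the row side via $\Coeff$ and these linear maps, whereas you take the dual column-relation viewpoint and package the $z\mapsto z^p$ step as the Frobenius identity $f(pu)=f(u)^p$; the content is the same.
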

\begin{proof}
Consider the matrix $\Coeff(M_{p^\ell,n})$. For any $u\in (\Z/p^\ell\Z)^n$, the $u$'th row in $M_{p^\ell,n}$ will correspond to a $p^\ell$ block of rows $B_u$ in $\Coeff(M_{p^\ell,n})$. 
Say $u$ doesn't have a unit coordinate. We can find an element $u' \in \P (\Z/p^\ell \Z)^{n-1}$ such that for some $i$, $p^iu'=u$. We claim that the block $B_u$ can be generated by the block $B_{u'}$ via a linear map. This follows because given the coefficient vector of a polynomial $f(z)$ in $\overline{T}_{\ell}$ the coefficient vector of the polynomial $f(z^p)\in \overline{T}_\ell$ can be obtained via a $\F_p$-linear map. This shows that the span of the rows of $\Coeff(M_{p^\ell,n})$ is identical to the span of the rows in $\Coeff(M_{p^\ell,n}(V)$ where $V$ is the set of vectors in $(\Z/p^k\Z)^n$ with at least one unit co-ordinate.

Now for any element $u\in V$ we can find an element $u'\in \P (\Z/p^\ell \Z)^{n-1}$ such that there exists a $\lambda \in (\Z/p^k\Z)^\times$ for which $u=\lambda u'$. We now note that the block $B_u$ can be generated by the block $B_{u'}$ via a linear map as the coefficient vector of a polynomial $f(z^{\lambda})\in \overline{T}_{\ell}$ can be linearly computed from $f(z)\in \overline{T}_{\ell}$. This shows that the span of the rows of $\Coeff(M_{p^\ell,n}(V))$ is identical to the span of the rows in $\Coeff(M_{p^\ell,n}(\P (\Z/p^k\Z)^{n-1})$ completing the proof using Lemma~\ref{lem:coeffR}.
\end{proof}

In \cite{arsovski2021padic} a row vector which encodes the evaluation of monomials over points in a line in direction $u\in \P (\Z/p^k\Z)^{n-1}$ is used to decode the $u$'th row of $M_{p^k,n}$. We extend this method by using row vectors which encode the evaluations of Hasse derivative of a monomial over a given point. We first define Hasse derivative.

\begin{define}[Hasse Derivatives]
Given a polynomial $f\in \F[x_1,\hdots,x_n]$ for any field $\F$ and an $\balpha\in \Z_{\ge 0}^n$ the $\balpha$'th {\em Hasse derivative} of $f$ is the polynomial $f^{(\balpha)}$ in the expansion $f(x+z)=\sum_{\bbeta\in \Z_{\ge 0}^n} f^{(\bbeta)}(x)z^{\bbeta}$ where $x=(x_1,...,x_n)$, $z=(z_1,...,z_n)$ and $z^{\bbeta}=\prod_{k=1}^n z_k^{\beta_k}$.  
\end{define}

\begin{define}[The evaluation vector $U_{d}^{(\balpha)}(y)$]
Let $\F$ be a field, $n,d\in \Z$ and $\balpha\in \Z_{\ge 0}^n$. For any given point $y\in \F^n$ we define $U_d^{(\balpha)}(y)$ to be a row vector of size $d^n$ whose columns are indexed by monomials $m=x_1^{j_1}x_2^{j_2}\hdots x_n^{j_n}\in \F[x_1,\hdots,x_n]$ for $j_k\in \{0,\hdots,d-1\},k\in [n]$ such that its $m$'th column is $m^{(\balpha)}(y)$. 
\end{define}

We suppress $n$ in the notation as there will be no ambiguity over it in this paper. We define the weight of $\balpha\in \Z_{\ge 0}^n$ as  $\text{wt}(\balpha)=\sum_{k=1}^n \alpha_k$. The following simple fact about uni-variate polynomials illustrates one use of Hasse derivatives and how they correspond to our intuition with regular derivatives in fields of characteristic $0$.

\begin{lem}\label{fact:Hasse}
Let $\F$ be a field, $y\in \F$ and $w\in \N$. For any uni-variate polynomial $f\in\F[x]$ all its Hasse derivatives at the point $y$ of weight strictly less than $w$ are $0$ if and only if $f\in \langle (x-y)^w\rangle$ or in other words $f$ is divisible by $(x-y)^w$.
\end{lem}

Let $L\subseteq (\Z/p^k\Z)^n$ be a line in direction $u\in \P (\Z/p^k\Z)^{n-1}$. A special case of the next lemma proves that, for any polynomial $f\in \Z[x_1,\hdots,x_n]$, we can evaluate $f(z^{u})\in \overline{T}_k=\F_p[z]/\langle z^{p^k}-1\rangle,z^{u}=(z^{u_1},\hdots,z^{u_n})$ from the evaluation of $f$ on the points $f(\zeta^x),x\in L$. This is implicit in the proof of Proposition 4 in \cite{arsovski2021padic}. We generalize this statement. Let $\pi: L \rightarrow \Z_{\ge 0}$ be a function on the line such that $\sum_{x\in L}\pi(x)\ge p^\ell$. We prove that we can decode $f(z^{u'})\in \overline{T}_{\ell}=\F_p[z]/\langle z^{p^\ell}-1\rangle$ from the evaluations of weight at most $\pi(x)$ Hasse derivatives of $f$ at $\zeta^x,x\in L$ for any $u'$ in $(\Z/p^\ell\Z)^n$ such that $u'\text{ (mod }p^k\text{)}=u$.

The lemma below can be thought of as analogous to how over finite fields the evaluation of a polynomial and its Hasse derivatives with high enough weight along a line in direction $u$ can be used to decode the evaluation of that polynomial at the point at infinity along $u$~\cite{dvir2013extensions}.

\begin{lem}[Decoding from evaluations on rich lines]\label{lem:deRich}
Let $L=\{a+\lambda u| \lambda \in \Z/p^k\Z\}\subset (\Z/p^k\Z)^n$ with $a\in (\Z/p^k\Z)^n,u\in \P (\Z/p^k\Z)^{n-1}$, $u'\in (\Z/p^\ell\Z)^n$ be such that {\em $u'\text{ (mod }p^k\text{)}=u$} and $\pi:L\rightarrow \Z_{\ge 0}$ be a function which satisfies $\sum_{x\in L} \pi(x) \ge p^\ell$.

Then, there exist elements $c_{\lambda,\balpha} \in \Q(\zeta)[z]$ (depending on $\pi,L$ and $u'$) for $\lambda \in \Z/p^k\Z$ and $\balpha\in  \Z_{\ge 0}^n$ with $\text{wt}(\balpha)< \pi(a+\lambda u)$ such that the following holds for all polynomials $f\in \Z[x_1,\hdots,x_n]$,

{\em$$ \psi_{p^k}\left(\sum\limits_{\lambda =0}^{p^k-1}\sum\limits_{\text{wt}(\balpha) < \pi(a+\lambda u) }c_{\lambda,\balpha} f^{(\balpha)}(\zeta^{a+\lambda u})\right)= f(z^{u'})\in \overline{T}_\ell .$$}
\end{lem}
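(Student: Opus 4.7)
The plan is to repackage the Hasse derivative data of $f$ along $L$ into Hasse derivative data for a single univariate polynomial at the $p^k$-th roots of unity, reconstruct that polynomial modulo $z^{p^\ell}-1$ by Hermite interpolation over the field $\Q(\zeta)$, and then apply $\psi_{p^k}$. The arithmetic engine will be the divisibility $p\mid\binom{p^\ell}{\rho}$ for $1\le\rho\le p^\ell-1$, which identifies the two natural candidate polynomials modulo $(p,\zeta-1)$.

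I would first introduce the univariate polynomial $F_{u'}(z):=f(\zeta^{a_1}z^{u'_1},\dots,\zeta^{a_n}z^{u'_n})\in T_\ell^k$. Since $\psi_{p^k}(\zeta^{a_i})=1$ and $\psi_{p^k}(z)=z$, Corollary~\ref{cor:psiE} yields $\psi_{p^k}(F_{u'}(z))=f(z^{u'})\in\overline{T}_\ell$, so it suffices to express $F_{u'}(z)$ in $T_\ell^k$ as a $\Q(\zeta)[z]$-combination of the values $f^{(\balpha)}(\zeta^{a+\lambda u})$. Because $u'\equiv u\pmod{p^k}$ and $\zeta^{p^k}=1$ force $\zeta^{\lambda u'_i}=\zeta^{\lambda u_i}$, substituting $z=\zeta^\lambda(1+t)$ and applying the multivariate Hasse expansion of $f$ at $\zeta^{a+\lambda u}$ gives
$$F_{u'}(\zeta^\lambda(1+t))=f\bigl(\zeta^{a+\lambda u}\cdot(1+t)^{u'}\bigr)=\sum_{\balpha}f^{(\balpha)}(\zeta^{a+\lambda u})\prod_{i=1}^n\Bigl(\zeta^{a_i+\lambda u_i}\bigl[(1+t)^{u'_i}-1\bigr]\Bigr)^{\alpha_i}.$$
Comparing with the tautology $[t^\sigma]F_{u'}(\zeta^\lambda(1+t))=\zeta^{\lambda\sigma}F_{u'}^{(\sigma)}(\zeta^\lambda)$ and noting that each factor $(1+t)^{u'_i}-1$ has $t$-valuation at least one exhibits each $F_{u'}^{(\sigma)}(\zeta^\lambda)$ as an explicit $\Z(\zeta)$-linear combination of the $f^{(\balpha)}(\zeta^{a+\lambda u})$ with $\text{wt}(\balpha)\le\sigma$.

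After possibly discarding excess derivatives so that $\sum_\lambda\pi(a+\lambda u)=p^\ell$, standard Hermite interpolation over $\Q(\zeta)$ at the distinct points $\zeta^\lambda$ with multiplicities $\pi(a+\lambda u)$ provides basis polynomials $B_{\lambda,\sigma}(z)\in\Q(\zeta)[z]$ of degree less than $p^\ell$ such that every degree-$<p^\ell$ polynomial $R$ obeys $R(z)=\sum_{\lambda,\sigma}B_{\lambda,\sigma}(z)R^{(\sigma)}(\zeta^\lambda)$. Applying this to the degree-$<p^\ell$ representative $P\in\Z(\zeta)[z]$ of $F_{u'}\bmod(z^{p^\ell}-1)$, and then plugging in the Step-2 formulas for the $F_{u'}^{(\sigma)}(\zeta^\lambda)$, produces the required $c_{\lambda,\balpha}(z)\in\Q(\zeta)[z]$.

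The technical heart of the argument is the mismatch between $P^{(\sigma)}(\zeta^\lambda)$, which feeds the Hermite formula, and $F_{u'}^{(\sigma)}(\zeta^\lambda)$, which is what we can actually compute from $f$. Writing $F_{u'}-P=Q\cdot(z^{p^\ell}-1)$ for some $Q\in\Z(\zeta)[z]$ and applying the product rule for Hasse derivatives at $\zeta^\lambda$, the $\rho=0$ term vanishes thanks to $\zeta^{\lambda p^\ell}=1$ (using $\ell\ge k$), leaving
$$F_{u'}^{(\sigma)}(\zeta^\lambda)-P^{(\sigma)}(\zeta^\lambda)=\sum_{\rho=1}^{\sigma}\binom{p^\ell}{\rho}\zeta^{-\lambda\rho}\,Q^{(\sigma-\rho)}(\zeta^\lambda).$$
For every $\sigma<p^\ell$ each surviving binomial coefficient is divisible by $p$ by Lucas's theorem, so the discrepancy lies in $p\,\Z(\zeta)$. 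The main obstacle will be verifying that, although the $B_{\lambda,\sigma}(z)$ individually carry denominators involving powers of $(1-\zeta)$, the total sum of corrections remains in $\Z(\zeta)[z]/\langle z^{p^\ell}-1\rangle$ and lies in the kernel of $\psi_{p^k}$; once this $p$-adic bookkeeping is in place, replacing $P^{(\sigma)}$ by $F_{u'}^{(\sigma)}$ in the Hermite identity and applying $\psi_{p^k}$ still produces $f(z^{u'})\in\overline{T}_\ell$, as required.
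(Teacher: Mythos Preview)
Your overall plan---pass to a single univariate polynomial along the line, express its Hasse derivatives at the $p^k$-th roots of unity in terms of the multivariate Hasse derivatives of $f$, and then Hermite-interpolate---is exactly the paper's strategy. Your Step~2 is the content of the paper's intermediate claim (proved there via the chain rule for Hasse derivatives), and your Hermite step is the paper's use of the evaluation/interpolation isomorphism.

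Where you diverge is in the bookkeeping at the end, and there your outline has a genuine gap. You interpolate ``the degree-$<p^\ell$ representative $P$ of $F_{u'}\bmod(z^{p^\ell}-1)$'' and then propose to swap in the values $F_{u'}^{(\sigma)}(\zeta^\lambda)$, controlling the discrepancy term-by-term via $p\mid\binom{p^\ell}{\rho}$. That term-by-term route is unlikely to close: the individual Hermite basis polynomials $B_{\lambda,\sigma}$ carry denominators that are products of powers of $\zeta^\lambda-\zeta^\mu$, hence large powers of $(1-\zeta)$, while a single factor of $p$ is only $(1-\zeta)^{p^{k-1}(p-1)}$ up to a unit. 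There is no reason each summand $B_{\lambda,\sigma}(z)\cdot E_{\lambda,\sigma}$ should land in $\Z(\zeta)[z]$, and the cancellation that makes the total land there is invisible at that granularity.

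The paper sidesteps this entirely by never introducing $P$ or $z^{p^\ell}-1$ upstairs. It works modulo
\[
h(z)=\prod_{\lambda}(z-\zeta^\lambda)^{\pi(a+\lambda u)}\in\Z(\zeta)[z],
\]
so that Hermite interpolation is \emph{exact}: the interpolant of the data $\{F_{u'}^{(\sigma)}(\zeta^\lambda)\}$ is precisely $F_{u'}(z)\bmod h(z)$, which lies in $\Z(\zeta)[z]$ by monic division---no denominators to chase. The single observation replacing all of your ``$p$-adic bookkeeping'' is then
\[
\psi_{p^k}(h(z))=(z-1)^{\sum_\lambda\pi(a+\lambda u)}=(z-1)^{p^\ell}=z^{p^\ell}-1\ \in\ \F_p[z],
\]
so $\psi_{p^k}$ descends to a ring homomorphism $\Z(\zeta)[z]/\langle h(z)\rangle\to\overline{T}_\ell$ and sends the interpolant directly to $f(z^{u'})$. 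This is the same Frobenius-type fact you invoke as $p\mid\binom{p^\ell}{\rho}$, but applied once at the level of the modulus rather than termwise; that reorganization is exactly what makes the argument go through without any valuation analysis.
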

For the application of $\psi_{p^k}$ in the statement of the lemma to make sense we must have its input be an element in $T_\ell^k$. In other words, we need the input of $\psi_{p^k}$ to be a polynomial in $z$ with coefficients in $\Z(\zeta)$. This is indeed the case and will be part of the proof of the lemma. We need two simple facts for the proof.

\begin{lem}[Hasse Derivatives of composition of two functions]\label{fact:com}
Let $\F$ be a field, $n\in \N$. Given a tuple of polynomials $C(y)=(C_1(y),C_2(y),\hdots,C_n(y))\in (\F[y])^n$, $w\in \N$ and $\gamma\in \F$ there exists a set of coefficients $b_{w,\balpha}\in \F$ (which depend on $C$ and $\gamma$) where $\balpha \in \Z^n_{\ge 0}$ such that for any $f\in \F[x_1,\hdots,x_n]$ we have,
$$h^{(w)}(\gamma)=\sum\limits_{\text{wt}(\balpha)\le w} b_{w,\balpha} f^{(\balpha)}(C_1(\gamma),\hdots,C_n(\gamma)), $$
where $h(y)=f(C_1(y),\hdots,C_n(y))$. 

\end{lem}

This fact follows easily from the definition of the Hasse derivative. A proof can also be found in Proposition 6 of \cite{dvir2013extensions}.\footnote{The idea is that $C_i(\gamma+z)$ can be expanded in terms of its Hasse derivatives. We use that in $f(C_1(\gamma+z),\hdots,C_n(\gamma+z))$ and expand it again using the Hasse derivative expansion of $f$ at $(C_1(\gamma),\hdots,C_n(\gamma))$. The coefficient of $z^w$ is the Hasse derivative $h^{(w)}(\gamma)$ and it will only get contributions from $f^{(\balpha)}$ with $\text{wt}(\balpha)\le w$.} We also need another fact about the isomorphism between polynomials and the evaluations of their derivatives at a sufficiently large set of points.

\begin{lem}[Computing polynomial coefficients from polynomial evaluations]\label{fact:eval}
Let $\F$ be a field and $n\in \N$. Given distinct $a_i\in \F$ and $m_i\in \Z_{\ge 0}$, let $h(y)=\prod_{i=1}^n (y-a_i)^{m_i}\in \F[y]$. We have an isomorphism between
$$\frac{\F[z]}{\left\langle h(z)\right\rangle } \longleftrightarrow \F^{\sum\limits_{i=1}^n m_i},$$
which maps every polynomial $f\in \F[z]/\langle h(z)\rangle$ to the evaluations $(f^{(j_i)}(a_i))_{i,j_i}$ where $i\in \{1,\hdots,n\}$ and $j_i\in \{0,\hdots,m_i-1\}$. 
\end{lem}

This is a simple generalization of Lemma~\ref{fact:Hasse}. It can be proven in several ways, for example it can be proven using Lemma~\ref{fact:Hasse} and the Chinese remainder theorem for the ring $\F[y]$. To prove Lemma~\ref{lem:deRich} we will need the following corollary of the fact above.

\begin{cor}[Computing a polynomial from its evaluations]\label{cor:eval}
Let $\F$ be a field and $n\in \N$. Given distinct $a_i\in \F$ and $m_i\in \Z_{\ge 0}$, let $h(y)=\prod_{i=1}^n (y-a_i)^{m_i}\in \F[y]$. Then there exists coefficients $t_{i,j}\in \F[z]$ (depending on $h$) for $i\in \{1,\hdots,n\},j\in \{0,\hdots,m_i-1\}$ such that for any $f(y)\in \F[y]$ we have,
$$\sum\limits_{i=1}^n \sum\limits_{j=0}^{m_i-1} t_{i,j} f^{(j)}(a_i)=f(z)  \text{ {\em (mod }} h(z)\text{{\em )}} \in  \F[z]/\langle h(z)\rangle.$$
\end{cor}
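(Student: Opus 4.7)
The plan is to deduce Corollary~\ref{cor:eval} directly from Fact~\ref{fact:eval} together with Fact~\ref{fact:Hasse}. Fact~\ref{fact:eval} already identifies $\F[z]/\langle h(z)\rangle$ with $\F^{\sum m_i}$ via the evaluation map $\Phi(f) = (f^{(j_i)}(a_i))_{i, j_i}$; what Corollary~\ref{cor:eval} asks for is essentially an explicit expression of the inverse of $\Phi$ as an $\F$-linear combination, applied to $f$ itself rather than to its reduction modulo $h$.

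First I would define, for each pair $(i,j)$ with $i \in \{1,\dots,n\}$ and $j \in \{0,\dots,m_i-1\}$, the polynomial $t_{i,j}(z) \in \F[z]$ to be the (unique) representative of degree strictly less than $\deg h$ of $\Phi^{-1}(e_{i,j})$, where $e_{i,j}$ is the standard basis vector picking out the $(i,j)$ coordinate in $\F^{\sum m_i}$. Concretely this means $t_{i,j}^{(j')}(a_{i'}) = \delta_{(i,j),(i',j')}$ for all admissible $(i',j')$; existence and uniqueness are immediate from Fact~\ref{fact:eval}. By $\F$-linearity of $\Phi^{-1}$, every $g \in \F[z]/\langle h(z)\rangle$ satisfies
\[
g(z) \;=\; \sum_{i=1}^n \sum_{j=0}^{m_i-1} g^{(j)}(a_i)\, t_{i,j}(z) \quad\text{in } \F[z]/\langle h(z)\rangle.
\]

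The remaining point is to promote this identity from $g \in \F[z]/\langle h(z)\rangle$ to an arbitrary $f(y) \in \F[y]$. For this I would write $f = q \cdot h + \bar f$ with $\deg \bar f < \deg h$, so that $\bar f$ represents the class of $f$ in $\F[z]/\langle h(z)\rangle$. Since $h(y) = \prod_{i=1}^n (y-a_i)^{m_i}$ is divisible by $(y-a_i)^{m_i}$ for every $i$, the product $q(y)h(y)$ lies in $\langle (y-a_i)^{m_i}\rangle$, and Fact~\ref{fact:Hasse} then gives $(qh)^{(j)}(a_i) = 0$ for all $j < m_i$. Therefore $f^{(j)}(a_i) = \bar f^{(j)}(a_i)$ for every relevant $(i,j)$, and applying the previous display to $g = \bar f$ yields exactly the claimed identity in $\F[z]/\langle h(z)\rangle$.

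There is really no hard step here: the content is in Fact~\ref{fact:eval}, and the only thing to verify is that passing from $f$ to its reduction $\bar f$ modulo $h$ does not alter the low-weight Hasse derivatives at the $a_i$. If any step requires care, it is this last check, but it follows in one line from Fact~\ref{fact:Hasse}.
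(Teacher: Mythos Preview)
Your argument is correct and follows essentially the same route as the paper: invert the isomorphism of Fact~\ref{fact:eval} to produce the $t_{i,j}$, then note that the resulting identity holds for arbitrary $f\in\F[y]$ because $f$ and its reduction $\bar f$ modulo $h$ have the same Hasse derivatives $f^{(j)}(a_i)$ for $j<m_i$. The paper's proof is terser and leaves this last point implicit; your explicit use of Fact~\ref{fact:Hasse} to justify it is a welcome addition but not a different approach.
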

\begin{proof}
Lemma~\ref{fact:eval} implies that there exists a $\F$-linear map which can compute the coefficients of $1,z,z^2,\hdots, z^{\sum_{i=1}^n m_i-1}$ of $f(z) \in \F[z]/\langle h(z)\rangle$ from the evaluations $f^{(j_i)}(a_i)$ for $i\in \{1,\hdots,n\}$ and $j_i\in~\{0,\hdots,m_i-1\}$. Multiplying these coefficients with $1,\hdots, z^{\sum_{i=1}^n m_i-1}$ computes $f(z)$ in $\F[z]/\langle h(z)\rangle$.
\end{proof}

We are now ready to prove Lemma~\ref{lem:deRich}.

\begin{proof}[Proof of Lemma~\ref{lem:deRich}]
As the statement we are trying to prove is linear over $\Z$ we see that it suffices to prove the lemma in the case of when $f$ equals a monomial. Given $v\in \Z_{\ge 0}^n$ we let $m_v(x)=m_v(x_1,\hdots,x_n)=x_1^{v_1}\hdots x_n^{v_n}$ be a monomial in $\F[x_1,\hdots,x_n]$ where $\F$ is an arbitrary field (we will be working with $\F=\Q(\zeta)$ and $\F=\F_p$).
Let $C(y)=y^{u'}=(y^{u'_1},y^{u'_2},\hdots,y^{u'_n})\in (\F[y])^n$ where $u'=(u'_1,\hdots,u'_n)\in (\Z/p^\ell\Z)^n,u=(u_1,\hdots,u_n)\in \P (\Z/p^k\Z)^{n-1}$ and $u' \text{ (mod }p^k\text{)}=u$. For this proof we use the elements in $ \{0,\hdots, p^\ell -1\}$ to represent the set $\Z/p^\ell\Z$ (similarly for $(\Z/p^k\Z)^n$).

We first prove the following claim.
\begin{claim}\label{cl:inter}
Let $w\in \N, \lambda \in \Z/p^k\Z$. There exists coefficients $b'_{w,\balpha}(\lambda)\in \Q(\zeta)$ (depending on $w,\lambda$ and $C$) for $\balpha\in \Z_{\ge 0}^n$ with $\text{wt}(\balpha)\le~w$ such that for all monomials $m_v(x)\in \Z[x_1,\hdots,x_n],v\in\Z_{\ge 0}^n$ we have,
$$\sum\limits_{\text{wt}(\balpha)\le w} b'_{w,\balpha}(\lambda) m_v^{(\balpha)}(\zeta^{a+\lambda u})=\zeta^{\langle a,v\rangle} (m_v\circ C)^{(w)}(\zeta^\lambda).$$
\end{claim}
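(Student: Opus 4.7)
The plan is to reduce the claim to a direct application of Fact~\ref{fact:com} by cleverly choosing the inner tuple of polynomials. Instead of applying Fact~\ref{fact:com} to $C(y) = (y^{u'_1},\ldots,y^{u'_n})$ directly, I would introduce the twisted tuple $\tilde C(y) = (\zeta^{a_1} y^{u'_1}, \ldots, \zeta^{a_n} y^{u'_n}) \in (\Q(\zeta)[y])^n$, which absorbs the shift by $a$ into the composition. The point is that evaluating $\tilde C$ at $y = \zeta^\lambda$ produces exactly the point $(\zeta^{a_i + \lambda u'_i})_{i=1}^n$, and since $u' \equiv u \pmod{p^k}$ and $\zeta^{p^k} = 1$, this agrees with $\zeta^{a+\lambda u}$, which is where the left-hand side of the claim is being evaluated.

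Concretely, I would first compute $h(y) := m_v(\tilde C(y)) = \prod_{i=1}^n (\zeta^{a_i} y^{u'_i})^{v_i} = \zeta^{\langle a,v\rangle}\, y^{\langle u',v\rangle} = \zeta^{\langle a,v\rangle}\,(m_v \circ C)(y)$. Taking the $w$-th Hasse derivative in $y$ and evaluating at $\zeta^\lambda$ then yields
\[
h^{(w)}(\zeta^\lambda) \;=\; \zeta^{\langle a,v\rangle}\,(m_v \circ C)^{(w)}(\zeta^\lambda),
\]
which is exactly the right-hand side of the claim.

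Next, I would apply Fact~\ref{fact:com} with the field $\F = \Q(\zeta)$, the tuple $\tilde C$, the point $\gamma = \zeta^\lambda$, and the weight $w$. This produces coefficients $b_{w,\balpha} \in \Q(\zeta)$, depending only on $w,\lambda,a,u'$ (equivalently, on $w$ and $\lambda$ once the line and $u'$ are fixed), such that for every monomial $m_v$ one has
\[
h^{(w)}(\zeta^\lambda) \;=\; \sum_{\text{wt}(\balpha)\le w} b_{w,\balpha}\, m_v^{(\balpha)}\bigl(\tilde C(\zeta^\lambda)\bigr).
\]
Because $\tilde C(\zeta^\lambda) = \zeta^{a+\lambda u'} = \zeta^{a+\lambda u}$, setting $b'_{w,\balpha}(\lambda) := b_{w,\balpha}$ gives exactly the identity asserted in the claim.

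There is no real obstacle here — the only point that requires attention is the small bookkeeping step that $\zeta^{p^k}=1$ lets us replace $u'$ by $u$ in the exponent of $\zeta$, which is why the linear combination on the left (written in terms of evaluations at $\zeta^{a+\lambda u}$, matching the Vandermonde data we actually have access to) equals the compositional Hasse derivative on the right (naturally written in terms of $u'$, matching the target ring $\overline T_\ell$). Everything else is a formal consequence of Fact~\ref{fact:com}, and the coefficients automatically lie in $\Q(\zeta)$ since $\tilde C$ has coefficients in $\Z(\zeta) \subseteq \Q(\zeta)$.
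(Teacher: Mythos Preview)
Your proof is correct and rests on the same tool as the paper's, namely the Hasse-derivative chain rule (Fact~\ref{fact:com}), but you package the argument slightly more cleanly. The paper applies Fact~\ref{fact:com} to the untwisted tuple $C(y)=y^{u'}$, obtaining coefficients expressing $(m_v\circ C)^{(w)}(\zeta^\lambda)$ in terms of $m_v^{(\balpha)}(\zeta^{\lambda u'})$, and then uses the explicit formula $m_v^{(\balpha)}(x)=\prod_i\binom{v_i}{\alpha_i}x_i^{v_i-\alpha_i}$ to convert each $m_v^{(\balpha)}(\zeta^{\lambda u'})$ into $\zeta^{-\langle a,v\rangle}\zeta^{\langle\balpha,a\rangle}m_v^{(\balpha)}(\zeta^{a+\lambda u})$; the resulting coefficients are $b'_{w,\balpha}(\lambda)=\zeta^{\langle\balpha,a\rangle}b_{w,\balpha}(\lambda)$. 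By building the shift $a$ into the composition via $\tilde C(y)=(\zeta^{a_i}y^{u'_i})_i$, you bypass this explicit monomial computation entirely: Fact~\ref{fact:com} applied to $\tilde C$ already lands at $\tilde C(\zeta^\lambda)=\zeta^{a+\lambda u}$, and the factor $\zeta^{\langle a,v\rangle}$ falls out of $m_v\circ\tilde C$ automatically. The two arguments are equivalent in content, but yours avoids one bookkeeping step.
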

\begin{proof}
For every $\lambda \in \Z/p^k\Z$ and $w\in \N$, using Lemma~\ref{fact:com} we can find coefficients $b_{w,\balpha}(\lambda)\in \Q(\zeta)$ such that,
\begin{align}\label{eq:Der1}
(f\circ C)^{(w)}(\zeta^{\lambda})= \sum\limits_{\text{wt}(\balpha)\le w} b_{w,\balpha}(\lambda) f^{(\balpha)}(C(\zeta^{\lambda}))= \sum\limits_{\text{wt}(\balpha)\le w} b_{w,\balpha}(\lambda) f^{(\balpha)}(\zeta^{\lambda u'}),
\end{align}
for every polynomial $f\in \Q(\zeta)[x_1,\hdots,x_n]$ where $(f\circ C)(y)=f(y^{u'})\in \Q(\zeta)[y]$. As $u' \text{ (mod }p^k\text{)}=u$ and $\zeta$ is a primitive $p^k$'th root of unity in $\C$ we note that 
$$\zeta^{a+\lambda u}=\zeta^{a+\lambda u'}.$$
We now make the simple observation that for any $\balpha \in \Z_{\ge 0}^n$ and $v\in \Z_{\ge 0}^n$ we have 
$$m^{(\balpha)}_v(x)=\prod\limits_{i=1}^n\binom{v_i}{\alpha_i} x_i^{v_i-\alpha_i},$$
which implies
$$m^{(\balpha)}_v(\zeta^{a+\lambda u})=\zeta^{\langle a,v\rangle}\zeta^{-\langle \balpha,a\rangle}m^{(\balpha)}_v(\zeta^{\lambda u})=\zeta^{\langle a,v\rangle}\zeta^{-\langle \balpha,a\rangle}m^{(\balpha)}_v(\zeta^{\lambda u'}).$$
The above equation combined with \eqref{eq:Der1} for $f=m_v$ implies,
\begin{align*}
    \zeta^{\langle a,v\rangle} (m_v\circ C)^{(w)}(\zeta^\lambda)=\sum\limits_{\text{wt}(\balpha)\le w} \zeta^{\langle \balpha,a\rangle}b_{w,\balpha}(\lambda) m_v^{(\balpha)}(\zeta^{a+\lambda u}),
\end{align*}
for all $w\in \N$ and $v\in \Z_{\ge 0}^n$. Setting $b'_{w,\balpha}(\lambda)=\zeta^{\langle \balpha,a\rangle}b_{w,\balpha}(\lambda)$ we are done .
\end{proof}

Without loss of generality let us assume $\sum_{x\in L} \pi(x)=p^\ell$ (if it is greater we can reduce each of the $\pi(x)$ until we reach equality - this would just mean that our computation was done by ignoring some higher order derivatives at some of the points).

Let $h(y)\in \Z(\zeta)[y]\subseteq \Q(\zeta)[y]$ be the polynomial,
$$h(y)= \prod\limits_{\lambda \in \Z/p^k\Z}(y-\zeta^{\lambda})^{\pi(a+\lambda u)}.$$

Using Corollary~\ref{cor:eval} there is a $\Q(\zeta)[z]$-linear combination of the evaluations  $(m_v\circ C)^{(w)}(\zeta^\lambda)$ for $\lambda \in \Z/p^k\Z$ and $w<\pi(a+\lambda u)$ which can compute the element 
$$(m_v\circ C)(z)=z^{\langle v,u'\rangle} \in \Q(\zeta)[z]/\langle h(z)\rangle.$$ 
This statement along with Claim~\ref{cl:inter} leads to the following: there exist elements $c_{\lambda,\balpha} \in \Q(\zeta)[z]$ (depending on $\pi,L$ and $u'$) for $\lambda \in \Z/p^k\Z$ and $\balpha\in  \Z_{\ge 0}^n$ with $\text{wt}(\balpha)< \pi(a+\lambda u)$ such that the following holds for all monomials $m_v\in \Z[x_1,\hdots,x_n],v\in \Z_{\ge 0}$ we have,

$$\sum\limits_{\lambda =0}^{p^k-1}\sum\limits_{\text{wt}(\balpha) < \pi(a+\lambda u) }c_{\lambda,\balpha} m_v^{(\balpha)}(\zeta^{a+\lambda u})=\zeta^{\langle a,v\rangle} (m_v\circ C)(z)=\zeta^{\langle a,v\rangle}z^{\langle v,u'\rangle} \in \Q(\zeta)/\langle h(z)\rangle.$$

We claim that these are coefficients we wanted to construct in the statement of this lemma. 

All we need to show now is that $\psi_{p^k}$ is a ring homomorphism from the ring $\Z(\zeta)/\langle h(z)\rangle$ to the ring $\overline{T}_\ell=\F_p(\zeta)/\langle z^\ell -1\rangle$ and maps $\zeta^{\langle a,v\rangle}z^{\langle v,u'\rangle}$ to $z^{\langle v,u'\rangle}$. This follows from Corollary~\ref{cor:psiE} and noting 
$$\psi_{p^k}(h(z))=(z-1)^{\sum\limits_{\lambda \in\Z/p^k\Z} \pi(a+\lambda u)}=(z-1)^{p^\ell}=z^{p^\ell}-1\in \F_p[z].$$

\end{proof}

We will use the lemma above in the form of the following corollary.

\begin{cor}[Decoding $M_{p^\ell,n}$ from $U^{(\balpha)}_{p^\ell}$]\label{lem:decoding}
Let $L=\{a+\lambda u| \lambda \in \Z/p^k\Z\}\subset (\Z/p^k\Z)^n$ with $a\in (\Z/p^k\Z)^n,u\in \P (\Z/p^k\Z)^{n-1}$, $u'\in (\Z/p^\ell\Z)^n$ be such that {\em $u'\text{ (mod }p^k\text{)}=u$} and $\pi:L\rightarrow \Z_{\ge 0}$ be a function which satisfies $\sum_{x\in L} \pi(x) \ge p^\ell$ then there exists a $\Q(\zeta)[z]$-linear combination (with coefficients depending on $\pi,L$ and $u'$) of the vectors $U_{p^\ell}^{(\balpha)}(\zeta^x)$ for $x\in L$ and $\balpha$ of weight strictly less than $\pi(x)$ which under the map $\psi_{p^k}$ gives us the vector $M_{p^\ell,n}(u')$.
\end{cor}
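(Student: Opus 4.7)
The plan is to apply Lemma~\ref{lem:deRich} entrywise, using the fact that the entries of $U_{p^\ell}^{(\balpha)}(\zeta^x)$ are precisely Hasse derivatives of monomials evaluated at $\zeta^x$, while the entries of $M_{p^\ell,n}(u')$ are monomial evaluations of the form $z^{\langle u',v\rangle} = m_v(z^{u'})$.

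First, I would fix the indexing. The columns of $U_{p^\ell}^{(\balpha)}(y)$ and of $M_{p^\ell,n}(u')$ are both naturally indexed by the set of $n$-tuples $v \in \{0,\ldots,p^\ell-1\}^n$, which I identify with $(\Z/p^\ell\Z)^n$. For such a $v$, let $m_v(x) = x_1^{v_1}\cdots x_n^{v_n} \in \Z[x_1,\ldots,x_n]$. Then the $v$-th entry of $U_{p^\ell}^{(\balpha)}(\zeta^x)$ is $m_v^{(\balpha)}(\zeta^x)$ by the definition of $U_d^{(\balpha)}$, and the $v$-th entry of $M_{p^\ell,n}(u')$ is $z^{\langle u',v\rangle} = m_v(z^{u'}) \in \overline{T}_\ell$.

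Next, I would invoke Lemma~\ref{lem:deRich} to obtain the coefficients $c_{\lambda,\balpha} \in \Q(\zeta)[z]$ (depending only on $\pi,L,u'$, not on $v$) such that for every polynomial $f \in \Z[x_1,\ldots,x_n]$,
$$\psi_{p^k}\!\left(\sum_{\lambda=0}^{p^k-1} \sum_{\text{wt}(\balpha)<\pi(a+\lambda u)} c_{\lambda,\balpha}\, f^{(\balpha)}(\zeta^{a+\lambda u})\right) = f(z^{u'}) \in \overline{T}_\ell.$$
Specializing to $f = m_v$ for each $v \in \{0,\ldots,p^\ell-1\}^n$, the right-hand side becomes the $v$-th entry of $M_{p^\ell,n}(u')$, and the left-hand side is the $v$-th entry of the vector $\sum_{\lambda,\balpha} c_{\lambda,\balpha}\, U_{p^\ell}^{(\balpha)}(\zeta^{a+\lambda u})$, after applying $\psi_{p^k}$ entrywise.

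Finally, I would reparametrize the sum over $\lambda \in \Z/p^k\Z$ as a sum over $x = a + \lambda u \in L$, noting that $\pi(a+\lambda u) = \pi(x)$, so the restriction $\text{wt}(\balpha) < \pi(x)$ matches the statement. The conclusion is that the same $\Q(\zeta)[z]$-linear combination of the vectors $U_{p^\ell}^{(\balpha)}(\zeta^x)$ for $x \in L$ and $\text{wt}(\balpha) < \pi(x)$ yields, after componentwise application of $\psi_{p^k}$, the row vector $M_{p^\ell,n}(u')$. There is no real obstacle here since all the substantive work (the existence of the coefficients and the identity involving $\psi_{p^k}$) has already been done in Lemma~\ref{lem:deRich}; one only needs to verify that entries match up under the indexing by $v$ and reinterpret the $c_{\lambda,\balpha}$ as coefficients of vector-valued combinations rather than scalar combinations.
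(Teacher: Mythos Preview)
Your proposal is correct and follows exactly the same approach as the paper, which simply states that Lemma~\ref{lem:deRich} gives the required linear combination. You have merely spelled out the entrywise matching in more detail than the paper does.
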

\begin{proof}
Lemma~\ref{lem:deRich} gives us the required linear combination.
\end{proof}

The coefficients defined by the above corollary correspond to the rows of the matrix $K_S$ in the proof overview (Section~\ref{sec:proofOver}).

\section{Kakeya Set bounds over $\Z/p^k\Z$}\label{sec:pkBd}

We first prove two helper lemmas. We recall that $\GL_n(\Z/p^k\Z)$ is the set of linear isomorphisms over $(\Z/p^k\Z)^n$. They are represented by matrices whose determinants are units in $\Z/p^k\Z$.

The first lemma implies that given a large subset $D\subseteq \P (\Z/p^k\Z)^{n-1}$ there exists a $W$ in $\GL_n(\Z/p^k\Z)$ such that $M_{p^\ell,n}(W\cdot D)$ has high rank where $$W \cdot D=\{W\cdot u|u\in D\}$$ 
that is the set of directions obtained by rotating the elements in $D$ by $W$. In general, we also prove that there exists a $W$ such that $M_{p^\ell,n}(D'_W)$ has high rank where 
$$D'_W=\{u\in (\Z/p^\ell\Z)^{n}| u\text{ (mod }p^k\text{)}\in W \cdot D\}$$ 
that is the set of directions in  $(\Z/p^\ell\Z)^{n}$ which under the mod $p^k$ map give an element in $W\cdot D$. We prove the statement using a random rotation argument.

\begin{lem}\label{fm:rdRo}
Let $k,n\in \N,\epsilon \ge 0$ and $p$ be a prime. Given a set $D\subseteq \P (\Z/p^k\Z)^{n-1}$  containing at least an $\epsilon$ fraction of elements in $\P (\Z/p^k\Z)^{n-1}$ then there exists a matrix {\em $W\in \GL_n(\Z/p^k\Z)$} such that,
{\em $$\text{rank}_{\F_p} M_{p^\ell,n}(D'_W)\ge \epsilon \cdot \binom{p^\ell \ell^{-1}+n}{n}.$$}
\end{lem}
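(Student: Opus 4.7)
The plan is to use an averaging argument over a uniformly random rotation $W \in \GL_n(\Z/p^k\Z)$. First I would combine Lemmas~\ref{lem:1rank} and~\ref{lem:resRk} to fix a set $I \subseteq \P (\Z/p^\ell\Z)^{n-1}$ of size at least $\binom{p^\ell \ell^{-1}+n}{n}$ whose corresponding rows of $M_{p^\ell,n}$ are $\F_p$-linearly independent. The goal then reduces to finding a $W$ for which many elements of $I$ also lie in $D'_W$, since any subset of a linearly independent set of rows is itself independent, so such a $W$ will automatically yield the desired rank bound.

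The key ingredient is transitivity of the $\GL_n(\Z/p^k\Z)$-action on $\P (\Z/p^k\Z)^{n-1}$: any vector in $(\Z/p^k\Z)^n$ with at least one unit coordinate can be completed to a basis by replacing the corresponding standard basis vector, so for any two representatives $u,v$ of classes in $\P (\Z/p^k\Z)^{n-1}$ there exists some $W \in \GL_n(\Z/p^k\Z)$ taking $u$ to $v$. Consequently, for any fixed $\bar v \in \P (\Z/p^k\Z)^{n-1}$, the distribution of $W^{-1}\bar v$ under uniform $W$ is uniform on $\P (\Z/p^k\Z)^{n-1}$. Now for each $v \in I$, its reduction $\bar v = v \bmod p^k$ still has a unit coordinate (being a unit in $\Z/p^\ell\Z$ is equivalent to being non-zero modulo $p$, a property preserved under reduction), so $\bar v$ represents an element of $\P (\Z/p^k\Z)^{n-1}$, and therefore
$$\Pr_W[\bar v \in W \cdot D] = \Pr_W[W^{-1}\bar v \in D] = \frac{|D|}{|\P (\Z/p^k\Z)^{n-1}|} \ge \epsilon.$$

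Summing over $v \in I$, linearity of expectation gives $\E_W[|I \cap D'_W|] \ge \epsilon |I| \ge \epsilon \binom{p^\ell \ell^{-1}+n}{n}$, so some specific $W$ achieves $|I \cap D'_W| \ge \epsilon \binom{p^\ell \ell^{-1}+n}{n}$. For that $W$ the rows of $M_{p^\ell,n}$ indexed by $I \cap D'_W$ form a subset of an independent set and are hence themselves independent, yielding $\rank_{\F_p} M_{p^\ell,n}(D'_W) \ge \epsilon \binom{p^\ell \ell^{-1}+n}{n}$ as desired. I do not anticipate a serious obstacle: the two points that warrant a sentence of justification are the transitivity of the action (a standard fact about unimodular vectors over the local ring $\Z/p^k\Z$) and the observation that the independent rows supplied by Lemma~\ref{lem:1rank} can be taken inside $\P (\Z/p^\ell\Z)^{n-1}$, which is precisely the content of Lemma~\ref{lem:resRk}.
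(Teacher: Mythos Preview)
Your overall strategy—choose $W \in \GL_n(\Z/p^k\Z)$ uniformly at random, invoke transitivity of the action on $\P(\Z/p^k\Z)^{n-1}$, and average—is exactly the paper's. The gap is in the first step. You want to fix a set $I \subseteq \P(\Z/p^\ell\Z)^{n-1}$ of size at least $r := \binom{p^\ell \ell^{-1}+n}{n}$ whose rows in $M_{p^\ell,n}$ are $\F_p$-linearly independent, but in general no such $I$ can exist simply because $|\P(\Z/p^\ell\Z)^{n-1}|$ is too small: for $p=2$, $n=2$, $\ell=10$ one has $|\P(\Z/p^\ell\Z)^{n-1}| = 1536$ while $r > 5000$. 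The point is that $M_{p^\ell,n}$ has entries in the ring $\overline{T}_\ell$, which has $\F_p$-dimension $p^\ell$, so its $\F_p$-rank (a \emph{column} notion, computed via $\Coeff$) can and does exceed the number of its rows. Relatedly, even when some rows of such a matrix happen to be $\F_p$-independent as $\overline{T}_\ell$-vectors, this does not force the column $\F_p$-rank of the corresponding submatrix to be at least the number of rows (take the $2\times 1$ matrix with entries $1$ and $z$), so your final implication would also fail.

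The fix is exactly what the paper does: run the same averaging argument on rows of the genuine $\F_p$-matrix $\Coeff\bigl(M_{p^\ell,n}(\P(\Z/p^\ell\Z)^{n-1})\bigr)$ rather than on row-indices of $M_{p^\ell,n}$ itself. There one may pick an honestly $\F_p$-linearly independent set $V$ of at least $r$ rows, each labelled by a pair $(i,u)$ with $u \in \P(\Z/p^\ell\Z)^{n-1}$; the row $(i,u)$ appears in $\Coeff(M_{p^\ell,n}(D'_W))$ precisely when $u \bmod p^k \in W\cdot D$, and your transitivity and expectation argument then goes through verbatim, with Fact~\ref{lem:coeffR} converting the conclusion back into a bound on $\rank_{\F_p} M_{p^\ell,n}(D'_W)$.
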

\begin{proof}
From Lemma~\ref{lem:resRk} we know that the $\F_p$ rank of $M_{p^\ell,n}((\P \Z/p^\ell\Z)^{n-1})$ is at least $\binom{p^\ell l^{-1}+n}{n}$. This means there exists a set of rows $V$ of size at least $\binom{p^\ell l^{-1}+n}{n}$ in $\Coeff(M_{p^\ell,n}((\P \Z/p^\ell\Z)^{n-1}))$ which are $\F_p$-linearly independent.

We pick $W\in \GL_n(\Z/p^k\Z)$ uniformly at random and as in the statement of the lemma consider the set $D'_W=\{u\in (\Z/p^\ell\Z)^{n}| u\text{ (mod }p^k\text{)}\in W\cdot D\}$. The key claim about $D'_W$ is the following.

\begin{claim}
Any given row $v\in V$ will appear in {\em$\Coeff(M_{p^\ell,n}(D'_W))$} with probability at least $\epsilon$.  
\end{claim}
\begin{proof}
As $\GL_n(\Z/p^k\Z)$ acts transitively on the set $\P (\Z/p^k\Z)^{n-1}$ and $|D|\ge \epsilon |\P (\Z/p^k\Z)^{n-1}|$ we see that $D'_W$ will contain a particular $u\in \P (\Z/p^\ell\Z)^{n-1}$ with probability at least $\epsilon$. The row $v\in V$ will be within $\Coeff(M_{p^\ell,n}(u))$ for some $u\in \P (\Z/p^\ell\Z)^{n-1}$. This means that $v\in V$ will appear in $\Coeff(M_{p^\ell,n}(D'_W))$ with probability at least $\epsilon$. 
\end{proof}

Now by linearity of expectation we see that the expected number of rows $V$ appearing in $\Coeff(M_{p^\ell,n}(D'_W))$ is at least $\epsilon|V|$. This means there exists a choice of $W$ such that an $\epsilon$ fraction of elements in $V$ do appear in $\Coeff(M_{p^\ell,n}(D'_W))$. As all these rows are linearly independent then by Lemma~\ref{lem:coeffR} we see that the $\F_p$-rank of $M_{p^\ell,n}(D'_W)$ must be at least $\epsilon|V|\ge \epsilon \binom{p^\ell l^{-1}+n}{n}$.
\end{proof}

The second lemma lower bounds the size of $(m,\epsilon)$-Kakeya sets by the rank of a sub-matrix of $M_{p^\ell, n}$ with rows corresponding to directions with rich lines.

\begin{lem}\label{fm:rkBd}
Let $k,n\in \N$ and $p$ be a prime. Let $S\subseteq (\Z/p^k\Z)^n$ be a $(m,\epsilon)$-Kakeya set and $D\subseteq \P (\Z/p^k\Z)^{n-1},|D|\ge \epsilon |\P (\Z/p^k\Z)^{n-1}|$ such that for every $u\in D$ we have a $m$-rich line $L_u$ with respect to $S$ in direction $u$. Then we have the following bound,
{\em $$ |S|\binom{\lceil p^\ell m^{-1} \rceil+n-1}{n} \ge \text{rank}_{\F_p} M_{p^\ell,n}(D'),$$}
for all $\ell \ge \log_p(m)$ where {\em$D'=D'_I=\{u\in (\Z/p^\ell\Z)^{n}| u\text{ (mod }p^k\text{)}\in D\}.$}
\end{lem}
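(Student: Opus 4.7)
The plan is to use Corollary~\ref{lem:decoding} to express each row $M_{p^\ell,n}(u')$ as the image under $\psi_{p^k}$ of a linear combination of evaluation vectors $U_{p^\ell}^{(\balpha)}(\zeta^x)$, with $x$ ranging over the intersection of $S$ with an $m$-rich line and $\balpha$ over multi-indices of weight less than $W := \lceil p^\ell/m \rceil$. Aggregating across $u' \in D'$, the rows of a suitable lift $\hat M$ of $M_{p^\ell,n}(D')$ all lie in the row span of a single universal matrix $U^{**}$ whose rows are indexed by $(x,\balpha)$ with $x \in S$ and $\text{wt}(\balpha) < W$. The desired rank bound then reduces to counting the rows of $U^{**}$, namely $|S|\binom{W+n-1}{n}$.

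Concretely, for each direction $d \in D$ fix $m$ points of $L_d \cap S$ (possible since $L_d$ is $m$-rich) and define $\pi_d$ to be $W$ on these points and $0$ elsewhere, so that $\sum_x \pi_d(x) \ge mW \ge p^\ell$. For each $u' \in D'$ with $u' \bmod p^k = d$, Corollary~\ref{lem:decoding} provides coefficients $c_{u',x,\balpha} \in \Q(\zeta)[z]$ (for the chosen $x$ and $\text{wt}(\balpha)<W$) such that $\psi_{p^k}\bigl(\sum_{x,\balpha} c_{u',x,\balpha}\,U_{p^\ell}^{(\balpha)}(\zeta^{x})\bigr) = M_{p^\ell,n}(u')$. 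Assembling these into a matrix equation produces $\hat M = C \cdot U^{**}$, where $C$ holds the coefficients (padded with zeros for $x \notin L_d \cap S$) and $\hat M$ is lifted to have entries in $T_\ell^k = \Z(\zeta)[z]/\langle z^{p^\ell}-1\rangle$ with $\psi_{p^k}(\hat M) = M_{p^\ell,n}(D')$.

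At this point three lemmas combine to finish the proof. Lemma~\ref{lem:quoRank} gives $\rank_{\F_p} M_{p^\ell,n}(D') = \rank_{\F_p} \psi_{p^k}(\hat M) \le \rank_{\Q(\zeta)} \hat M$. Lemma~\ref{lem:crankMatMult}, applied with a single left factor $C$, yields $\rank_{\Q(\zeta)} \hat M = \rank_{\Q(\zeta)}(C \cdot U^{**}) \le \rank_{\Q(\zeta)} U^{**}$. Finally, since $U^{**}$ has entries in $\Z(\zeta) \subseteq T_\ell^k$ with no $z$ dependence, its coefficient matrix over $\Q(\zeta)$ is nonzero only in the constant-term block, so its $\Q(\zeta)$-rank equals the ordinary $\Q(\zeta)$-rank of the evaluation matrix, which is at most its number of rows: $|S|\binom{W+n-1}{n} = |S|\binom{\lceil p^\ell/m\rceil + n - 1}{n}$.

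The main obstacle is the ring-theoretic bookkeeping required to construct the lift $\hat M$ inside the uniform ring $T_\ell^k$. The decoding identity from Lemma~\ref{lem:deRich} is most naturally stated in the quotient $\Z(\zeta)[z]/\langle h_{u'}(z)\rangle$, where $h_{u'}$ varies with $u'$ (and does not in general divide $z^{p^\ell}-1$), rather than in $T_\ell^k$. The bridge is the identity $\psi_{p^k}(h_{u'}(z)) = (z-1)^{p^\ell} = z^{p^\ell}-1$ in $\F_p[z]$: this shows that if two elements of $\Q(\zeta)[z]$ differ by a multiple of $h_{u'}$ or of $z^{p^\ell}-1$, their images under $\psi_{p^k}$ agree in $\overline{T}_\ell$. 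Hence one may reduce the polynomial combination modulo $z^{p^\ell}-1$ instead, clearing any denominators using the fact that the combination modulo $h_{u'}$ already lies in $\Z(\zeta)[z]/\langle h_{u'}\rangle$; this produces the required lift $\hat M$ in $T_\ell^k$ whose image under $\psi_{p^k}$ equals $M_{p^\ell,n}(D')$, legitimizing the application of Lemma~\ref{lem:quoRank}.
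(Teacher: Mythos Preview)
Your proposal is correct and follows essentially the same route as the paper: bound the $\Q(\zeta)$-rank of the evaluation vectors $U_{p^\ell}^{(\balpha)}(\zeta^x)$ for $x\in S$, $\text{wt}(\balpha)<\lceil p^\ell/m\rceil$ by the number of such pairs, then use Corollary~\ref{lem:decoding} together with Lemmas~\ref{lem:crankMatMult} and~\ref{lem:quoRank} to pass to $\rank_{\F_p} M_{p^\ell,n}(D')$. The only cosmetic difference is that you collapse the paper's family $\{\cU_{L_u,p^\ell}\}_{u\in D}$ and its $\crank$ formalism into a single matrix equation $\hat M = C\cdot U^{**}$; your final paragraph on lifting into $T_\ell^k$ via $\psi_{p^k}(h)=z^{p^\ell}-1$ makes explicit a ring-theoretic point the paper leaves inside the statement and proof of Lemma~\ref{lem:deRich}.
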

\begin{proof}
We set
$$b=\lceil p^\ell m^{-1} \rceil.$$
Consider the family of row vectors, $U_{p^\ell}^{(\balpha)}(\zeta^x)$ where $x\in S$ and $\text{wt}(\balpha)<b$. For a $u\in D$, let $L_u$ be a $m$-rich line with respect to $S$ in direction $u$. Let $\cU_{L_u,p^\ell}$ be the matrix constructed by concatenating the row vectors $U_{p^\ell}^{(\balpha)}(\zeta^x)$ where $x\in L_u$ and $\text{wt}(\balpha)<b$. By construction as the rows in $\{\cU_{L_d,p^\ell}\}_{u\in D}$ correspond to tuples $x\in S$ and $\balpha\in \Z_{\ge 0}^n, \text{wt}(\balpha)<b$ we have the following bound,

\begin{align}\label{eq:NBd}
    |S|\binom{b+n-1}{n} \ge \text{crank}_{\Q(\zeta)} \{\cU_{L_u,p^\ell}\}_{u\in D}.
\end{align}

Let $\pi_u:L_u\rightarrow \Z_{\ge 0}$ be a weight function which gives weight $b$ to points in $S\cap L_u$ and $0$ elsewhere. Using Corollary~\ref{lem:decoding} for the line $L_u$ and function $\pi_u$, for all $u' \in (\Z/p^\ell\Z)^{n}$ such that $u' \text{ (mod }p^k\text{)}=u$, we can now construct row vectors $C_{u'}$ such that $\psi_{p^k}(C_{u'}\cU_{L_u,p^\ell})=M_{p^\ell,n}(u')$.  For convenience, for every $u\in \P (\Z/p^k\Z)^{n-1}$ we define $\cC_u$ and $\cM_u$ as the matrices whose row vectors are $C_{u'}$ and $M_{p^\ell,n}(u')$ respectively for all $u'\in (\Z/p^\ell\Z)^{n}$ such that $u' \text{ (mod }p^k\text{)}=u$. We note, $\psi_{p^k}(\cC_{u}\cU_{L_u,p^\ell})=\cM_{p^\ell,n}(u)$.

We use Lemma~\ref{lem:crankMatMult} on $\{\cU_{L_u,p^\ell}\}_{u\in D}$ being multiplied by $\cC_u,u\in D$ and applying Lemma~\ref{lem:quoRank} next to get,
\begin{align*}
    \text{crank}_{\Q(\zeta)} \{\cU_{L_u,p^\ell}\}_{u\in D}&\ge \text{crank}_{\Q(\zeta)} \{\cC_{u}\cU_{L_u,p^\ell}\}_{u\in D}\\
    &\ge  \text{crank}_{\F_p} \{\cM_{p^\ell,n}(u)\}_{u\in D}=\text{rank}_{\F_p} M_{p^\ell,n}(D'),
\end{align*}
where we recall $D'=D'_I=\{u\in (\Z/p^\ell\Z)^{n}| u\text{ (mod }p^k\text{)}\in D\}.$
Using the above inequality with \eqref{eq:NBd} we have,
\begin{align*}
|S|\binom{b+n-1}{n}=    |S|\binom{\lceil p^\ell m^{-1} \rceil+n-1}{n}\ge  \text{rank}_{\F_p} M_{p^\ell,n}(D').
\end{align*}
\end{proof}

We are now ready to prove Theorem~\ref{thm:Smain}.

\begin{repthm}{thm:Smain}
Let $k,n,p\in \N$ with $p$ prime. Any $(m,\epsilon)$-Kakeya set $S\subseteq (\Z/p^k\Z)^n$ satisfies the following bound,
$$|S| \ge \epsilon\cdot\left(\frac{m^n}{(2(k+\lceil\log_p(n)\rceil))^n}\right).$$
When $p>n$, we also get the following stronger bound for $(m,\epsilon)$-Kakeya set $S$ in $(\Z/p^k\Z)^n$,
$$|S|\ge \epsilon\cdot\left(\frac{m^n}{(k+1)^n}\right) (1+n/p)^{-n}.$$
\end{repthm}
\begin{proof}
Set $\ell = k+\lceil \log_p(n)\rceil$. Let $D$ be the set of directions in $\P (\Z/p^k\Z)^{n-1}$ which have $m$-rich lines. We know $D$ contains at least an $\epsilon$ fraction of directions. By Lemma~\ref{fm:rdRo} there exists a matrix $W\in \GL_n(\Z/p^k\Z)$ such that,

$$\text{rank}_{\F_p} M_{p^\ell,n}(D'_W)\ge \epsilon \cdot \binom{p^\ell \ell^{-1}+n}{n},$$
where $D'_W=\{u\in (\Z/p^\ell\Z)^{n}| u\text{ (mod }p^k\text{)}\in W \cdot D\}.$

We now note $W\cdot S$ is also an $(m,\epsilon)$-Kakeya set with $W\cdot D$ as the set of directions with $m$-rich lines. We now apply Lemma~\ref{fm:rkBd} with the inequality above to get,

$$|S|\binom{p^\ell m^{-1}+n}{n}\ge |S|\binom{\lceil p^\ell m^{-1}\rceil+n-1}{n}\ge \text{rank}_{\F_p} M_{p^\ell,n}(D'_W) \ge \epsilon \cdot \binom{p^\ell \ell^{-1} +n}{n}.$$

For convenience we set $a=\lceil \log_p(n)\rceil$ which implies $p^{a}\ge n\ge p^{a-1}$. The above inequality implies,
\begin{align*}
    |S| &\ge \epsilon \cdot \prod\limits_{i=1}^n \frac{p^k p^{a} (k+a)^{-1} +i}{p^k p^{a} m^{-1}+i}\\
    &\ge \epsilon \cdot \prod\limits_{i=1}^n \frac{ m(k+a)^{-1} +i m p^{-k} p^{-a}}{1+i m p^{-k} p^{-a}}\\
    &\ge \epsilon \cdot \frac{m^n}{(k+a)^{n}} \prod\limits_{i=1}^n\left(1+i p^{-a} \right)^{-1}.
\end{align*}
For $i\le n$ we have $\left(1+i p^{-a} \right)^{-1}\le 2$ which completes the proof. Note, the second half of the theorem follows from observing that for $p>n$ we have $\left(1+i p^{-a} \right)^{-1}\le 1+n/p$ and $a=\lceil\log_p(n)\rceil=1$.
\end{proof}

\section{Kakeya Set bounds over $\Z/N\Z$ for general $N$}\label{sec:Nbd}

We need some simple facts about $\Z/p^kN\Z$ for $p$ prime and $p$ and $N$ co-prime which follow from the Chinese remainder Theorem.

\begin{lem}[Geometry of $\Z/p^kN\Z$]\label{fact:remainder}
Let $p,N,n,k\in \N,R=\Z/p^kN\Z,R_1=\Z/N\Z,R_0=\Z/p^k\Z$ with $p$ prime and co-prime to $N$.  Using the Chinese remainder theorem we know that $R^n$ is isomorphic to $R_0^n\times R_1^n$ and $\P R^{n-1}$ is isomorphic to $\P R_0^{n-1}\times \P R_1^{n-1}$.  Finally, any line $L=\{a+\lambda u| \lambda \in R\}$ with direction $u=(u_0,u_1)\in R_0^{n} \times R_1^{n}$ in $R^n$ is equivalent to the product of a line $L_0\subset R_0^n$ in direction $u_0$ and a line $L_1\subset R_1^n$ in direction $u_1$. 
\end{lem}

We briefly discuss the proof strategy first. We use ideas from \cite{dhar2021proof} to prove Theorem~\ref{main}. We give the idea for $N=p^{k_1}q^{k_2}$ where $p$ and $q$ are distinct primes. Let $S$ be Kakeya set which contains lines $L_u$ in the direction $u\in \P(\Z/N\Z)^{n-1}$. Using Lemma~\ref{fact:remainder} we know that the line $L_u$, $u=(u_p,u_q)\in \P (\Z/p^{k_1}\Z)^{n-1}\times \P (\Z/q^{k_2}\Z)^{n-1}$ can be decomposed as a product of lines $L_p(u_p,u_q)$ over $\Z/p^{k_1}\Z$ and $L_q(u_p,u_q)$ over $\Z/q^{k_2}\Z$  with directions $u_p\in \P (\Z/p^{k_1}\Z)^{n-1}$ and $u_q\in\P (\Z/q^{k_2}\Z)^{n-1}$ respectively. Note that $L_p(u_p,u_q)$ and $L_q(u_p,u_q)$ depend on $u=(u_p,u_q)$, and not just on $u_p$ or $u_q$ respectively. 

Let $\zeta$ be a primitive $p^{k_1}$'th root of unity in $\C$ and $\Ind_y$ be the indicator vector of the point $y\in (\Z/q^{k_2}\Z)^n$. We then examine the span of vectors 
$$U_{p^{k_1}}(\zeta^x)\otimes \Ind_y=U^{((0,\hdots,0))}_{p^{k_1}}(x)\otimes \Ind_y$$
for $x\in L_p(u_p,u_q),y\in L_q(u_p,u_q),(u_p,u_q)\in \P (\Z/p^{k_1}\Z)^{n-1}\times \P (\Z/q^{k_2}\Z)^{n-1}$. As there is one vector for each point in $S$, the dimension of the space spanned by these vectors is at most $|S|$. We then use the decoding procedure for $U_{p^{k_1}}$ from Corollary~\ref{lem:decoding} to linearly generate vectors 
$$M_{p^{k_1},n}(u_p)\otimes \Ind_y$$
for $y\in L_q(u_p,u_q),(u_p,u_q)\in \P (\Z/p^{k_1}\Z)^{n-1}\times \P (\Z/q^{k_2}\Z)^{n-1}$ from the vectors $U_{p^{k_1}}(\zeta^x)\otimes \Ind_y$ for $x\in L_p(u_p,u_q),y\in L_q(u_p,u_q)$. This means the dimension of the span of $M_{p^{k_1},n}(u_p)\otimes y$ for $y\in L_q(u_p,u_q),(u_p,u_q)\in \P (\Z/p^{k_1}\Z)^{n-1}\times \P (\Z/q^{k_2}\Z)^{n-1}$ lower bounds the size of $S$. Note that $M_{p^{k_1},n}(u_p)$ only depends on $u_p$ and not on $u_q$.

We pick the largest subset of rows $V$ in $\Coeff(M_{p^{k_1},n}(\P (\Z/p^{k_1}\Z)^{n-1})$ which are linearly independent. By Lemma~\ref{lem:1rank}, $V$ has large size. Any vector $v\in V$ will correspond to a row in $\Coeff(M_{p^{k_1},n}(u_p))$ for some $u_p\in \P (\Z/p^k\Z)^{n-1}$. For that $u_p$, the span of $\Ind_y$ for $y\in L_q(u_p,u_q)$ as $u_q$ varies in $\P (\Z/q^{k_2}\Z)^{n-1} $ will have dimension exactly equal to the size of the set 
$$S_v=\bigcup\limits_{u_q\in \P (\Z/q^{k_2}\Z)^{n-1} } L_q(u_p,u_q)$$ 
which is a Kakeya set in $(\Z/q^{k_2}\Z)^n$ and has large size by Theorem~\ref{thm:Smain}. Simple properties of the tensor product (Lemma~\ref{lem:crankTensorProduct}) now imply that $v\otimes \Ind_y$ for $v\in V$ and $y\in S_v$ are linearly independent which gives us a rank bound and hence a lower bound on the size of $S$

Using $U^{(\balpha)}_{p^\ell}$ for $\balpha\in \Z_{\ge 0}^n,\text{wt}(\balpha)<p^{\ell-k}$ as in the proof of Theorem~\ref{thm:Smain} gives us better constants. The proof for an arbitrary number of distinct prime factors applies the above argument inductively.

\begin{repthm}{main}
Let $n\in \N$ and $R=\Z/N\Z$  where $N=p_1^{k_1}\hdots p_r^{k_r}$ with distinct primes $p_1,\hdots,p_r$ and $k_1,\hdots,k_r\in \N$. Any Kakeya set $S$ in $R^n$ satisfies,
$$|S|\ge N^n\prod\limits_{i=1}^r (2(k_i+\lceil \log_{p_i}(n)\rceil))^{-n}.$$

When $p_1,\hdots,p_r\ge n$, we also get the following stronger lower bound for the size of a Kakeya set $S$ in $(\Z/N\Z)^n,N=p_1^{k_1}\hdots p_r^{k_r}$,
$$|S|\ge N^n\prod\limits_{i=1}^r(k_i+1)^{-n}(1+n/p_i)^{-n}.$$
\end{repthm}
\begin{proof}
Our proof will apply induction on $r$. The case of $r=1$ is Theorem~\ref{thm:Smain}. Let the theorem be known for $r$ distinct prime factors.
Let $N=p_0^{k_0}p_1^{k_1}\hdots p_r^{k_r}$ with $N_1=p_1^{k_1}\hdots p_r^{k_r}$. Let $R=\Z/N\Z$, $R_0=\Z/p_0^{k_0}\Z$ and $R_1=\Z/N_1\Z$. 

Consider $S$ a Kakeya set in $R^n$ and a set of lines indexed by directions $u\in \P R^{n-1}$ such that $L(u)=\{a(u)+\lambda u|\lambda \in R\}\subset R^n$ is a line in the direction $u$ and is contained in $S$. By Lemma~\ref{fact:remainder} we know $u$ can be written as a tuple $(u_0,u_1) \in \P R_0^{n} \times \P R_1^{n}$ and $L(u)$ is a product of a line $L_{0}(u_0,u_1)\subset R_0^n$ in direction $u_0$ and line $L_{1}(u_0,u_1)\subset R_1^n$ in direction $u_1$. We note $L_1(u_0,u_1)$ can actually depend on $u_0$ (similarly for $L_0(u_0,u_1)$ and $u_1$). This will not be the case only when $S$ itself is a product of sets from $R_0^n$ and $R_1^n$.

Let
\begin{align}\label{eq:b2}
\ell = k_0+\lceil \log_{p_0}(n)\rceil \text{ and } b =p^{\lceil \log_{p_0}(n)\rceil}.
\end{align} 

Let $\zeta_0$ be a complex primitive $p_0^{k_0}$'th root of unity. We define $\cU_{L_{0}(u_0,u_1)}$ as the matrix constructed by concatenating the row vectors $U_{p_0^\ell}^{(\balpha)}(\zeta_0^x)$ where $x\in L_{0}(u_0,u_1)$ and $\text{wt}(\balpha)<b$.

Let $Y_{u_0,u_1}$ be a matrix whose columns are labeled by points in $R_1^n$ and rows labeled by points in $L_1(u_0,u_1)$ such that its $y$'th row is $\Ind_y$ where $\Ind_y$ is the indicator vector of $y\in R_1^n$. We will work with the family of matrices $\cU_{L_{0}(u_0,u_1)}\otimes Y_{u_0,u_1}$ for $(u_0,u_1)\in \P R_0^{n-1}\times \P R_1^{n-1}$.

The following claim connects this family to the size of $S$.

\begin{claim}\label{claim:set1}
{\em $$\crank_{\Q(\zeta_0)} \{\cU_{L_{0}(u_0,u_1)}\otimes Y_{u_0,u_1}\}_{u_0\in \P R_0^{n-1},u_1\in \P R_1^{n-1}} \le |S|\binom{b+n-1}{n}.$$}
\end{claim}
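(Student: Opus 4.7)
\textbf{Proof proposal for Claim \ref{claim:set1}.} The plan is to bound $\crank_{\Q(\zeta_0)}\{\cU_{L_{0}(u_0,u_1)}\otimes Y_{u_0,u_1}\}$ by the $\Q(\zeta_0)$-dimension of the span of the distinct rows appearing in the vertical concatenation of all of the blocks, and then bound that dimension by a simple count of how many distinct rows there can possibly be.

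First, unfolding the definition of crank, the quantity in question equals the $\Q(\zeta_0)$-rank of the matrix $\cM$ obtained by vertically concatenating all of the blocks $\cU_{L_{0}(u_0,u_1)}\otimes Y_{u_0,u_1}$ as $(u_0,u_1)$ ranges over $\P R_0^{n-1}\times \P R_1^{n-1}$ (they share a common column index set, namely pairs consisting of a monomial in $x=(x_1,\hdots,x_n)$ of degree less than $p_0^\ell$ in each variable and a point in $R_1^n$). This rank equals the $\Q(\zeta_0)$-dimension of the row span of $\cM$, which in turn is bounded above by the number of distinct rows appearing in $\cM$.

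Next I would explicitly identify the rows. By the definition of the Kronecker product, the rows of $\cU_{L_{0}(u_0,u_1)}\otimes Y_{u_0,u_1}$ are indexed by triples $(x,\balpha,y)$ with $x\in L_{0}(u_0,u_1)$, $\text{wt}(\balpha)<b$, and $y\in L_{1}(u_0,u_1)$, and the corresponding row vector is $U_{p_0^\ell}^{(\balpha)}(\zeta_0^x)\otimes \Ind_y$; crucially, this vector depends only on $(x,\balpha,y)$ and not on $(u_0,u_1)$ directly. The key geometric input is the CRT product decomposition $L(u_0,u_1)=L_{0}(u_0,u_1)\times L_{1}(u_0,u_1)\subseteq S$ (Fact~\ref{fact:remainder}), so every pair $(x,y)$ arising in a row of any block corresponds to a point of $S$. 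Consequently the row span of $\cM$ is contained in
$$
\text{span}_{\Q(\zeta_0)}\bigl\{\, U_{p_0^\ell}^{(\balpha)}(\zeta_0^x)\otimes \Ind_y \ :\ (x,y)\in S,\ \text{wt}(\balpha)<b\,\bigr\},
$$
which has at most $|S|\cdot\binom{b+n-1}{n}$ generators, since $|\{\balpha\in \Z_{\ge 0}^n : \text{wt}(\balpha)<b\}|=\binom{b+n-1}{n}$. This gives the claimed bound.

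I do not anticipate any real obstacle: the content is a counting/spanning argument and does not invoke the earlier rank/crank machinery (Lemmas~\ref{lem:crankMatMult} and~\ref{lem:crankTensorProduct}), which are instead saved for the matching lower bound on the same crank. The only subtle point is to observe that the CRT product decomposition collapses the apparent double indexing by $(u_0,u_1)$ and by $(x,y)$ into a single indexing by points of $S$, which is exactly what makes the count $|S|\binom{b+n-1}{n}$ come out cleanly.
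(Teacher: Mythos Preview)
Your proposal is correct and follows essentially the same approach as the paper: identify the rows of the concatenated matrix as $U_{p_0^\ell}^{(\balpha)}(\zeta_0^x)\otimes \Ind_y$ with $(x,y)\in L_0(u_0,u_1)\times L_1(u_0,u_1)\subseteq S$ and $\text{wt}(\balpha)<b$, and bound the rank by the number of such triples. The paper's proof is terser but makes the same row--column rank switch (noting the entries lie in the field $\Q(\zeta_0)$) and the same count; your version is just more explicit about the CRT step and the spanning argument.
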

\begin{proof}
As we are working with matrices whose entries are in a field, row and column ranks of matrices are identical. The rows of $\cU_{L_{0}(u_0,u_1)}\otimes Y_{u_0,u_1}$ are the vectors $U_{p_0^\ell}^{(\balpha)}(\zeta_0^x)\otimes \Ind_y$ for $x\in~L_0(u_0,u_1)$, $\text{wt}(\balpha)<b$ and $y\in L_1(u_0,u_1)$. Hence there are at most $|S|\binom{b+n-1}{n}$ distinct rows in the set of $\cU_{L_{0}(u_0,u_1)}\otimes Y_{u_0,u_1}$ for $u_0\in \P R_0^{n-1},u_1\in \P R_1^{n-1}$.
\end{proof}

We now use Lemmas \ref{lem:crankMatMult} and \ref{lem:quoRank} and Corollary~\ref{lem:decoding} to prove the following claim. For convenience we let $R_0(\ell)=\Z/p_0^\ell\Z$ and let 
$$J=\{(u',u_0)\in \P R_0(\ell)^{n-1}\times \P R_0^{n-1}| u' \text{ (mod }p_0^{k_0}\text{)}=u_0\}.$$

\begin{claim}\label{claim:set2}
{\em $$\crank_{\F_{p_0}} \{M_{p_0^\ell,n}(u')\otimes Y_{u_0,u_1}\}_{(u',u_0,u_1)\in J\times \P R_1^{n-1}}\le \crank_{\Q(\zeta_0)} \{\cU_{L_{0}(u_0,u_1)}\otimes Y_{u_0,u_1}\}_{u_0\in \P R_0^{n-1},u_1\in \P R_1^{n-1}}.$$}
\end{claim}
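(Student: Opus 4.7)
The plan is to realize each row $M_{p_0^\ell,n}(u')\otimes Y_{u_0,u_1}$ as the image under $\psi_{p_0^{k_0}}$ of an appropriate $\Q(\zeta_0)[z]$-linear combination of rows of $\cU_{L_0(u_0,u_1)}\otimes Y_{u_0,u_1}$, and then chain the inequalities supplied by Lemma~\ref{lem:crankMatMult} and Lemma~\ref{lem:quoRank}.

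For each triple $(u',u_0,u_1)\in J\times\P R_1^{n-1}$, I would apply Corollary~\ref{lem:decoding} to the line $L_0(u_0,u_1)\subset R_0^n$ (which has direction $u_0$ with $u'\equiv u_0\pmod{p_0^{k_0}}$) using the uniform weight $\pi\equiv b$. Since $L_0(u_0,u_1)$ has $p_0^{k_0}$ points and $p_0^{k_0}\cdot b=p_0^\ell$ by \eqref{eq:b2}, the hypothesis is satisfied. The output is a row vector $C_{u',u_0,u_1}$ with entries in $\Q(\zeta_0)[z]$ such that
$$\psi_{p_0^{k_0}}\!\left(C_{u',u_0,u_1}\,\cU_{L_0(u_0,u_1)}\right)=M_{p_0^\ell,n}(u').$$
Setting $c(u_0,u_1)=|L_1(u_0,u_1)|$ and $H_{u',u_0,u_1}=C_{u',u_0,u_1}\otimes I_{c(u_0,u_1)}$, Fact~\ref{fact:multOfKronecker} gives
$$H_{u',u_0,u_1}\cdot\!\left(\cU_{L_0(u_0,u_1)}\otimes Y_{u_0,u_1}\right)=\left(C_{u',u_0,u_1}\cU_{L_0(u_0,u_1)}\right)\otimes Y_{u_0,u_1}.$$
Since $Y_{u_0,u_1}$ has integer $0/1$ entries, $\psi_{p_0^{k_0}}$ acts entrywise on the tensor product, producing $M_{p_0^\ell,n}(u')\otimes Y_{u_0,u_1}$ on the right.

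With these multipliers in place, I would chain two inequalities. Define the constant-in-$u'$ family $A_{u',u_0,u_1}:=\cU_{L_0(u_0,u_1)}\otimes Y_{u_0,u_1}$ for $(u',u_0,u_1)\in J\times\P R_1^{n-1}$. Concatenating duplicate column blocks does not change the $\Q(\zeta_0)$-crank, so
$$\crank_{\Q(\zeta_0)}\{A_{u',u_0,u_1}\}=\crank_{\Q(\zeta_0)}\{\cU_{L_0(u_0,u_1)}\otimes Y_{u_0,u_1}\}_{u_0\in\P R_0^{n-1},u_1\in\P R_1^{n-1}}.$$
Lemma~\ref{lem:crankMatMult} with multipliers $H_{u',u_0,u_1}$ then yields
$$\crank_{\Q(\zeta_0)}\{A_{u',u_0,u_1}\}\ \ge\ \crank_{\Q(\zeta_0)}\{H_{u',u_0,u_1}A_{u',u_0,u_1}\},$$
and since the products $H_{u',u_0,u_1}A_{u',u_0,u_1}$ have entries in $T_\ell^{k_0}$, Lemma~\ref{lem:quoRank} (applied to the concatenation of the family) gives
$$\crank_{\Q(\zeta_0)}\{H_{u',u_0,u_1}A_{u',u_0,u_1}\}\ \ge\ \crank_{\F_{p_0}}\{M_{p_0^\ell,n}(u')\otimes Y_{u_0,u_1}\}_{(u',u_0,u_1)\in J\times\P R_1^{n-1}}.$$
Combining these is precisely the claimed bound.

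The one point that requires care is verifying that the products $C_{u',u_0,u_1}\,\cU_{L_0(u_0,u_1)}$, whose entries live a priori in $\Q(\zeta_0)[z]$, in fact represent elements of $T_\ell^{k_0}=\Z(\zeta_0)[z]/\langle z^{p_0^\ell}-1\rangle$ after reduction modulo $z^{p_0^\ell}-1$; only then is $\psi_{p_0^{k_0}}$ defined on them and Lemma~\ref{lem:quoRank} applicable. This is guaranteed by Corollary~\ref{lem:decoding}, which asserts that the decoding lands in the domain of $\psi_{p_0^{k_0}}$. Everything else is a direct assembly of the matrix-multiplication, tensor-product, and quotient-map results already established in Sections~\ref{sec:PreDD} and \ref{sec:PreAr}.
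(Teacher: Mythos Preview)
Your proposal is correct and follows essentially the same route as the paper's proof: invoke Corollary~\ref{lem:decoding} with the constant weight $\pi\equiv b$ to produce the decoding rows, tensor them with the identity on $L_1(u_0,u_1)$, and then apply Lemma~\ref{lem:crankMatMult} followed by Lemma~\ref{lem:quoRank}. Your explicit reindexing of the family over $(u',u_0,u_1)\in J\times\P R_1^{n-1}$ (with duplicates) and the remark about where $\psi_{p_0^{k_0}}$ is defined are welcome clarifications, but the argument is otherwise identical to the paper's.
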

\begin{proof}
Let $\pi_{u_0,u_1}:L_0(u_0,u_1)\rightarrow \Z_{\ge 0}$ for $u_0\in \P R_0^{n-1},u_1\in \P R_1^{n-1}$ be a family of functions which takes the constant value $b$ everywhere. Using Corollary~\ref{lem:decoding} for the line $L_0(u_0,u_1)$ and function $\pi_{u_0,u_1}$, for any $u' \in \P R_0(\ell)^{n-1}$ such that $(u',u_0)\in J$ we can construct row vectors $C_{u'}$ such that 
$$\psi_{p_0^{k_0}}(C_{u'}\cdot \cU_{L_{0}(u_0,u_1)})=M_{p_0^\ell,n}(u').$$ 

Now, Lemma~\ref{fact:multOfKronecker} implies,

\begin{align*}\label{eq:long}
\psi_{p_0^{k_0}}((C_{u'}\otimes I_{L_1(u_0,u_1)})\cdot (\cU_{L_{0}(u_0,u_1)}\otimes Y_{u_0,u_1}))&= \psi_{p_0^{k_0}}(C_{u'}\cdot \cU_{L_{0}(u_0,u_1)})\otimes Y_{u_0,u_1}\\
&= M_{p_0^{\ell},n}(u')\otimes Y_{u_0,u_1} \numberthis,
\end{align*}
where $I_{L_1(u_0,u_1)}$ is the identity matrix of size $|L_1(u_0,u_1)|=N_1$.

Applying Lemma~\ref{lem:crankMatMult} implies that
$$\crank_{\Q(\zeta_0)} \{\cU_{L_{0}(u_0,u_1)}\otimes Y_{u_0,u_1}\}_{(u_0,u_1)\in \P R_0^{n-1}\times \P R_1^{n-1}}$$
is larger than 
$$\crank_{\Q(\zeta_0)} \{(C_{u'}\otimes I_{L_1(u_0,u_1)})\cdot (\cU_{L_{0}(u_0,u_1)}\otimes Y_{u_0,u_1})\}_{(u',u_0,u_1)\in J\times \P R_1^{n-1}}.$$

Applying Lemma~\ref{lem:quoRank} using \eqref{eq:long} implies that 

$$\crank_{\Q(\zeta_0)} \{(C_{u'}\cdot \cU_{L_{0}(u_0,u_1)})\otimes Y_{u_0,u_1}\}_{(u',u_0,u_1)\in J\times \P R_1^{n-1}} $$
is greater than
$$\crank_{\F_{p_0}} \{M_{p_0^{\ell},n}(u')\otimes Y_{u_0,u_1}\}_{(u',u_0,u_1)\in J\times \P R_1^{n-1}}. $$

Combining these two inequalities using Lemma~\ref{fact:multOfKronecker} completes the proof.
\end{proof}

We want to apply Lemma~\ref{lem:crankTensorProduct} on $\crank_{\F_{p_0}} \{M_{p_0^{\ell},n}(u')\otimes Y_{u_0,u_1}\}_{(u',u_0,u_1)\in J\times \P R_1^{n-1}}$. To that end, we need the following two claims.

\begin{claim}
For a given $u_0 \in \P R_0^{n-1}$,
{\em $$\crank_{\F_{p_0}} \{Y_{u_0,u_1}\}_{u_1\in \P R_1^{n-1}} \ge N_1^n\prod\limits_{i=1}^r (2(k_i+\lceil \log_{p_i}(n)\rceil))^{-n}.$$}
\end{claim}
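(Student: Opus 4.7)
The plan is to recognize the crank on the left hand side as the size of a specific Kakeya set in $R_1^n = (\Z/N_1\Z)^n$, and then invoke the inductive hypothesis of Theorem~\ref{main} applied to the smaller modulus $N_1$, which has exactly $r$ distinct prime factors.

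First, I would unpack the definition of crank. Since every $Y_{u_0,u_1}$ has its columns indexed by the common set $R_1^n$, concatenating these matrices along their columns produces a single matrix with $N_1^n$ columns whose rows are the indicator vectors $\Ind_y$ as $y$ ranges (with possible repetition) over the union of the lines $L_1(u_0, u_1)$ for $u_1 \in \P R_1^{n-1}$. Distinct standard basis vectors of $\F_{p_0}^{R_1^n}$ are $\F_{p_0}$-linearly independent, so the $\F_{p_0}$-rank of the stacked matrix equals the cardinality of the set
$$T_{u_0} := \bigcup_{u_1 \in \P R_1^{n-1}} L_1(u_0, u_1) \subseteq R_1^n.$$

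Next, I would observe that $T_{u_0}$ is a Kakeya set in $R_1^n$: by construction, for every direction $u_1 \in \P R_1^{n-1}$, the set $L_1(u_0, u_1)$ is a line in direction $u_1$ and is contained in $T_{u_0}$, so $T_{u_0}$ contains a line in every direction in $\P R_1^{n-1}$.

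Finally, since $N_1 = p_1^{k_1} \cdots p_r^{k_r}$ has $r$ distinct prime factors, the inductive hypothesis of Theorem~\ref{main} applied to $T_{u_0}$ yields
$$|T_{u_0}| \ge N_1^n \prod_{i=1}^{r} (2(k_i + \lceil \log_{p_i}(n) \rceil))^{-n},$$
which is exactly the claimed bound. There is no real obstacle here; the one bookkeeping point is simply unwrapping the crank convention (concatenation along columns, i.e.\ vertical stacking of matrices sharing the same column index set) so that the reduction to a Kakeya set in $R_1^n$ becomes transparent.
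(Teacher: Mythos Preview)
Your proposal is correct and follows essentially the same approach as the paper: identify the crank as the size of $\bigcup_{u_1 \in \P R_1^{n-1}} L_1(u_0,u_1)$ (since the rows are standard basis vectors $\Ind_y$ over the field $\F_{p_0}$), observe this union is a Kakeya set in $R_1^n$, and apply the inductive hypothesis for $N_1$.
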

\begin{proof}
As we are working with a matrix with entries in $\F_{p_0}$ the row space and column space of the matrix formed by concatenating $\{Y_{u_0,u_1}\}_{u_1\in \P R_1^{n-1}}$ along the columns will be equal. The row space of $Y_{u_0,u_1}$ is the space spanned by $\Ind_y$ for $y\in L_1(u_0,u_1)$. If we consider all the rows from matrices in the set $\{Y_{u_0,u_1}\}_{d_1\in R_1^n}$ we will obtain vectors $\Ind_y$ for all $y \in \bigcup_{d_1\in  \P R_1^{n-1}} L_1(u_0,u_1)$. 
This means $\crank_{\F_{p_0}} \{Y_{u_0,u_1}\}_{u_1\in \P R_1^{n-1}}$ is exactly equal to the size of $\bigcup_{u_1\in \P R_1^{n-1}} L_1(u_0,u_1)$ but that is a Kakeya set in $(\Z/N_1\Z)^n$. Finally, applying the induction hypothesis we are done.
\end{proof}

\begin{claim}
{\em$$\crank_{\F_{p_0}} \{M_{p_0^{\ell},n}(u')\}_{u'\in \P R_0(\ell)^{n-1}} \ge \binom{p_0^\ell \ell^{-1}+n}{n}.$$}
\end{claim}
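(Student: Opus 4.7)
The plan is to deduce this claim directly from two results already established earlier in the paper, namely Lemma~\ref{lem:1rank} and Lemma~\ref{lem:resRk}. There is no real obstacle here; the claim is essentially a repackaging of the rank bound on $M_{p_0^{\ell},n}$ into the language of crank.

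First, I would unwind the definition of crank. Each $M_{p_0^{\ell},n}(u')$ is a single row of the matrix $M_{p_0^{\ell},n}$, so all the matrices in the family $\{M_{p_0^{\ell},n}(u')\}_{u'\in \P R_0(\ell)^{n-1}}$ have the same number of columns (namely $p_0^{\ell n}$). Concatenating them along their columns, as per the definition of crank, simply produces the submatrix $M_{p_0^{\ell},n}(\P R_0(\ell)^{n-1})$ obtained by keeping only the rows of $M_{p_0^{\ell},n}$ indexed by projective directions. Thus
\[
\crank_{\F_{p_0}} \{M_{p_0^{\ell},n}(u')\}_{u'\in \P R_0(\ell)^{n-1}} = \rank_{\F_{p_0}} M_{p_0^{\ell},n}(\P R_0(\ell)^{n-1}).
\]

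Next, I would invoke Lemma~\ref{lem:resRk} applied with $p = p_0$, which states that restricting $M_{p_0^{\ell},n}$ to the rows indexed by $\P R_0(\ell)^{n-1} = \P (\Z/p_0^{\ell}\Z)^{n-1}$ does not change the $\F_{p_0}$-rank. Hence
\[
\rank_{\F_{p_0}} M_{p_0^{\ell},n}(\P R_0(\ell)^{n-1}) = \rank_{\F_{p_0}} M_{p_0^{\ell},n}.
\]
Finally, Lemma~\ref{lem:1rank} (again with $p = p_0$) gives the lower bound $\rank_{\F_{p_0}} M_{p_0^{\ell},n} \ge \lceil \binom{p_0^{\ell}\ell^{-1} + n}{n}\rceil \ge \binom{p_0^{\ell}\ell^{-1} + n}{n}$, and chaining the three steps completes the proof.
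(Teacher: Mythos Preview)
Your proof is correct and follows exactly the same approach as the paper, which simply says ``This follows from Lemmas~\ref{lem:1rank} and~\ref{lem:resRk}.'' You have merely spelled out the unwinding of the definition of crank that makes this citation work.
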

\begin{proof}
This follows from Lemmas \ref{lem:1rank} and \ref{lem:resRk}.
\end{proof}

Now applying Lemma~\ref{lem:crankTensorProduct} and the above two claims we see that
$$\crank_{\F_{p_0}} \{M_{p_0^{\ell},n}(u')\otimes Y_{u_0,u_1}\}_{(u',u_0,u_1)\in J}\ge \left(N_1^n\prod\limits_{i=1}^r (2(k_i+\lceil \log_{p_i}(n)\rceil))^{-n}\right)\binom{p_0^\ell \ell^{-1}+n}{n}. $$
Applying the above equation and Claims \ref{claim:set1} and \ref{claim:set2} we obtain the following bound on $|S|$.
$$|S|\binom{b+n-1}{n}\ge \left(N_1^n\prod\limits_{i=1}^r (2(k_i+\lceil \log_{p_i}(n)\rceil))^{-n}\right)\binom{p_0^\ell \ell^{-1}+n}{n}.$$
Recall, $\ell=k_0+\lceil \log_{p_0}(n)\rceil$ and $b=p_0^{\lceil \log_{p_0}(n)\rceil}\ge n$. Substituting these in the inequality above we get.
\begin{align*}
    \left(N_1^n\prod\limits_{i=1}^r (2(k_i+\lceil \log_{p_i}(n)\rceil))^{-n}\right)^{-1}|S|  &\ge  \prod\limits_{i=1}^n \frac{p_0^{k_0} p_0^{\lceil \log_{p_0}(n)\rceil} (k_0+\lceil \log_{p_0}(n)\rceil)^{-1} +i}{p_0^{\lceil \log_{p_0}(n)\rceil} +i}\\
    &\ge \prod\limits_{i=1}^n \frac{ p_0^{k_0}(k_0+\lceil \log_{p_0}(n)\rceil)^{-1} +i p_0^{-\lceil \log_{p_0}(n)\rceil}}{1+i  p_0^{-\lceil \log_{p_0}(n)\rceil}}\\
    &\ge \frac{p_0^{k_0n}}{(k_0+\lceil \log_{p_0}(n)\rceil)^{n}} \prod\limits_{i=1}^n\left(1+i p_0^{-\lceil \log_{p_0}(n)\rceil} \right)^{-1}.
\end{align*}
For $i\le n$ we have $\left(1+i p_0^{-\lceil \log_{p_0}(n)\rceil} \right)^{-1}\le 2$. This observation with the inequality above completes the proof.

Note for $p_0>n$ we have $\left(1+i p_0^{-\lceil \log_{p_0}(n)\rceil} \right)^{-1}\le 1+n/p_0$ and $\lceil\log_{p_0}(n)\rceil=1$. This with the inequality above and suitably modifying the induction hypothesis will give us the second half of this theorem.
\end{proof}

\section{Constructing small Kakeya sets}\label{sec:const}
\begin{repthm}{thm:con}
Let $s,n\in \N$, $p$ be a prime and $k=(p^{s+1}-1)/(p-1)$. There exists a Kakeya set $S$ in $(\Z/p^k\Z)^n$ such that,
$$|S| \le \sum\limits_{i=1}^n \frac{p^{ki}}{k^{i-1}(1-p^{-1})^{i-1}}\le \frac{p^{kn}}{k^{n-1}(1-p^{-1})^{n}}.$$
\end{repthm}
\begin{proof}
The construction will be inductive. For $n=1$ it $S$ is the whole set.
For a general $n$ we first construct a set which has lines with directions $u=\{1,u_2,\hdots,u_n\}\in \P (\Z/p^k\Z)^{n-1}$. We will need the following claim.
\begin{claim}
There exists a function $g: \Z/p^k\Z\rightarrow \Z/p^k\Z$ such that for any fixed $t\in \Z/p^k\Z$ the function $x\rightarrow tx-g(x),x\in \Z/p^k\Z$ has an image of size at most $p^k/(k(1-p^{-1}))$.
\end{claim}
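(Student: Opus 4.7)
The bound is very constraining: setting $t = 0$ in the claim already forces $|g(\Z/p^k\Z)| \leq p^k/(k(1-1/p)) = p^{k+1}/(p^{s+1}-1)$, so $g$ itself must be highly non-injective, with average fiber size $k(1-1/p) = (p^{s+1}-1)/p$. For general $t$ the map $x \mapsto tx - g(x)$ must be similarly concentrated, uniformly in $t$. Note also that the bound is essentially tight against a Cauchy-Schwarz / fiber-counting argument: any $g$ satisfying the $p$-adic Lipschitz property $v_p(g(x) - g(y)) \geq v_p(x-y)$ saturates the counting $\sum_{d \neq 0} p^k p^{v_p(d)} = k(p-1) p^{2k-1}$, and the claim asks that the corresponding image bound be achieved for \emph{every} $t$, not merely on average.

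\textbf{Construction.} My plan is to build $g$ explicitly via $p$-adic digit manipulations, tailored to the decomposition $k = 1 + p + p^2 + \cdots + p^s$. Writing each $x \in \Z/p^k\Z$ in base $p$ as $x = \sum_{i=0}^{k-1} a_i(x)\, p^i$, I would partition the digit positions $\{0, 1, \ldots, k-1\}$ into $s+1$ blocks $B_0, B_1, \ldots, B_s$ with $|B_j| = p^j$, and define $g(x) = \sum_{j=0}^s g_j(x)$, where $g_j$ depends only on the digits of $x$ in positions $B_j$ and outputs at a scale chosen to match $B_j$. The block-size-to-scale matching is designed so that for every $t$, the contribution from some $g_j$ aligns with multiplication by $t$ to produce many collisions in $x \mapsto tx - g(x)$; in particular, since each $g_j$ depends only on digits in $B_j$, the composite $g$ has total image of size at most $p^k/(k(1-1/p))$, taking care of the $t = 0$ case by design.

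\textbf{Verification.} To verify the image bound for all $t$, I would fix $t$, let $v = v_p(t)$, and exhibit a subgroup $H_t \leq \Z/p^k\Z$ of size at least $k(1-1/p)$ on whose cosets $x \mapsto tx - g(x)$ is constant. The group $H_t$ would be built from the block $B_j$ whose scale matches $v$: adding an element $h \in H_t$ shifts only digits of $x$ in positions $B_j$, and by construction the corresponding change in $g(x)$ exactly cancels $t \cdot h$. Pigeonhole then gives $|A(t)| \leq p^k / |H_t| \leq p^k/(k(1-1/p))$.

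\textbf{Main obstacle.} The delicate point is ensuring the construction works uniformly over every $v \in \{0, 1, \ldots, k\}$: each block $B_j$ can ``serve'' only certain valuations, and the block-to-scale assignment must be chosen so every $t$ falls into some block's range. The constructions of Hickman-Wright and Fraser cited in the paper handle the analogous problem with weaker constants; the improvement here should come from matching the block sizes $p^0, p^1, \ldots, p^s$ exactly to the decomposition $k = 1 + p + \cdots + p^s$, so that no valuation is ``underserved'' and the sharp constant $(k(1-1/p))^{-1}$ is realized.
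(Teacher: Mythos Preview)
Your subgroup plan cannot work as stated. Subgroups of $\Z/p^k\Z$ are totally ordered by inclusion, so if for every $t$ you had a subgroup $H_t$ of order at least $p^s$ (which is what $|H_t|\ge k(1-1/p)=p^s-1/p$ forces, since subgroup orders are powers of $p$) on whose cosets $x\mapsto tx-g(x)$ is constant, then $H_*=\bigcap_t H_t$ would equal some $H_{t_0}$ and hence contain a nonzero element $h$. For that $h$ and every $t$ we would have $g(x+h)-g(x)=th$; in particular $t_1h=t_2h$ for all $t_1,t_2$, forcing $h=0$, a contradiction (for $s\ge1$; the case $s=0$ is trivial). So no choice of $g$ admits such a system of subgroups. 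Relatedly, your $t=0$ remark is not a valid deduction: that each $g_j$ depends only on the digits in $B_j$ says nothing about the size of the image of $g=\sum_j g_j$.

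The paper's construction avoids this obstruction by \emph{not} asking for constancy on cosets. It takes $g$ to be ``digit-linear'', $g\bigl(\sum_j a_j p^j\bigr)=\sum_j a_j c_j p^j$ for a carefully built sequence $c_0,\ldots,c_{k-1}\in\{0,\ldots,p^s-1\}$, so that one has the exact identity $tu-g(u)=\sum_j (t-c_j)a_j p^j$ in $\Z/p^k\Z$. The sequence $(c_j)$ is engineered so that for every residue $t'=t\bmod p^s$ there is an index $\beta$ with $c_\beta=t'$ and $c_{\beta+i}\equiv t'\pmod{p^{s-i}}$ for $i<s$; then for $u,v$ agreeing in their first $\beta$ digits one gets $tu-g(u)\equiv tv-g(v)\pmod{p^{s+\beta}}$, which bounds the image by $p^\beta\cdot p^{k-s-\beta}=p^{k-s}\le p^k/(k(1-1/p))$. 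Note the fibers ``same first $\beta$ digits'' are \emph{not} cosets of a subgroup, and the conclusion is congruence modulo $p^{s+\beta}$ rather than equality; both features are essential to evade the obstruction above. Your block sizes $1,p,\ldots,p^s$ echo the count $k=1+p+\cdots+p^s$ that controls the length of the $(c_j)$ sequence, but the actual mechanism is a sliding window in that sequence, not a fixed block partition.
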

\begin{proof}
We write an element $u\in \Z/p^k\Z$ in its $p$-ary expansion $a_0+a_1p+\hdots a_{k-1}p^{k-1}$ where $a_i\in \{0,\hdots,p-1\}$. $g$ will be of the form,
$$g(a_0+a_1p+\hdots a_{k-1}p^{k-1})=\sum\limits_{j=0}^{k-1} a_jc_jp^j,$$
where $c_i \in \{0,\hdots, p^s-1\}$. We write the number $c_i$ in its $p$-ary form $c_i(0)+c_i(1)p+\hdots c_i(s-1)p^{s-1}, c_i(j)\in \{0,\hdots, p-1\}$ and represent $c_i$ with the tuple $(c_i(0),\hdots,c_i(s-1))$. 

We will set $c_0,c_1,\hdots,c_{p^k-1}$ iteratively. $c_0=p^s-1=(p-1,\hdots,p-1)$. Once we have set $c_i$, if $c_i(s-1)>0$ we set $c_{i+1}=(c_i(0),\hdots,c_i(s-1)-1)$. If $c_i(s-1)=0$ and $c_i\ne 0$ then let $\alpha$ be the largest index such that $c_i(\alpha)=0$ and $c_{i}(\alpha-1)\ne 0$. We then set $c_{i+1}=\hdots=c_{i+s-\alpha}=c_i$ and $c_{i+s+1-\alpha}=(c_i(0),\hdots,c_i(\alpha-1)-1,p-1,\hdots,p-1)$. If $c_i=0$ then all the later $c_j,j>i$ are also set to $0$.  We note in this sequence of $c_i$s each number in $\{1,\hdots, p^s-1\}$ only appears at most as many times as the number of trailing zeros plus 1 in its $p$-ary expansion. This means the number of zero equals,
$$k-\sum\limits_{i=0}^{s-1} (i+1)p^{s-i-1}(p-1)= \frac{p^{s+1}-1}{p-1}-\sum\limits_{i=0}^{s-1} (i+1)p^{s-i-1}(p-1)=s+1.$$

The following property of $c_i$ quickly follows by the construction.
\begin{stat}\label{stat:Ex}
Given a fixed $t'\in \{0,\hdots,p^s-1\}$, let $\beta$ be the smallest index such that $c_\beta=t'$. This construction guarantees that {\em $c_{\beta+i}-t'=0\text{(mod }p^{s-i}\text{)}$} for $i=\{0,\hdots,s-1\}$.

\end{stat}

For any fixed $t$ consider $tu-g(u)-tv+g(v)$ where $u=a_0+a_1p+\hdots a_{k-1}p^{k-1}$ and $v=b_0+b_1p+\hdots b_{k-1}p^{k-1}$,
\begin{align}\label{eq:ex}
    tu-g(u)-tv+g(v)=\sum\limits_{j=0}^{k-1} (t-c_j)(a_j-b_j)p^j.
\end{align}
Let $t'=t\text{(mod }p^s\text{)}$ and $\beta$ be the smallest index such that $c_\beta=t'$. If $u_i=v_i$ for $i< \beta$ then using Statement \ref{stat:Ex} on \eqref{eq:ex} we have,
$$\left(tu-g(u)-tv+g(v) \right)\text{(mod }p^{s+\beta}\text{)}=0.$$
This means after fixing the first $\beta$ coordinates of $u$, $tu-g(u)$ can take at most $p^{k-s-\beta}$ values. This means the function $tu-g(u)$ can take at most $p^{k-s}\le p^k/(k(1-p^{-1}))$ many values.
\end{proof}
 The set 
$$S_n=\{(t,tu_2-g(u_2),\hdots,tu_n-g(u_n))|t,u_2,\hdots,u_n\in \Z/p^k\Z\}$$
contains a line in every direction $\{1,u_2,\hdots,u_n\}\in \P (\Z/p^k\Z)^{n-1}$. By the claim above, for a fixed $t$ the function $u\rightarrow tu-g(u)$ will have an image of size at most $p^k/(k(1-p^{-1}))$. This ensures $S_n$ is of size at most $p^{kn}/(k^n(1-p^{-1})^n)$. To add points with lines in other directions $(0,u_2,\hdots,u_n)\in \P (\Z/p^k\Z)^{n-1}$ we simply need to add a Kakeya set in $(\Z/p^k\Z)^{n-1}$ which we can do using the induction hypothesis, completing the construction.
\end{proof}
\begin{comm}
Let $q$ be a prime power, $s,n\in \N$ and $k=q^s-1/(q-1)$. The construction above can be adapted to find Kakeya sets in $(\F_q[x]/\langle x^k\rangle)^n$ of size,
$$\sum\limits_{i=1}^n \frac{q^{ki}}{k^{i-1}(1-q^{-1})^{i-1}}\le \frac{q^{kn}}{k^{n-1}(1-q^{-1})^{n}}.$$
\end{comm}


\appendix
\section{Proof of Lemma \ref{lem:1rank}}\label{ap:rk}

We slightly sharpen the analysis of \cite{arsovski2021padic} here. We first show $M_{p^k,1}$ has an explicit decomposition as a product of lower and upper triangular matrices.

\begin{lem}[Lemma 5 in \cite{arsovski2021padic}]\label{lem:diag}
Let $V_m$ for $m\in \N$ be an $m\times m$ matrix whose row and columns are labelled by elements in $\{0,\hdots,m-1\}$ such that its $i,j$th entry is $z^{ij}\in \Z[z]$. 

In this setting, there exists a lower triangular matrix $L_{m}$ over $\Z[z]$ with ones on the diagonal such that its inverse is also lower triangular with entries in $\Z[z]$ with ones on the diagonal, and an upper triangular matrix $D_{m}$ over $\Z[z]$ whose rows and columns are indexed by points in $\{0,\hdots,m-1\}$ such that the $j$th diagonal entry for $j\in \{0,\hdots,m-1\}$ equals

 \begin{align*}
 D_{m}(j,j)=      \prod_{i=0}^{j-1} (z^j-z^i)
 \end{align*}
 such that $V_{m}=L_{m}D_{m}$.
\end{lem}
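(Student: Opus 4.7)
The plan is to build the factorization from a Newton basis at the nodes $z^0, z^1, \ldots, z^{m-1}$. First I set $p_j(x) = \prod_{k=0}^{j-1}(x - z^k) \in \Z[z][x]$ for $j = 0, 1, \ldots, m-1$; these are monic in $x$ of distinct degrees $< m$, so form a $\Z[z]$-basis for polynomials in $x$ of degree less than $m$. By repeated Euclidean division I expand $x^j = \sum_{k=0}^{j} c_{jk}(z)\, p_k(x)$ with $c_{jk}(z) \in \Z[z]$ and $c_{jj} = 1$. Evaluating at $x = z^i$ gives $V_m = P \cdot C$, where $P(i,j) := p_j(z^i)$ is lower triangular (since $p_j$ vanishes at $z^0, \ldots, z^{j-1}$) with diagonal $P(j,j) = \prod_{k=0}^{j-1}(z^j - z^k)$, and $C(k,j) := c_{jk}$ is upper triangular with ones on the diagonal.

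The natural candidate is then $L_m(i,j) := P(i,j)/P(j,j)$ for $i \ge j$ (and $0$ for $i < j$), with $D_m := \mathrm{diag}(P) \cdot C$, so that $V_m = L_m D_m$ and $D_m$ is upper triangular with the claimed diagonal entries. The subtle point is to show that $L_m(i,j)$ lies in $\Z[z]$ and not merely in the fraction field. Factoring $z^i - z^k = z^k(z^{i-k} - 1)$ and re-indexing the two products, the powers of $z$ cancel cleanly and one is left with
\begin{equation*}
L_m(i,j) \;=\; \frac{\prod_{r=i-j+1}^{i}(z^r - 1)}{\prod_{s=1}^{j}(z^s - 1)},
\end{equation*}
which is the Gaussian binomial coefficient $\binom{i}{j}_z$, a classical polynomial in $\Z[z]$ with nonnegative integer coefficients. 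This identification is the main content of the argument.

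Finally, that $L_m^{-1}$ is lower triangular with $\Z[z]$ entries and ones on the diagonal is automatic: writing $L_m = I + N$ with $N$ strictly lower triangular over $\Z[z]$, nilpotency ($N^m = 0$) yields $L_m^{-1} = \sum_{k=0}^{m-1}(-1)^k N^k$, a finite sum of products of matrices with $\Z[z]$ entries. The main obstacle in the whole plan is thus the $q$-binomial identification; once that is in hand, the matrix identities and the structure of $L_m^{-1}$ are formal bookkeeping.
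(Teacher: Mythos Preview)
The paper does not actually prove this lemma; immediately after the statement it simply notes ``This statement is precisely Lemma 5 in \cite{arsovski2021padic}'' and moves on. So there is no in-paper argument to compare against.

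Your proof is correct and self-contained. The Newton-basis factorization $V_m = P\cdot C$ is valid over $\Z[z]$ because each $p_k$ is monic in $x$, so the successive divisions producing the $c_{jk}$ stay in $\Z[z]$. The key step---showing $L_m(i,j)=P(i,j)/P(j,j)\in\Z[z]$---is exactly the Gaussian binomial identification you wrote down, and your computation of the $z$-power cancellation is right: both numerator and denominator carry the same factor $z^{\binom{j}{2}}$, leaving $\binom{i}{j}_z$. The nilpotent-series argument for $L_m^{-1}$ is the cleanest way to get integrality of the inverse without further work. This is essentially the classical LU decomposition of a Vandermonde matrix at geometric nodes, and it is the natural route to the statement; Arsovski's original argument proceeds along the same lines.
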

This statement is precisely Lemma 5 in \cite{arsovski2021padic}. We will also need Lucas's theorem from \cite{Lucas}.



\begin{thm}[Lucas's Theorem~\cite{Lucas}]\label{thm:luc}
Let $p$ be a prime and Given two natural numbers $a$ and $b$ with expansions $a_k p^k+\hdots+a_1p+a_0$ and  
$b_kp^k+\hdots+b_0$ in base $p$ we have,
\em{$$\binom{a}{b} \text{(mod }p\text{)}=\prod\limits_{i=0}^k\binom{a_i}{b_i} \text{(mod }p\text{)} .$$}
A particular consequence is that $\binom{a}{b} \mod p$ is non-zero if and only if every digit in base-$p$ of $b$ is at most as large as every digit in base-$p$ of $a$.
\end{thm}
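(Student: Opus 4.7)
The plan is to prove Lucas's theorem by computing the coefficient of $x^b$ in the polynomial $(1+x)^a$ in two different ways over the field $\F_p$, exploiting the base-$p$ expansion of the exponent.

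First I would observe that, by the binomial theorem, $\binom{a}{b} \pmod p$ is precisely the coefficient of $x^b$ in $(1+x)^a$ considered as an element of $\F_p[x]$. So the theorem reduces to computing this coefficient in a way that is sensitive to the base-$p$ digits of $a$. The main algebraic ingredient is the Frobenius/Freshman's dream identity $(1+x)^p \equiv 1 + x^p \pmod p$, which follows because $p \mid \binom{p}{j}$ for every $1 \le j \le p-1$. Iterating this identity gives $(1+x)^{p^i} \equiv 1 + x^{p^i} \pmod p$ for all $i \ge 0$.

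Using the expansion $a = \sum_{i=0}^{k} a_i p^i$ with $a_i \in \{0, \ldots, p-1\}$, I would then factor
$$(1+x)^a = \prod_{i=0}^k (1+x)^{a_i p^i} \equiv \prod_{i=0}^k (1+x^{p^i})^{a_i} \pmod p.$$
Expanding each factor by the ordinary binomial theorem and multiplying gives a sum over tuples $(b_0, \ldots, b_k)$ with $0 \le b_i \le a_i$:
$$\prod_{i=0}^k (1+x^{p^i})^{a_i} = \sum_{(b_0, \ldots, b_k)} \left(\prod_{i=0}^k \binom{a_i}{b_i}\right) x^{\sum_i b_i p^i}.$$

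The critical observation is that, since each $b_i \in \{0, \ldots, p-1\}$, the exponents $\sum_i b_i p^i$ appearing on the right are pairwise distinct by uniqueness of the base-$p$ representation. Hence matching the coefficient of $x^b$ on both sides, where $b = \sum_i b_i p^i$ is the unique base-$p$ expansion of $b$, produces the desired identity $\binom{a}{b} \equiv \prod_i \binom{a_i}{b_i} \pmod p$. The non-vanishing corollary then follows immediately, since for $0 \le a_i, b_i < p$ we have $\binom{a_i}{b_i} \not\equiv 0 \pmod p$ precisely when $b_i \le a_i$.

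There is no substantive obstacle here, as Lucas's theorem is classical. The only care needed is to ensure that the digit indices $b_i$ in the factorization remain confined to $\{0, \ldots, p-1\}$, so that the comparison with the unique base-$p$ expansion of $b$ is valid; this is exactly why it is essential to expand $(1+x^{p^i})^{a_i}$ (with $a_i < p$) rather than, say, $(1+x)^{a}$ directly.
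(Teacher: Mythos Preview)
Your proof is correct and is the standard generating-function argument for Lucas's theorem. The paper does not actually provide a proof of this statement; it is stated with a citation to the original reference and used as a black-box tool in the rank computation of Appendix~\ref{ap:rk}, so there is no proof in the paper to compare against.
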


\begin{replem}{lem:1rank}
The $\F_p$-rank of $M_{p^\ell,n}$ is at least 
{\em $$\text{rank}_{\F_p} M_{p^\ell,n} \ge \binom{\lceil p^\ell \ell^{-1}\rceil +n}{n}.$$}
\end{replem}

\begin{proof}
Using the previous lemma we note that $V_{p^\ell}=L_{p^\ell}D_{p^\ell}$. Under the ring map $f$ from $\Z[z]$ to $\Z[z]/\langle z^{p^\ell}-1\rangle$, $V_{p^\ell}=L_{p^\ell}D_{p^\ell}$ becomes $M_{p^\ell,1}=\overline{L}_{p^\ell}\overline{D}_{p^\ell}$ where $\overline{L}_{p^\ell}$ and $\overline{D}_{p^\ell}$ are the matrices $f(L_{p^\ell})$ and $f(D_{p^\ell})$ respectively.

We next notice that $M_{p^\ell,n}$ is $M_{p^\ell,1}$ tensored with itself $n$ times which we denote as $M_{p^\ell,n}=M_{p^\ell,1}^{\otimes n}$. Using $M_{p^\ell,1}=\overline{L}_{p^\ell}\overline{D}_{p^\ell}$ and Fact \ref{fact:multOfKronecker} we have $M_{p^\ell,n}=M_{p^\ell,1}^{\otimes n}=\overline{L}_{p^\ell}^{\otimes n}\overline{D}_{p^\ell}^{\otimes n}$. As $L_{p^\ell}$ was invertible with its inverse also having entries in $\Z[z]$ we see that $\overline{L}_{p^\ell}$ is also invertible and $\left(\overline{L}_{p^\ell}^{\otimes n}\right)^{-1}M_{p^\ell,n}= \overline{D}_{p^\ell}^{\otimes n}$. Using Lemma \ref{lem:crankMatMult} we have that,
$$\rank_{\F_p} M_{p^\ell,n}\ge \rank_{\F_p} \overline{D}_{p^\ell}^{\otimes n}.$$
As $\overline{D}_{p^\ell}$ is upper triangular so will $\overline{D}_{p^\ell}^{\otimes n}$ be. Therefore, to lower bound the rank of $\overline{D}_{p^\ell}^{\otimes n}$ we can lower bound the number of non-zero diagonal elements. 

The diagonal elements in $\overline{D}_{p^\ell}^{\otimes n}$ correspond to the product of diagonal elements chosen from $n$ copies of $\overline{D}_{p^\ell}$. Recall, the rows and columns of $\overline{D}_{p^\ell}$ are labelled by $j\in \{0,\hdots,p^\ell-1\}$ with
\begin{align*}
 D_{m}(j,j)=      \prod_{i=0}^{j-1} (z^j-z^i)
 \end{align*}
Setting $z-1=w$ we note that $\F_p[z]/\langle z^{p^\ell}-1\rangle$ is isomorphic to $\F_p[w]/\langle w^{p^\ell}\rangle$. $D_{p^\ell}(j,j)$ can now be written as
\begin{align*}
 D_{m}(j,j)=     (1+w)^{j(j-1)/2} \prod_{i=1}^{j} ((1+w)^i-1)
 \end{align*}
Using Lucas's Theorem (Theorem \ref{thm:luc}) we see that the largest power of $t$ which divides $(1+w)^l-1$ is the same as the largest power of $p$ which divides $l$. For any $j\le p^\ell-1$ , therefore the largest power of $w$ which divides $\overline{D}_{p^\ell}(j,j)$ is at most
\begin{align*}
    \sum\limits_{t=0}^{\lfloor \log_p(j)\rfloor} \left(\left\lfloor \frac{j}{p^t} \right\rfloor-\left\lfloor \frac{j}{p^{t+1}} \right\rfloor\right)p^t&=j+\sum\limits_{t=1}^{\lfloor \log_p(j)\rfloor} \left\lfloor \frac{j}{p^t} \right\rfloor p^{t-1}(p-1)\\
    &\le j(1+\lfloor \log_p(j)\rfloor(1-1/p))\\
    &\le j(\ell -(\ell-1)/p).\numberthis \label{eq:jBd}
\end{align*}
Consider the set of tuples $(j_1,\hdots, j_n)\in \N$ such that $j_1+\hdots+j_n\le \lceil p^\ell/\ell \rceil $. Using \eqref{eq:jBd} we see that the diagonal entry in $\overline{D}_{p^\ell}^{\otimes n}$ corresponding to the tuple will be divisible by at most $w^{\lceil p^\ell/\ell \rceil(\ell -(\ell-1)/p)}$. It is easy to check that $\lceil p^\ell/\ell \rceil(\ell -(\ell-1)/p)\le p^\ell -1$ which will guarantee that the $(j_1,\hdots,j_n)$'th diagonal entry of $\overline{D}_{p^\ell}^{\otimes n}$ is non-zero. 

This gives us at least $\binom{\lceil p^\ell \ell^{-1} \rceil+n}{n}$ non-zero diagonal entries proving the desired rank bound.
\end{proof}


\bibliographystyle{amsplain}




\begin{aicauthors}
\begin{authorinfo}[pgom]
  Manik Dhar\\
  Massachusetts Institute of Technology\\
  
  Cambridge, MA, USA\\
  dmanik\imageat{}mit\imagedot{}edu \\
  \url{https://dharmanik.github.io}
\end{authorinfo}
\end{aicauthors}

\end{document}